\numberwithin{equation}{section}
\definecolor{dullmagenta}{rgb}{0.4,0,0.4}   
\definecolor{darkblue}{rgb}{0,0,0.4}
\def\dblue#1{\textcolor[rgb]{0,0,0.7}{#1}}
\newcommand{\ii}{{\rm i}}
\newcommand{\ee}{{\rm e}}
\newcommand{\dd}{{\rm d}}
\newcommand{\CC}{{\mathbb C}}
\newcommand{\RR}{{\mathbb R}}
\newcommand{\DD}{{\mathbb D}}
\def\conn{\mathop{\rm conn}}
\def\clos{\mathop{\rm clos}}
\def\inte{\mathop{\rm int}}
\newtheorem{thm}{Theorem}[section] 
\newtheorem{lemma}[thm]{Lemma}
\newtheorem{cor}[thm]{Corollary}
\newtheorem*{thm-others}{Theorem}
\newenvironment{customthm}[1]
  {\innercustomthm}
  {\endinnercustomthm}
\theoremstyle{remark}
\def\C{{\mathbf{C}}}
\def\R{{\mathbb R}}
\renewcommand{\descriptionlabel}[1]%
         {\dblue{#1:}\\}
\title{Topology of quadrature domains}
\author{Seung-Yeop~Lee}
\address{Department of Mathematics, California Institute of Technology,\newline Pasadena, CA 91125, USA}
\email{duxlee@caltech.edu}
\thanks{The first author was supported by Sherman Fairchild Senior Research Fellowship.\newline \indent The second author was supported by NSF grant no. 1101735.}
\author{Nikolai~G.~Makarov}
\address{Department of Mathematics, California Institute of Technology,\newline Pasadena, CA 91125, USA}
\email{makarov@caltech.edu}
\begin{document}

\maketitle

The {\it mean value property} for analytic functions states that  if $\Omega$ is a disc,
$$\Omega=B(a,\rho):=\{|z-a|<\rho\},$$then for all bounded analytic functions $f$ in $\Omega$ we have
\begin{equation}\label{01}
\int_\Omega f\,\dd A=\pi \rho^2 f(a),
\end{equation}
where $\dd A$ is the area measure. Similar property holds for cardioids: if
for example
\begin{equation*}
\Omega=\{z+z^2/2:~ |z|<1\},
\end{equation*}
then
\begin{equation}\label{02}
\int_\Omega f \,\dd A= \frac{3\pi}{2} f(0)+\frac{\pi}{2} f'(0).
\end{equation}
 Formulas like \eqref{01} and \eqref{02} are called {\it quadrature identities}, and the corresponding domains of integration are called (classical) {\it quadrature domains}.
 Various classes of quadrature domains have been known for quite some time, see e.g. Neumann's papers \cite{Neumann-oval1,Neumann-oval2} from the beginning of the last century, but the systematic study began only with the work of  Davis \cite{Davis}, and  Aharonov and  Shapiro \cite{AS76}.   We refer  to the monographs \cite{Varchenko-Etingof,Gustafsson-Vasiliev, Davis, Sakai-book-QD, Shapiro92} related to this subject, and
to the survey paper \cite{Gus05} for a quick overview and further references.

 Quadrature domains appear in  several diverse areas of analysis such as  extremal problems for certain classes of analytic functions \cite{AS1,AS2,AS3,AS4}, the Schwarz reflection principle \cite{Davis}, univalent functions, zeros of harmonic polynomials \cite{Duren, poly-book}, and even operator theory  (hypernormal and subnormal operators) \cite{Putinar1,Putinar2}.  An important discovery by Richardson \cite{Richardson72} linked the theory of quadrature domains to the {\it Hele-Shaw flow} in fluid mechanics.

From the point of view of potential theory (the importance of potential theoretic approach was first advocated by Sakai \cite{Sakai1,Sakai2,Sakai3,Sakai-book-QD}), quadrature domains are related to the theory of partial balayage \cite{GusA,GusB,GusC,SakaiA,Sakai-book-QD} and free boundary (or obstacle) problems \cite{free3,free2,free1}, as well as to Richardson's moment problem \cite{Sakai78,GusB, GusInverse}. Also,
as we will see, there is a close connection to the  logarithmic potential theory with an  external field \cite{ST} in the case where the field has an algebraic potential. A basic question  -- how to describe the {\it droplets} (the support of equilibrium measures) for all possible localizations of a given algebraic (e.g. cubic) potential --  was our initial motivation for this research.  This question naturally appears in the context of the random normal matrix theory \cite{KKMWZ,ABTWZ,WZ3}, and it is essentially equivalent to the well-known {\it inverse problem} of (logarithmic) potential theory \cite{Inverse-book}. 

A crucial part of the inverse problem is the problem of {\em topology} of quadrature domains and, more generally, of algebraic droplets. Once we know that the quadrature domain is, say, simply connected, then we can use the Riemann map technique to find a system of algebraic equations describing the boundary. Similarly, in the doubly connected case we would use the annulus uniformization and arrive to a system of equations involving elliptic functions, etc. see \cite{Crowdy,Crowdy_constructingmultiply} for higher connectivity
cases. Another source of motivation to study topology of quadrature domains comes from the problem of laminarity and topological transitions in the Hele-Shaw flows with various applications to mathematical physics and fluid dynamics \cite{Crowdy,Klein}. Finally, topological properties of algebraic droplets is an interesting topic in itself. The boundary of an algebraic droplet is a real algebraic curve (up to a finite set), and the question of possible topological configurations of its components is a traditional topic in real algebraic geometry.

In the present  paper we will address the problem of topology of quadrature domains, namely we will establish {\it upper bounds} on  the {\it connectivity} of the domain in terms of the number of nodes and their multiplicities in the quadrature identity. First results in this direction were obtained by Gustafsson \cite{Gu1} who proved that bounded quadrature domains of order 2 are simply connected but could be multiply connected for higher orders. We will also discuss several applications of the connectivity  bounds to some of the topics mentioned above. 
The connectivity bounds of this paper are in fact {\it sharp}, which is the subject of the companion paper \cite{paper-sharp}.


Our argument is the combination of three techniques: the description of quadrature domains in terms of the potential theory with an algebraic external field, the conformal dynamics of the Schwarz reflection, and the perturbation technique which is based on the Hele-Shaw flow. We should mention that the idea to use methods of complex dynamics originated in Khavinson and \'Swi\c atek's note \cite{Kha1}.

The paper is organized as follows. In the first two sections we introduce the main concepts and state the results of the paper. In particular, in Section 1
  we   state the connectivity bounds for bounded and unbounded quadrature domains as Theorem \hyperref[thmA]{A}, and in Section 2 we
state some consequences of Theorem \hyperref[thmA]{A}. The proofs are presented in the remaining three sections of the paper. In
Section 3 we clarify the relation between quadrature domains and potential theory with an algebraic external field.
In
Section 4 we use the dynamics of the Schwarz reflection to prove  connectivity bounds in the case of  non-singular domains. Finally, in Section 5 we apply methods of the Hele-Shaw flow to deal with singular points and consequently  finish  the proof of Theorem \hyperref[thmA]{A} and related statements.

The authors would like to thank Dmitry Khavinson, Curtis McMullen, and Paul Wiegmann for their interest and useful discussions.

\section{Quadrature domains}

We will first recall the definition of a quadrature domain and then we will state Theorem \hyperref[thmA]{A}, the main result of the paper.

\subsection{Bounded domains} By definition,
a  bounded  connected open set  $\Omega\subset \CC$ is a bounded quadrature domain (BQD) if it carries a finite node {\it quadrature identity}, which means there exists a finite collection of triples $(a_k,m_k,c_k)$, where  $a_k$'s are points (not necessarily distinct) in $\Omega$,  $m_k$'s are non-negative integers, and $c_k$'s are some complex numbers, such that
\begin{equation}\label{QI}
\forall f\in C_{A}(\Omega),\qquad \int_\Omega f\,\dd A = \sum_k c_k f^{(m_k)}(a_k).
\end{equation}
Here,  $C_{A}(\Omega)$ denotes the space of analytic  functions in $\Omega$ which are continuous up to the boundary.
 We will always assume
\begin{equation*}
\Omega=\inte\clos\Omega,
\end{equation*}
for otherwise, it would be trivial to construct infinitely many domains with the same quadrature data (e.g.,  by deleting  subsets of zero area from $\Omega$).

We can rewrite the definition \eqref{QI} by using the contour integral in the right hand side of the quadrature identity:
\begin{equation}\label{QI1}\forall f\in C_A(\Omega), \qquad
\int_\Omega f\,\dd A=\frac{1}{2\ii}\oint_{\partial\Omega} f(z)\,r(z)\,\dd z,
\end{equation}
where
\begin{equation*}
 r(z)\equiv r_\Omega(z)
=\frac{1}{\pi}\sum_{k} c_k\frac{m_k!}{(z-a_k)^{m_k+1}}.
\end{equation*}
The contour integral is understood in terms of residue calculus (in fact, the integral exists in the usual sense because $\partial\Omega$ is an algebraic curve, see \cite{Gu1}), and we always consider $\partial\Omega$ with the standard orientation relative  to $\Omega$.

By taking $f(z)=(z-w)^{-1}$ in \eqref{QI1}, we see that $r$ is {\em uniquely} determined by the quadrature domain as long as we require that  all poles of $r$ be inside $\Omega$ and $r(\infty)=0$.  We will call $r_\Omega$ the {\em quadrature function} and
\begin{equation*}
d_\Omega:=\deg r_\Omega
\end{equation*}
the {\it order} of  the quadrature domain $\Omega$.
The poles of $r_\Omega$ are called the {\em nodes} of $\Omega$.

\subsection{Unbounded domains}

Slightly modifying the statement in \eqref{QI1}, we extend the concept  of  quadrature identities to the case of unbounded domains. Let  $\Omega\subset \widehat\CC$ (the Riemann sphere) be an open connected set such that   $\infty\in\Omega$ and $\inte\clos\Omega=\Omega$. By definition,
$\Omega$ is an unbounded quadrature domain (UQD) if
there exists a rational function $r=r_\Omega$ with no poles outside $\Omega$ such that
\begin{equation}\label{QI2}
 f\in C_A(\Omega),\quad f(\infty)=0 \quad \Longrightarrow  \quad \int_\Omega f\,\dd A =\frac{1}{2\ii}\oint_{\partial\Omega} f(z)\,r(z)\,\dd z.
\end{equation}
The integrals over unbounded domains are always understood in the the sense of  {\em principal value}:
$$\int_\Omega~\equiv~{\rm v.p.}\int_\Omega~:=~\lim_{R\to\infty}~ \int_{\Omega\cap B(0,R)}.
$$

For example,
an exterior disk $$\Omega=B^\text{ext}(a,\rho):= (\clos B(a,\rho))^c$$ is an unbounded quadrature domain of order 0.  Indeed, if $f(\infty)=0$, then
   for all $R>\rho+|a|$ we have
\begin{equation*}
\int_{\Omega\cap B(0,R)} f\,\dd A=\frac{1}{2\ii}\oint_{\partial\Omega} f(z)\, \overline z \,\dd z=\frac{1}{2\ii}\oint_{\partial\Omega} f(z)\, \left(\overline a+\frac{\rho^2}{z-a}\right) \,\dd z=\frac{1}{2\ii}\oint_{\partial\Omega} f(z)\, \overline a \,\dd z,
\end{equation*}
and so
\begin{equation}\label{extcircle}
r_\Omega(z)\equiv \overline a.
\end{equation}

\subsection{First examples}
It is known that disks are the only BQDs of order one, and exterior disks the only UQDs of order 0, see \cite{Epstein1,Epstein2}.
In Figure \ref{fig-1} we show some examples of BQDs of order two and UQDs of order one.

\begin{figure}[ht]
\begin{center}
\includegraphics[width=0.7\textwidth]{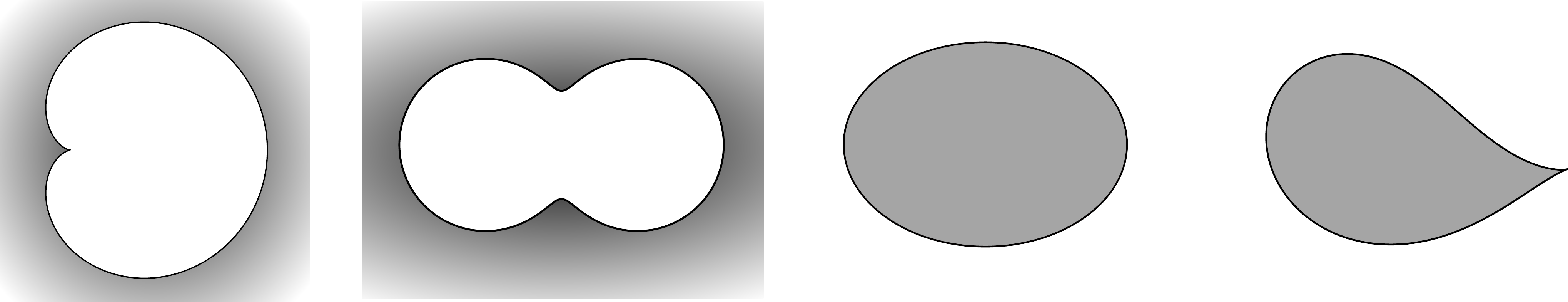}
\end{center}
\caption{\label{fig-1} From left to right: cardioid; Neumann's oval; ellipse; Joukowsky's airfoil.  The shaded parts are complements of quadrature domains.}
\end{figure}
\begin{itemize}
\item There are two types of BQDs of order two -- with a single node and with two nodes. The domains of the first type are
{\it lima\c{c}ons}; in the special case where the boundary has a cusp, the domain is a {\it cardioid}.
Lima\c{c}ons as examples of quadrature domains were identified by Polubarinova-Kochina \cite{PK1,PK2} and Galin \cite{Galin} in the context (and language) of the Hele-Shaw problem. {\it Neumann's ovals} \cite{Neumann-oval1,Neumann-oval2} are BQDs such that $r_\Omega$ has two simple poles with equal residues. The boundary of a Neumann oval can be  obtained by reflecting an ellipse in a concentric circle.

\item
Unbounded quadrature domains of order 1  also come in two varieties -- depending on the location of the node.  If the node is at $\infty$, then the domain is the exterior of an ellipse. Examples with a finite node include the exteriors of
Joukowsky's {\it airfoils}. The boundary of an airfoil is a Jordan curve with a cusp; the curve is the image of a circle under  Joukowsky's map $z\mapsto z+\frac1z$.

\end{itemize}

\bigskip
{\bf Remarks.}
\begin{itemize}
\item[(a)] {\it Circular inversion}

It is not difficult to show that the reflection in the unit circle provides a one-to-one correspondence between the class of bounded quadrature domain $\Omega$ of order $d+1$ satisfying $0\in\Omega$ and the class of unbounded quadrature domain of order $d$ satisfying $0\notin\clos\Omega$.    This follows, for instance, from the Schwarz function characterization of quadrature domains; see Remark (b) in Section \ref{sec-Sch-refl}.

On the other hand, there is no simple way to relate the quadrature data (multiplicities and location of the nodes) under the circular inversion.  For example, a circular inversion of the exterior of an airfoil can have one or two distinct nodes.
This is the reason why we often need to consider the cases of  bounded and unbounded quadrature domains separately.

\item[(b)] {\it Univalent rational functions}

All examples in Figure \ref{fig-1} involve simply-connected quadrature domains.  A simply-connected domain is a quadrature domain if and only if the corresponding Riemann map is a rational function.    The theory of univalent functions, see e.g. \cite{Duren}, provides many explicit examples of simply connected quadrature domains of higher order.

\end{itemize}

\subsection{Connectivity bounds}

Applying methods of Riemann surface theory, Gustafsson \cite{Gu1} showed that all BQDs of order 2 (and therefore all UQDs of order 1) are simply-connected, so examples in Figure \ref{fig-1} represent exactly all possible cases.  At the same time, referring to Sakai's work \cite{Sakai-book-QD,Sakai83}, Gustafsson proved the existence of BQDs of connectivity $2d-4$ for all $d=d_\Omega\geq 3$.  (Earlier, Levin \cite{Levin} constructed bounded, doubly-connected domains that satisfy quadrature identities of order 2 for all analytic functions with single-valued primitives.)

The main goal of this paper is to give upper bounds on the connectivity of quadrature domains in terms of multiplicities of their nodes.  In particular, we will see that Sakai-Gustafsson's examples are best possible if all nodes are simple.
Our results, which we state in Theorems \ref{thm-A1} and \ref{thm-A2} below, have different forms for bounded and unbounded quadrature domains.   As we explain in the next subsection, the inequalities in these theorems are sharp.
For a quadrature domain $\Omega$, we denote
\begin{equation*}\label{calK}
\conn\Omega = \#(\text{components in $\widehat\CC\setminus\Omega$}),
\end{equation*}
and
\begin{equation*}
n_\Omega=\#\text{(distinct poles of $r_\Omega$)}.
\end{equation*}

\phantomsection
\label{thmA}

\bigskip
\begin{customthm}{A1}\label{thm-A1}
If $\Omega$ is an unbounded quadrature domain of order $d_\Omega\geq 2$, then
\begin{equation}\label{eq-thma1}
\conn\Omega~\leq~ \min(d_\Omega+n_{\Omega}-1,~ 2d_\Omega-2).
\end{equation}
If, in addition, one of the  nodes is at $\infty$, then
\begin{equation}\label{eq-thma2}
\conn\Omega~\leq~ d_\Omega+n_{\Omega}-2.
\end{equation}
\end{customthm}
\bigskip
\begin{customthm}{A2}\label{thm-A2}
If $\Omega$ is a bounded quadrature domain of order  $d_\Omega\ge3$, then
\begin{equation}\label{eq-thma3}
\conn\Omega~\leq~ \min(d_\Omega+n_{\Omega}-2,~2d_\Omega-4).
\end{equation}
If, in addition, there are no nodes of  multiplicity $\ge3$, then
\begin{equation}\label{eq-thma4}
\conn\Omega~\leq~ \min(d_\Omega+n_{\Omega}-3, ~2d_\Omega-4).
\end{equation}
\end{customthm}
We will refer to these two theorems collectively as Theorem \hyperref[thmA]{A}.
As we mentioned, if $d_\Omega=1$ for an unbounded $\Omega$ or $d_\Omega\leq 2$ for a bounded $\Omega$, then  the quadrature domain is simply connected.

Let us also emphasize the special case when $\Omega$ has a single node ($n_\Omega=1$).
\bigskip
\begin{cor}\label{cor1} If $\Omega$ is a UQD such that $r_\Omega$ is a polynomial, or if $\Omega$ is a BQD with a single node, then
\begin{equation*}
\conn\Omega\leq d_\Omega-1,\qquad (d_\Omega\ge2).
\end{equation*}
\end{cor}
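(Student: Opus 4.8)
The plan is to deduce the corollary directly from Theorem~\hyperref[thmA]{A} by specializing to the single-node situation $n_\Omega = 1$ and checking that the resulting minima collapse to $d_\Omega - 1$. Both hypotheses force exactly one distinct node, so the real content is to identify which inequality of Theorem~A applies in each case and to verify the arithmetic of the minimum.

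First I would treat the unbounded case. If $r_\Omega$ is a polynomial of degree $d_\Omega$, then its only pole is a pole of order $d_\Omega$ at $\infty$; hence $n_\Omega = 1$ and the (unique) node sits at $\infty$. This is precisely the situation covered by the refined bound \eqref{eq-thma2} of Theorem~\ref{thm-A1}, which gives
\begin{equation*}
\conn\Omega \leq d_\Omega + n_\Omega - 2 = d_\Omega - 1
\end{equation*}
for all $d_\Omega \geq 2$. The point worth flagging is that the general bound \eqref{eq-thma1} alone would only yield $\conn\Omega \leq \min(d_\Omega,\, 2d_\Omega-2) = d_\Omega$, so recognizing that a polynomial quadrature function has its sole pole \emph{at infinity} is exactly what activates the sharper estimate and saves the extra unit.

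Next I would handle the bounded case, where a single node again means $n_\Omega = 1$. For $d_\Omega \geq 3$ I substitute into \eqref{eq-thma3}, obtaining
\begin{equation*}
\conn\Omega \leq \min(d_\Omega + n_\Omega - 2,\ 2d_\Omega - 4) = \min(d_\Omega - 1,\ 2d_\Omega - 4) = d_\Omega - 1,
\end{equation*}
where the last equality uses that $2d_\Omega - 4 \geq d_\Omega - 1$ is equivalent to $d_\Omega \geq 3$. The only remaining value in the stated range is $d_\Omega = 2$, which lies outside the hypothesis of Theorem~\ref{thm-A2}; here I would simply invoke the fact already recorded in the text that every BQD of order $2$ is simply connected, so $\conn\Omega = 1 = d_\Omega - 1$, covering the full range $d_\Omega \geq 2$.

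Since the corollary is essentially a bookkeeping consequence of Theorem~A, there is no genuine analytic obstacle. The only two steps requiring care are the ones flagged above: correctly reading off the node data from the polynomial hypothesis in the unbounded case, so that the stronger inequality \eqref{eq-thma2} is available rather than \eqref{eq-thma1}; and isolating the boundary value $d_\Omega = 2$ in the bounded case, where Theorem~\ref{thm-A2} does not formally apply and one must fall back on simple connectivity.
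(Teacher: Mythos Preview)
Your argument is correct and matches the paper's approach: the corollary is stated immediately after Theorem~A as the specialization $n_\Omega=1$, with no separate proof given. Your careful identification of which inequality of Theorem~A to invoke in each case (including the handling of $d_\Omega=2$ for BQDs via simple connectivity) is exactly the intended reading.
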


\subsection{Sharpness of connectivity bounds}\label{sec-sharp}

The bounds in Theorem \hyperref[thmA]{A} are best possible in the following sense.
\bigskip

\begin{customthm}{B1}\label{thm-B1}
Given numbers $d\geq 2$, $n\leq d$ and a partition $d=m_1+\cdots+m_{n}$ (where $m_j$'s are positive integers),
there exists a UQD $\Omega$ of order $d$ with $n$ finite nodes of multiplicities $m_1,\cdots,m_{n}$ such that
\begin{equation*}
\conn\Omega = \min(d+n-1,~ 2d-2) .
\end{equation*}
Also, there exists a UQD of order $d$ with a node of multiplicity $m_n$ at $\infty$ and with $n-1$ finite nodes of multiplicities $m_1,\cdots,m_{n-1}$ such that
\begin{equation*}
\conn\Omega = d+n-2.
\end{equation*}
\end{customthm}
\bigskip
\begin{customthm}{B2}\label{thm-B2}
Given numbers $d\geq 3$, $n\leq d$, and a partition $d=m_1+\cdots+m_{n}$, there exists a BQD $\Omega$ of order $d$ with node multiplicities $m_1,\cdots,m_{n}$ such that
\begin{equation*}
\conn\Omega = \min(d+n-3,~ 2d-4).
\end{equation*}
If at least one of $m_j$ is $\geq 3$, then there exists a BQD with node multiplicities $m_1,\cdots,m_{n}$ such that
\begin{equation*}
\conn\Omega = d+n-2.
\end{equation*}
\end{customthm}
For example, there are four possible cases for unbounded quadrature domains of order 2:
\begin{itemize}
\vspace{-0.4cm}
\item[(i)] $n=1$, the pole is finite;
\vspace{-0.4cm}
\item[(ii)] $n=2$, both poles are finite;
\vspace{-0.4cm}
\item[(iii)] $n=1$, the pole is infinite;
\vspace{-0.4cm}
\item[(iv)] $n=2$, one pole is finite, the other is $\infty$.
\end{itemize}
In the first two cases, according to Theorem \ref{thm-B1} there are UQDs $\Omega$ such that
\begin{equation*}
\conn\Omega=\min(d+n-1,~2d-2)=2.
\end{equation*}
In cases (iii) and (iv),
\begin{equation*}
\conn\Omega=d+n-2=\begin{cases} 1 \quad \text{ case (iii)}, \\2 \quad \text{ case (iv)}.\end{cases}
\end{equation*}
\begin{figure}[ht]
\begin{center}
\includegraphics[width=0.9\textwidth]{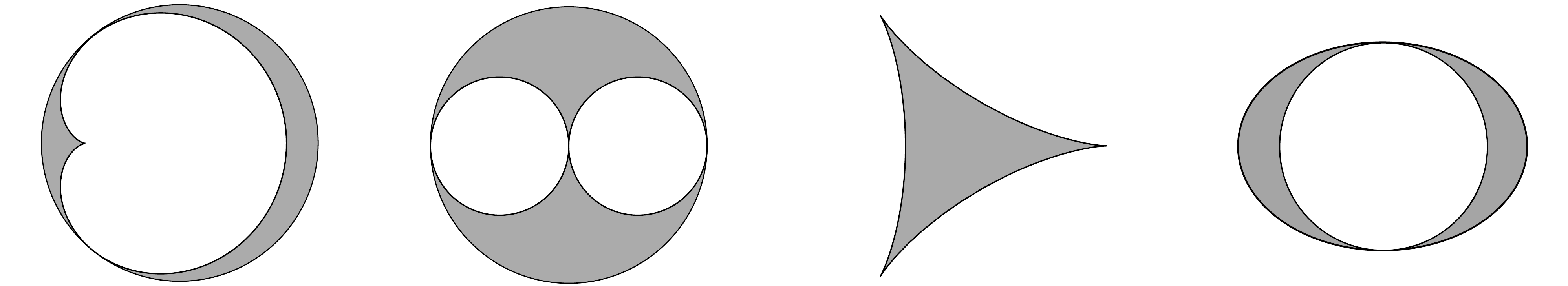}
\caption{\label{fig-sharpUQD} Construction of UQD of order $2$}
\end{center}
\end{figure}

The pictures in Figure \ref{fig-sharpUQD} illustrate (and basically prove) the existence of such quadrature domains.  The unshaded regions (e.g. the cardioid and the exterior disc in the first picture) are the unions of disjoint quadrature domains.  The sum of the orders of quadrature domains is 2 in each picture, and multiplicities and positions of the nodes correspond to our cases (i)-(iv).  By a small perturbation that preserves the quadrature data (the sum of quadrature functions) we can transform each union of quadrature domains into a single connected quadrature domain.  This way we obtain examples of quadrature domains of maximal connectivity.  The perturbation procedure will be explained in Section \ref{sec-HSflow}.

\begin{figure}[ht]
\begin{center}
\includegraphics[width=0.7\textwidth]{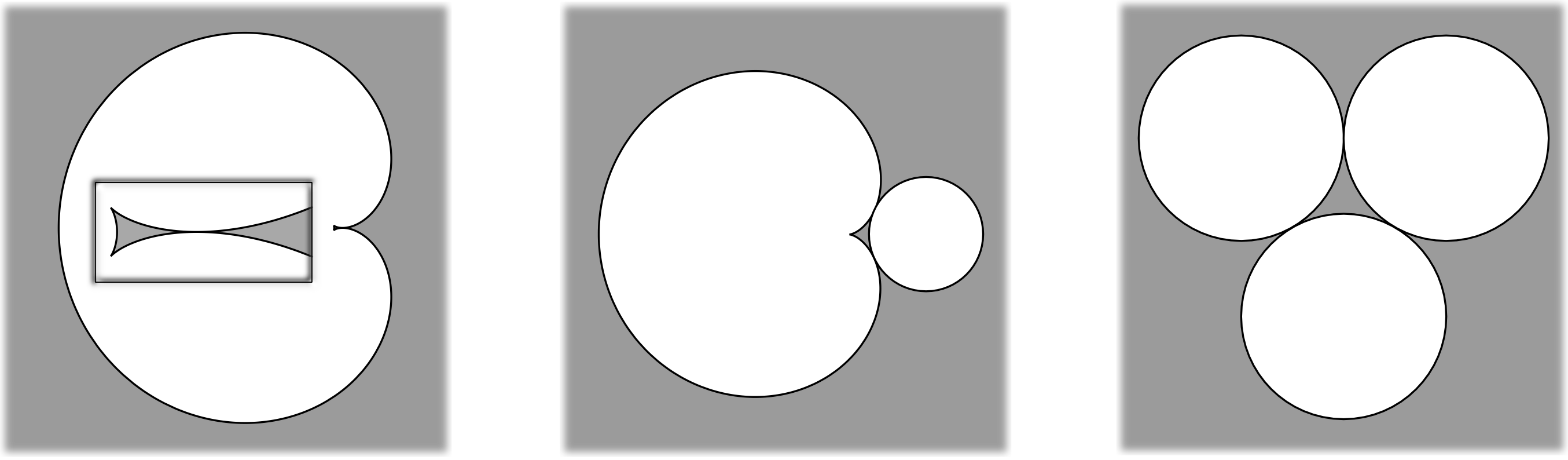}
\caption{\label{fig-sharpBQD3} Construction of BQD of order $3$}
\end{center}
\end{figure}

Similarly, there are three possibilities for BQDs of order $d=3$: they correspond to the partitions
\begin{equation*}
3=3, ~~3=2+1,~~ 3=1+1+1.
\end{equation*}
In the first case, $\Omega$ has a triple node (i.e. $n=1$) and \eqref{eq-thma3} implies
\begin{equation*}
\conn\Omega\leq \min(3+3-2,6-4)=2.
\end{equation*}
In the cases  $3=2+1$ and $3=1+1+1$ we apply \eqref{eq-thma4}, and also get
\begin{equation*}
\conn\Omega\leq 2.
\end{equation*}
The existence of doubly connected quadrature domains in all three cases (Theorem \ref{thm-B2}) follows from Figure \ref{fig-sharpBQD3}.  The leftmost picture (the case of a triple node) is the image of the unit disc under the univalent polynomial
\begin{equation}\label{poly3}
z\mapsto z -\frac{2\sqrt 2}{3} z^2 + \frac{1}{3} z^3.
\end{equation}
The boxed inset gives a magnified view near the cusp of the exterior boundary.  The polynomial \eqref{poly3} was discovered by Cowling and Royster \cite{Cowling-Royster}, and Brannan \cite{Brannan}.

In the companion paper \cite{paper-sharp} we extend the above construction to prove Theorem \ref{thm-B1} and \ref{thm-B2} for all values of $d$.  The main fact is the existence of univalent rational functions similar to \eqref{poly3}, which can be used to show the sharpness of the bound in Corollary \ref{cor1}.  We also explain in \cite{paper-sharp} that the sharpness results for UQDs in the cases $n=d$ and $n=1$ are closely related to the sharpness results by Rhie \cite{Rh03} and, respectively, by Geyer \cite{Ge03} concerning the maximal number of solutions to the equation $\overline z= r(z)$ where $r$ is a rational function.

\section{Algebraic  droplets}

In this section we discuss some application of the connectivity bounds in Theorem \hyperref[thmA]{A} to logarithmic potential theory with an external field.   More specifically we consider the case where
the external field has an {\it algebraic Hele-Shaw } potential.  The definition of algebraic Hele-Shaw potentials is given below in Section \ref{sec-AHS} and the relation to the Hele-Shaw flow is explained in Section \ref{sec-HSflow}.
The problem of topological classification of all possible shapes of the support of the equilibrium measure first appeared in the context of random normal matrix models, see Section \ref{sec-RMT}.

\subsection{Logarithmic potential theory with external field}\label{sec-log}
Given a function (called the {\it ``external potential''})
$$Q:\CC\to\RR\cup\{+\infty\}$$
we define, for each positive Borel measure $\mu$ with a compact support in $\CC$,  the weighted logarithmic $Q$-energy $I_Q[\mu]$ by the formula
\begin{equation}\label{L} I_Q[\mu]=\int_{\CC\times\CC} \log\frac{1}{|z-w|}\dd\mu(z)\,\dd\mu(w)+ \int_\CC Q\,\dd\mu.
\end{equation}
Physical interpretation: $\mu$ is an electric charge distribution and $I_Q[\mu]$ is the {\em total electrostatic energy} of $\mu$, the sum of the 2D Coulomb energy and the energy of interaction with  the external field.

We always assume that $Q$ is {\it lower  semi-continuous} (in particular the  expression for $I_Q[\mu]$ makes sense), and that $Q$ is finite on some set of positive logarithmic capacity.  A typical situation that we will encounter is when $Q$ is finite and continuous on some closed set with non-empty interior and $Q=+\infty$ elsewhere.  Under these conditions, the classical Frostman's theorem (see \cite{ST}) states that for each $t>0$ such that
\begin{equation}\label{growth}
Q(z)-t\log|z|^2\to+\infty,\qquad z\to\infty,
\end{equation}
there exists a unique ({\it equilibrium}) measure $\sigma_t$ of mass $t$ that minimizes the $Q$-energy:
\begin{equation}\label{II}
I_Q[\sigma_{t}]=\min_{\|\mu\|=t} I_Q[\mu].
\end{equation}
Let us denote
\begin{equation}\label{St}
S_t\equiv S_t[Q]={\rm supp}(\sigma_t).
\end{equation}
It can be shown (see  \cite{HM2011}) that if $Q$ is {\it smooth in some neighborhood  of} $S_t$ (and satisfies the growth condition \eqref{growth} at infinity) then the equilibrium measure is absolutely continuous, and in fact it is given by the formula
\begin{equation*} \dd \sigma_{t} = \textstyle \frac{1}{4\pi}\Delta Q\cdot {\bf 1}_{{\mathcal S}_t}\,\dd A,\end{equation*}
where $\Delta Q$ is the  Laplacian and ${\bf 1}_{{\mathcal S}_t}$ is the indicator function.
In this case, we  refer to $S_t$ as the {\it droplet} of $Q$ of mass $t$.  The point is that we can recover the equilibrium measure from the shape of the droplet.

It is not easy (if at all possible) to find the shapes of the droplets for general external potentials but  there are interesting  explicit examples, see e.g. \cite{ABTWZ}, in the ``algebraic" case that we describe next.

\subsection{Algebraic Hele-Shaw potentials. Local droplets}\label{sec-AHS} The  class of external potentials that  we will consider consists of {\em localized algebraic Hele-Shaw potentials}. Let us explain the terminology.

 A smooth function $Q:{\mathcal O}\to{\mathbb R}$, where ${\mathcal O}$ is some open set in  ${\CC}$, is called a {\it Hele-Shaw potential} if $\Delta Q$ is constant in ${\mathcal O}$. We can choose this constant equal to $4$, so  Hele-Shaw potentials are functions of the form
\begin{equation*}
Q(z)=|z|^2 - H(z),\qquad H \;\text{is harmonic in}~ {\mathcal O}.
\end{equation*}
A Hele-Shaw potential is called an {\it algebraic potential} if
$$h:=\partial H \;\text{is a  rational function};$$
where $\partial$ means the complex derivative $\partial/\partial z$. We will call  $h$ the {\it quadrature function}
 of the algebraic potential $Q$.

We want to emphasize that {\em algebraic} potentials, considered as function on the full plane $\CC$, do not satisfy the conditions of the Frostman theorem and therefore cannot be used as external potentials in the variational problem of minimizing $Q$-energy.  The only exception is the case
$$Q(z)=|z|^2-\Re(az^2+2bz)-\sum c_j\log|z-z_j|^2,\qquad |a|<1,\; c_j>0,$$
where we can extend $Q$ to a continuous map $\CC\to\RR\cup\{+\infty\}$ which satisfies the growth condition \eqref{growth} for all $t>0$. For example, if
$Q(z)=|z|^2-\Re(az^2+2bz)$ and if $|a|<1$, then the droplets are concentric ellipses. On the other hand, if $|a|\ge 1$ or if the quadrature function $h$ is a non-linear polynomial, e.g. $h(z)=z^2$, then  the variational problem \eqref{II} has no solution.

This leads us to the concept of local droplets \cite{HM2011}. By definition, a compact set $K\subset{\mathcal O}$ is a {\it local droplet} of $Q$ if the  measure $\textstyle\frac{1}{4\pi}\Delta Q\cdot {\bf 1}_{K}\,\dd A$, which is just the normalized area measure of $K$ in the case of Hele-Shaw potentials, is the equilibrium measure of mass $t=A(K)/\pi$ of a {\it localized  potential}
\begin{equation}\label{local}
Q_{L} := Q\cdot {\bf 1}_{L} +\infty\cdot{\bf 1}_{\CC\setminus L},
\end{equation}
for some compact set $L\subset {\mathcal O}$.
For instance, $K$ is a local droplet if there is a neighborhood $U\subset\mathcal O$ of $K$ such that  $K$ is a (non-local) droplet of the potential  $Q_{\clos U}$.  We call such local droplets {\it non-maximal}.

The following relation between algebraic droplets and quadrature domains is central for this paper.

Let $K$ be a {\em local droplet of an algebraic potential} $Q$ (an {\em algebraic droplet} for short) with quadrature function $h$.  Then $K^c$ is a union of finitely many quadrature domains, $K^c=\bigsqcup\Omega_j$, and
\begin{equation}\label{HS-QD}
   h = \sum r_{\Omega_j} .
 \end{equation}
The converse is also true: if the complement of a compact set is a disjoint union of quadrature domains then $K$ is a local droplet for some algebraic potential.

These statements will be explained  in Section \ref{sec-dual} (Theorems \ref{thm-HS-QD} and \ref{QDtodroplet}).

\subsection{Random normal matrix model and Richardson's moment problem}\label{sec-RMT}

Recall that the eigenvalues $\{\lambda_j\}\in \CC^n$ of the $n\times n$ matrices in the random normal matrix model with a given potential $Q$ are distributed according to the probability measure
\begin{equation}\label{partition-fct}
\ee^{-H(\lambda)}\dd\lambda~\bigg/~\int\ee^{-H(\lambda)}\dd\lambda\qquad (\lambda\in\CC^n),
\end{equation}
where $\dd\lambda:=\dd A(\lambda_1)\dots \dd A(\lambda_n)$ and
\begin{equation*}
H(\lambda)=\sum_{i\ne j}\log\frac{1}{|\lambda_i-\lambda_j|} + n\sum_i Q(\lambda_i).
\end{equation*}
Comparing the last expression with \eqref{L}, it is natural to expect that the random measures
\begin{equation*}
\mu_n=\frac{1}{n}\sum_j\delta_{\lambda_j}
\end{equation*}
converge to the equilibrium measure of mass one as $n\to\infty$, and according to \cite{Joh}, see also \cite{EF} and \cite{HM2011}, this is indeed the case if $Q$ satisfies the conditions of the Frostman theorem.

More generally, for $t>0$, the eigenvalues of the random normal matrix with potential $Q/t$ condensate on the set $S_t[Q]$, see \eqref{St}.

The random normal matrix model with $Q(z)=|z|^2$ has been extensively studied (the distribution of eigenvalues is known as the complex Ginibre ensemble) as well as its immediate  generalizations $Q(z)=|z|^2-\Re(az^2+bz)$ with $|a|<1$.
One of the first ``non-Gaussian'' examples, the case of the ``{\it cubic}'' potential $Q(z)=|z|^2 - \Re z^3$, was considered in \cite{ABTWZ}.  Despite the fact that the model is not well-defined (the integral in  \eqref{partition-fct} diverges), the authors constructed (somewhat formally) the family of ``droplets'' shown in Figure \ref{fig-3}.  This is an increasing family of closed Jordan domains bounded by certain hypotrochoids.  The boundary of the largest domain has 3 cusps; the curve is known as the {\it deltoid} curve.  Elbau and Felder \cite{EF} suggested a mathematical meaning of formal computations in terms of certain cut-offs (or localizations) of the potential, and in fact one can show that the compact sets  in Figure \ref{fig-3} are local droplets of $Q$, and these droplets  are  non-maximal except for the deltoid, see the text below \eqref{local}.

\begin{figure}[ht]
\begin{center}
\includegraphics[width=0.25\textwidth]{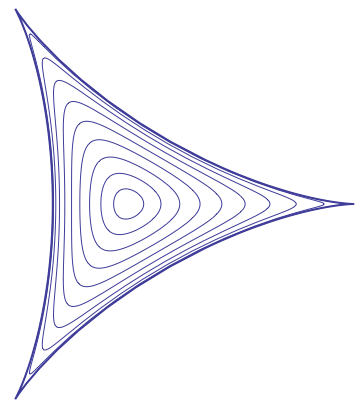}
\caption{\label{fig-3} Local droplets of the cubic potential}
\end{center}
\end{figure}

There are infinitely many ways to localize a given potential, i.e. to choose a compact set $L$ in the definition \eqref{local} of localized potentials $Q_L$ and local droplets of $Q$, so the question arises whether the hypotrochoids in \cite{ABTWZ} represent all possible local droplets of the cubic potential and, in particular, whether the maximal mass $t$ that a local droplet can have is the area of the deltoid.  The answer is ``yes'', and the proof  depends on the fact that  unbounded quadrature domains of order two with a double node at infinity are simply connected (see Corollary \ref{thm-theyare}).

\bigskip
\begin{thm}\label{thm-theyare} If $h$ is a quadratic polynomial, then there is at most one (maybe none) local droplet of a given area such that $h$ is its quadrature function.
\end{thm}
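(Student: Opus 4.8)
The plan is to use the connectivity bound to collapse the droplet to a single simply-connected exterior domain, and then to turn uniqueness into a rigidity statement about the (necessarily rational) exterior Riemann map of that domain.

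\emph{Step 1: reduction to one simply-connected component.} Let $K$ be a local droplet of $Q$ with quadrature function $h$. By \eqref{HS-QD} we may write $K^c=\bigsqcup_j\Omega_j$ with $h=\sum_j r_{\Omega_j}$. Exactly one component $\Omega_0$ is unbounded (it contains $\infty$); the rest are bounded, so their quadrature functions have only finite poles and vanish at $\infty$. Since $h$ is a polynomial it has no finite poles, and because the poles of distinct $r_{\Omega_j}$ lie in the pairwise disjoint sets $\Omega_j$, no cancellation of finite poles is possible; hence each bounded $r_{\Omega_j}$ has no finite poles at all, forcing $r_{\Omega_j}\equiv 0$, which is impossible for a genuine quadrature domain (take $f\equiv 1$). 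Thus $K^c=\Omega_0$ is a single unbounded quadrature domain with $r_{\Omega_0}=h$; its only node is a double node at $\infty$, so $d_{\Omega_0}=2$ and $n_{\Omega_0}=1$. Corollary \ref{cor1} (equivalently \eqref{eq-thma2}) gives $\conn\Omega_0\le 1$, i.e.\ $K=\widehat\CC\setminus\Omega_0$ is connected. A connected compact set with connected complement is simply connected, and $\Omega_0$ is then a simply-connected subdomain of $\widehat\CC$ containing $\infty$.

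\emph{Step 2: rational parametrization and the coefficient equations.} Because $\Omega_0$ is a simply-connected quadrature domain, its normalized exterior Riemann map $\phi\colon\{|w|>1\}\to\Omega_0$, with $\phi(\infty)=\infty$ and $\phi'(\infty)>0$, is rational. On $|w|=1$ the Schwarz function $S$ of $\partial\Omega_0$ satisfies $S(\phi(w))=\overline{\phi(w)}=\phi^*(1/w)$, where $\phi^*$ denotes $\phi$ with conjugated coefficients. Since $r_{\Omega_0}=h$, the Schwarz-function characterization (Remark (b) of Section \ref{sec-Sch-refl}) makes $S-h$ analytic in $\Omega_0$ and vanishing at $\infty$, so $S$ is analytic on $\Omega_0$ except for a double pole at $\infty$. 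Tracing poles through $S(\phi(w))=\phi^*(1/w)$ then forces $\phi$ to have poles only at $\infty$ (simple) and at $0$ (double): any other finite pole $p$ of $\phi$ (necessarily in $|p|<1$) would produce a pole of $\phi^*(1/w)$ at $1/\bar p$ inside $\{|w|>1\}$. Hence $\phi(w)=\rho w+c_0+c_1w^{-1}+c_2w^{-2}$ with $\rho>0$, $c_2\neq 0$. Writing $h(z)=\alpha z^2+\beta z+\gamma$ and demanding $S(\phi(w))-h(\phi(w))=O(1/w)$ (which is exactly the condition that $S-h$ be analytic and vanish at $\infty$) annihilates the $w^2,w^1,w^0$ coefficients and gives
\[
c_2=\bar\alpha\rho^2,\qquad c_1=\rho\,\overline{2\alpha c_0+\beta},\qquad
\bar c_0\bigl(1-4|\alpha|^2\rho^2\bigr)=\alpha c_0^2+\beta c_0+2\alpha\bar\beta\rho^2+\gamma .
\]
The prescribed area is encoded by the exterior area theorem, $t=A(K)/\pi=\rho^2-|c_1|^2-2|c_2|^2$. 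Eliminating $c_1,c_2$ leaves a square real-analytic system in the three real unknowns $(\rho,\operatorname{Re}c_0,\operatorname{Im}c_0)$.

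\emph{Step 3: uniqueness, and the main obstacle.} It remains to show this system has at most one \emph{admissible} solution, where admissible means $\rho>0$ and $\phi$ univalent on $\{|w|>1\}$. I expect this to be the crux: the relation for $c_0$ is genuinely quadratic and can a priori yield several roots, so it is precisely the univalence constraint that must select one. The mechanism is transparent in the symmetric case $\beta=\gamma=0$ (the cubic potential $|z|^2-\operatorname{Re}z^3$): then $c_0=c_1=0$, $c_2=\bar\alpha\rho^2$, the area reduces to $t=\rho^2-2|\alpha|^2\rho^4$, and $\phi(w)=\rho w+c_2w^{-2}$ is univalent exactly when $2|c_2|\le\rho$, i.e.\ $\rho\le 1/(2|\alpha|)$ --- which is precisely the strictly increasing branch of $t(\rho)$. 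Hence $\rho$, and therefore $K$, is uniquely determined by $t$, and $t$ cannot exceed the maximal (deltoid) value $1/(8|\alpha|^2)$. For general $h$ the plan is to solve the $c_0$-relation for $c_0=c_0(\rho)$ on the univalent locus and prove that $t(\rho)=\rho^2-|c_1(\rho)|^2-2|c_2(\rho)|^2$ is strictly increasing up to the first $\rho$ at which $\phi$ loses univalence (where $\partial\Omega_0$ develops a cusp, the unique maximal droplet), so that the fibers of $t$ are singletons. Establishing this monotonicity --- equivalently, injectivity of the coefficient-to-data map on the univalent locus --- is the hard part; reducing the free parameters by normalizing $\alpha$ via rotation and scaling and adjusting $\gamma$ by an affine change of variable should make the computation manageable.
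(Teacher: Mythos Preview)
Your Step~1 is correct and coincides with the paper's opening reduction: any local droplet for a quadratic $h$ has connected, simply connected complement, so $K^c=\Omega_0$ is a simply connected UQD with a single double node at~$\infty$. Step~2 is also sound: the exterior Riemann map is rational of the stated form, and your coefficient identities and area formula are right.

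The gap is Step~3. You carry the argument to completion only in the symmetric case $\beta=\gamma=0$, and you explicitly label the general case ``the hard part.'' As written this is a plan, not a proof. Two separate issues remain open: (i) for each admissible $\rho$ the $c_0$-equation is genuinely quadratic, so you must show that at most one root yields a univalent $\phi$; (ii) even granting a well-defined branch $c_0(\rho)$, you must prove $t(\rho)=\rho^2-|c_1(\rho)|^2-2|c_2(\rho)|^2$ is strictly monotone on it up to the first loss of univalence. Neither is supplied, and neither is routine.

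The paper takes an entirely different, coordinate-free route that avoids all coefficient analysis. Given two droplets $K,\tilde K$ of the same area $\pi T$ with quadrature function $h$, it runs the \emph{backward Hele-Shaw chains} $\{K_t\},\{\tilde K_t\}$ with source at infinity; both preserve the quadrature function $h$. Connectedness of the droplets (your Step~1) forces $K_t=K_t^*$ and $\tilde K_t=\tilde K_t^*$ (the obstacle-problem coincidence sets) for all $t$. The key observation is that $K_0^*=\tilde K_0^*$: each is a nonempty set of local minima of $Q(z)=|z|^2-\Re\int h$, hence a set of non-repelling fixed points of $z\mapsto\overline{h(z)}$, and a quadratic $h$ admits at most one such point (this is the Khavinson--\'Swi\c atek count). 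A short continuity/uniqueness argument using the obstacle description of the flow then propagates $K_t=\tilde K_t$ from $t=0$ all the way to $t=T$. This dynamical uniqueness replaces the monotonicity you were after and handles all quadratic $h$ uniformly.
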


See details in Section \ref{sec-proof-unique}.

This theorem has an interpretation in terms of the {\it inverse moment problem} that we describe below.
For a domain $\Omega\subset\widehat\CC$ such that
\begin{equation}\label{eqOmega}
\infty\in\Omega,\quad 0\not\in\clos\Omega,\quad \Omega=\inte\clos\Omega,
\end{equation}
we define the moments
$$  m_0=A(\Omega^c),\quad m_{k}=\int_\Omega z^{-k}\dd A(z),\quad (k=1,2,\cdots).$$
It is easy to see that the moments don't determine $\Omega$ in the class of multiply-connected domains (e.g., compare the moments of a disk and an annulus).  In fact, we have a similar non-uniqueness phenomenon for simply-connected domains if we don't require that the closures of the domains be simply-connected; see, for example, the construction in \cite{Sakai87}.  Furthermore, there are non-uniqueness examples for Jordan domains with infinitely many non-vanishing moments, see \cite{Sakai78}, but it is a well-known open problem to construct two distinct Jordan domains with equal moments $m_k$ such that $m_k=0$ for $k\geq k_0$.  In this regard, Sakai \cite{Sakai81} proved this is impossible for $k_0=3$.

\bigskip
\begin{cor}\label{thm-IMP} If two unbounded Jordan domains, $ \Omega$ and $\tilde\Omega$, satisfy \eqref{eqOmega} and have the same moments such that $m_k=0$ for all $k\ge4$, then $\Omega=\tilde\Omega$. \end{cor}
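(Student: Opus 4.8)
The plan is to show that the moment hypothesis forces each of $\Omega$ and $\tilde\Omega$ to be an unbounded quadrature domain whose quadrature function is one and the same quadratic polynomial $h$, and then to invoke Theorem \ref{thm-theyare} together with the equality of areas $m_0$ to conclude $\Omega=\tilde\Omega$. The essential point, and the main obstacle, is the first step: we are \emph{not} told that $\Omega$ is a quadrature domain, so this must be extracted from the vanishing of the high moments \eqref{eqOmega}.

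First I would package the moments into a single regularized Cauchy transform. Let $D$ be the bounded Jordan domain with $\partial D=\partial\Omega$, so that $\widehat\CC\setminus\clos\Omega=D$ and $0\in D$. For $w\in D$ set
\[
\tilde\phi(w)=\int_\Omega\Bigl(\frac{1}{z-w}-\frac{1}{z}-\frac{w}{z^2}\Bigr)\,\dd A(z).
\]
The two subtracted terms make the integrand $O(z^{-3})$ at infinity, so the integral converges absolutely and defines a holomorphic function on $D$. Expanding the kernel in powers of $w$ shows that the Taylor series of $\tilde\phi$ at $0$ is $\sum_{k\ge3}m_k w^{k-1}$; under the hypothesis $m_k=0$ for $k\ge4$ this reduces to the single term $m_3w^2$, and since $D$ is connected the identity theorem upgrades this to $\tilde\phi\equiv m_3w^2$ on all of $D$.

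Next I would produce a Schwarz function. Extending $\tilde\phi$ to all of $\CC\setminus\{0\}$ by the same integral and differentiating, one has $\bar\partial_w\tilde\phi=-\mathbf 1_\Omega$, so $\mathcal G:=\tilde\phi+\bar w$ is holomorphic in $\Omega$. As an area potential $\tilde\phi$ is continuous across $\partial\Omega$, and matching its boundary values from the two sides (on the $D$-side $\tilde\phi=m_3w^2$) gives
\[
\bar z=\mathcal G(z)-m_3z^2=:S(z)\qquad(z\in\partial\Omega),
\]
with $S$ holomorphic in the finite part of $\Omega$. To control the only possible singularity, at $\infty$, I would compute the moments by residues: Green's formula on $\Omega\cap B(0,R)$ (the circle at infinity contributes nothing for $k\ge1$, since $\bar z=R^2/z$ there), followed by replacing $\bar z$ by $S(z)$ and collapsing $\partial\Omega$ onto a large circle, yields $m_k=-\pi s_{k-1}$, where $s_j$ is the coefficient of $z^j$ in the Laurent expansion of $S$ at $\infty$. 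Hence $s_j=0$ for $j\ge3$, so $S$ has at worst a double pole at infinity with polynomial part
\[
r_\Omega(z)=-\tfrac1\pi\bigl(m_3z^2+m_2z+m_1\bigr).
\]
Subtracting this decaying remainder from $S$ in the identity $\int_\Omega f\,\dd A=\frac{1}{2i}\oint_{\partial\Omega}fS\,\dd z$ and using $f(\infty)=0$ shows that $\Omega$ satisfies \eqref{QI2} with quadrature function $r_\Omega$; that is, $\Omega$ is a UQD with the quadratic polynomial quadrature function $h:=r_\Omega$.

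Finally I would close the argument. The same computation applies verbatim to $\tilde\Omega$, and since $\Omega$ and $\tilde\Omega$ share the moments $m_1,m_2,m_3$ they share the very same $h$; they also share $m_0=A(\Omega^c)=A(\tilde\Omega^c)$. Being a single (Jordan, hence simply connected) quadrature domain, $\Omega^c$ is a local droplet of the algebraic potential with quadrature function $h$ by the converse in \eqref{HS-QD}, and likewise for $\tilde\Omega^c$. Both are local droplets of the same area with the same quadratic $h$, so Theorem \ref{thm-theyare} gives $\Omega^c=\tilde\Omega^c$, i.e. $\Omega=\tilde\Omega$. The crux of the whole proof is the passage from ``vanishing moments'' to ``holomorphic Schwarz function with a double pole at infinity''; once $\Omega$ is identified as a quadrature domain with the explicitly determined $h$, uniqueness is a direct citation of Theorem \ref{thm-theyare}.
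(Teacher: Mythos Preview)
Your argument is correct and terminates in the same way as the paper's, by invoking Theorem~\ref{thm-theyare} once both $\Omega$ and $\tilde\Omega$ are identified as UQDs with a common quadratic quadrature function and common area $m_0$. The difference lies in how you reach that identification. The paper simply writes down the candidate $r(z)=\tfrac1\pi\sum_{k\ge1}m_kz^{k-1}$ and checks the quadrature identity \eqref{QI2} on the test functions $f(z)=z^{-k}$, which suffice because $\Omega$ is a Jordan domain: the left side is $m_k$ by definition and the right side is $m_k$ by a one-line residue computation. You instead construct the Schwarz function from a regularized Cauchy transform, show it is holomorphic in the finite part of $\Omega$, and read off its Laurent coefficients at infinity from the relation $m_k=-\pi s_{k-1}$, so that the vanishing of $m_k$ for $k\ge4$ kills $s_j$ for $j\ge3$ and forces a double pole. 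Your route is longer but makes the mechanism transparent; the paper's is a two-line verification. One small slip: $\bar\partial_w\tilde\phi=-\pi\,\mathbf 1_\Omega$ rather than $-\mathbf 1_\Omega$, so $\mathcal G$ should be $\tilde\phi/\pi+\bar w$; with that correction your displayed $r_\Omega=-\tfrac1\pi(m_3z^2+m_2z+m_1)$ is consistent.
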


\begin{proof} Denote
\begin{equation*}
	r(z)=\frac{1}{\pi}\sum_{k\geq 1} m_k z^{k-1},
\end{equation*}
so $r$ is a quadratic polynomial.  We claim that $\Omega$ is an unbounded quadrature domain with quadrature function $r$ and the same is true for $\widetilde\Omega$.  According to the definition of unbounded quadrature domains \eqref{QI2}, we need to check that
\begin{equation}\label{fAfr}
	\int_\Omega f(z) \,\dd A(z)=\frac{1}{2\ii} \oint_{\partial\Omega} f(z)\,  r(z) \,\dd z
\end{equation}
for all $f\in C_A(\Omega)$ such that $f(\infty)=0$.  Since $\Omega$ is a Jordan domain, it is sufficient to do so for $f(z)=z^{-k}$ with $k\geq 1$. In this case  the left hand side in \eqref{fAfr} is $m_k$ by definition, and the right hand side is $m_k$ by residue calculus.

It follows that $K=\Omega^c$ and $\widetilde K=\widetilde \Omega^c$ are algebraic droplets with the same quadrature function, which is a quadratic polynomial.  Since the droplets have the same area  $m_0$, we have $K=\widetilde K$ by Theorem \ref{thm-theyare}.
\end{proof}

\subsection{Topology of algebraic droplets}

Let $K\subset\CC$ be a compact set such that $K=\clos\inte K$ and $\partial K$ is a finite union of disjoint simple curves. We will call such curves  {\it ovals} and $\partial K$ a {\it configuration} of ovals. Clearly, any collection of disjoint simple curves is a configuration of ovals; the set $K$ is determined by these curves uniquely. Two configurations of ovals are topologically {\it equivalent} if there is a homeomorphism of $(\widehat\CC,\infty)$ that maps the ovals to the ovals. Our goal  is to describe, in the spirit of Hilbert's 16th problem \cite{Wilson}, all possible configurations of ovals that can occur   in the case  of algebraic droplets  of  a given degree. (By definition the degree of a  droplet is the degree of its quadrature function.)

Let us denote by $q=q(K)$ the number of  components of the complement  $\widehat\CC\setminus K$, and by $q_j$  the number of components    of  connectivity $j$, ($j\ge1$). For example,  we have $q_1=2$ and $q_2=1$ if $\partial K$ is the configuration of 4 concentric circles, and $q_1=1$ and $q_2=2$ in the case of 5 concentric circles; in both cases, $q_{\ge3}=0$. Clearly, we have
$$q=\sum q_j,$$
and we will also write
$$q_\text{odd}=q_1+q_3+q_5+\cdots. $$

\bigskip
\begin{thm}\label{thm-B} (i)
Let $K$ be  a local  droplet of an algebraic Hele-Shaw potential $Q(z)=|z|^2-H(z)$ such that the (rational) quadrature function $h=\partial H$ has degree $d$. Assume that the boundary, $\partial K$, is smooth (i.e. $\partial K$ is a configuration of ovals). Then
\begin{equation}\label{ovals}
 \#(\text{ovals})  + q_\text{odd} +4 (q-q_1)\le 2d+2.
\end{equation}
(ii) Given $d\geq 0$ and given a configuration of ovals satisfying \eqref{ovals}, there exists a local droplet $K$ of some algebraic potential of degree $d$ such that
 $\partial K$ is equivalent to the given configuration.
\end{thm}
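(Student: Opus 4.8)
The plan is to read off, via the droplet--quadrature-domain dictionary \eqref{HS-QD}, the decomposition of the complement and then to feed it into Theorem~\hyperref[thmA]{A} for the bound (i), and into Theorems~\ref{thm-B1}--\ref{thm-B2} for the realization (ii). For (i), by \eqref{HS-QD} the complement splits as $\widehat\CC\setminus K=\bigsqcup_j\Omega_j$ into finitely many quadrature domains, exactly one of which, say $\Omega_0$, is unbounded, and $h=\sum_j r_{\Omega_j}$. Since the $\Omega_j$ are disjoint, the finite poles of the summands sit at distinct points and the pole at $\infty$ can come only from $\Omega_0$; the principal parts therefore do not cancel and the degrees add,
\begin{equation*}
d=\deg h=\sum_j d_{\Omega_j}.
\end{equation*}
Moreover each oval of $\partial K$ is a boundary curve of exactly one component (it has $K$ on one side and a single component of $K^c$ on the other), so $\#(\text{ovals})=\sum_j\conn\Omega_j$. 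Grouping the left side of \eqref{ovals} by components, a component of connectivity $c$ contributes $\Phi(c):=c+\varepsilon(c)+4\cdot\mathbf 1_{\{c\ge2\}}$, with $\varepsilon(c)=1$ for $c$ odd and $0$ for $c$ even. It thus suffices to prove $\Phi(\conn\Omega_j)\le 2d_{\Omega_j}$ for every bounded component and $\Phi(\conn\Omega_0)\le 2d_{\Omega_0}+2$ for the unbounded one; summing then gives exactly $2d+2$.

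For a bounded $\Omega_j$ with $\conn\Omega_j\ge2$ one has $d_{\Omega_j}\ge3$, so Theorem~\ref{thm-A2} yields $\conn\Omega_j\le 2d_{\Omega_j}-4$; when $\conn\Omega_j$ is odd this improves to $2d_{\Omega_j}-5$ by parity, and in either case $\Phi(\conn\Omega_j)\le 2d_{\Omega_j}$, while $\Phi(1)=2$ covers the simply connected case. For the unbounded $\Omega_0$ with $\conn\Omega_0\ge2$ one has $d_{\Omega_0}\ge2$, and Theorem~\ref{thm-A1} gives $\conn\Omega_0\le 2d_{\Omega_0}-2$ (one less when the connectivity is odd), whence $\Phi(\conn\Omega_0)\le2d_{\Omega_0}+2$; again $\Phi(1)=2$ is harmless. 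This proves \eqref{ovals}. Note that only the ``$2d-\text{const}$'' halves of Theorem~\hyperref[thmA]{A} enter, the parity term $q_{\text{odd}}$ being precisely what the odd-connectivity improvement pays for.

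For (ii), given a configuration of ovals obeying \eqref{ovals} I would read off the prospective components $\Omega_j$, their connectivities $c_j$, and their nesting. The computation in (i) identifies the minimal order needed to realize connectivity $c$ through $2d^{\min}_{\text{bdd}}(c)=\Phi(c)$ and $2d^{\min}_{\text{unb}}(c)=\Phi(c)-2$, so that \eqref{ovals} is exactly the inequality $\sum_j d^{\min}(c_j)\le d$. I would then realize each component as an honest quadrature domain of connectivity $c_j$ and some order $d_j\ge d^{\min}(c_j)$ via Theorems~\ref{thm-B1}--\ref{thm-B2} (choosing the node multiplicities so that the relevant minimum there equals $c_j$), distributing the surplus $d-\sum_j d^{\min}(c_j)\ge0$ among the $d_j$ so that $\sum_j d_j=d$. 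Finally I would place these domains disjointly in $\widehat\CC$ by affine maps --- quadrature domains are preserved by scalings and translations --- inserting each subtree of the nesting into a complementary region of its parent, shrunk enough to leave a collar; then $K:=(\bigsqcup_j\Omega_j)^c$ is compact with $\partial K$ equivalent to the prescribed configuration, and by the converse half of \eqref{HS-QD} it is a local droplet of an algebraic potential whose quadrature function $\sum_j r_{\Omega_j}$ has degree $d$.

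The main obstacle is in part (ii): realizing each prescribed pair (connectivity $c_j$, order $d_j$) as a genuine quadrature domain for \emph{every} admissible $d_j$, not only for the extremal orders produced directly by Theorems~\ref{thm-B1}--\ref{thm-B2}. I expect to absorb the surplus by raising the order one unit at a time without altering the topology --- for instance by adjoining a small node deep in the interior of a component (a partial-balayage/Hele-Shaw perturbation, cf. Section~\ref{sec-HSflow}), which enlarges the domain slightly, increases the order by one, and preserves the connectivity. The accompanying geometric claim, that an arbitrary nesting forest can be realized by recursively inserting affinely shrunk quadrature domains into the complementary regions of their parents, is intuitively clear but must be checked carefully to ensure that the components of the final complement are exactly the $\Omega_j$ and that $\partial K$ has the intended topological type.
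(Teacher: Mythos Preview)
Your proof of (i) is correct and is essentially the paper's argument repackaged: the paper also decomposes $K^c$ into quadrature domains via Theorem~\ref{thm-HS-QD}, rewrites the bounds $c\le 2d_\Omega-4$ (bounded) and $c\le 2d_\Omega-2$ (unbounded) from Theorem~\hyperref[thmA]{A} as lower bounds $d_\Omega\ge 3+\lfloor(c-1)/2\rfloor$ and $d_\Omega+1\ge 3+\lfloor(c-1)/2\rfloor$, and sums. Your function $\Phi(c)$ is just $2\bigl(3+\lfloor(c-1)/2\rfloor\bigr)$ for $c\ge2$, so the two computations are identical.

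For (ii) the outline is the same but the execution differs in two places. First, the paper does not invoke Theorems~\ref{thm-B1}--\ref{thm-B2} (which are deferred to the companion paper) but proves directly, as Lemma~\ref{lemma-maximal}, the existence of non-singular quadrature domains of each connectivity $c$ at the \emph{minimal} order, via explicit Apollonian-type packings of discs, cardioids and ellipses followed by a short Hele-Shaw perturbation; this keeps the argument self-contained. Second, and more importantly, the paper handles the degree surplus globally rather than locally: it assembles every component at its minimal order, obtaining a droplet $K'$ of degree $d'\le d$, and then simply passes to a droplet in the Hele-Shaw chain of $K'$ with $d-d'$ additional finite sources, which raises the degree to $d$ without changing the topology. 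This single step dissolves exactly the obstacle you flagged --- there is no need to realize individual components at non-extremal orders or to add nodes one at a time. Your proposed fix (adjoining small interior nodes via partial balayage) would also work, but the paper's global move is cleaner.
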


The proof will be given in Section \ref{sec-proof-thmC}.

{\bf Examples.} The boundary of an algebraic droplet of degree $d$ can be equivalent to the configuration of 4 concentric circles if and only if $d\ge 4$ because in this case we have
$$\#(\text{ovals})  + q_\text{odd} +4 (q-q_1)=4+2+4(3-2)=10.$$
In the case of 5 concentric circles we have
$$\#(\text{ovals})  + q_\text{odd} +4 (q-q_1)=5+1+4(3-1)=14,$$
and such a configuration is possible if and only if $d\ge6$.

In Figure \ref{fig-2} we display a complete list of possible oval configurations corresponding to algebraic droplets of degree $\le4$.

\begin{figure}[ht]
\begin{center}
\includegraphics[width=0.9\textwidth]{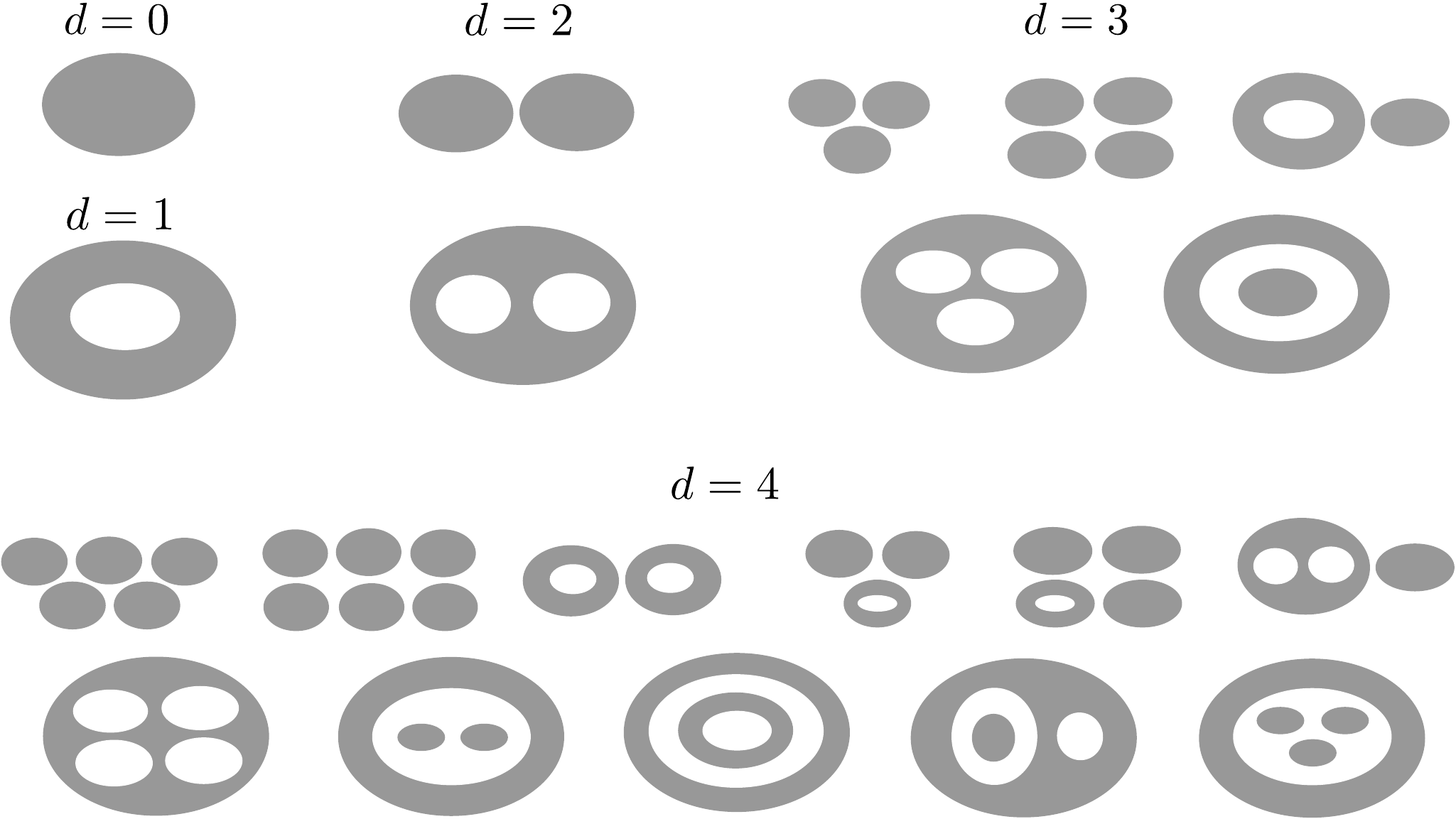}
\caption{\label{fig-2} Topological classification of algebraic droplets according to  the minimal degree  of the quadrature function}
\end{center}
\end{figure}

\bigskip
\begin{cor}\label{cor-R16} Let $K$ be an algebraic  droplet (with smooth boundary)  of degree $d\ge3$.     Then
$$\#(\text{ovals})\leq 2d-2\quad \text{ for } d\geq 3.$$
\end{cor}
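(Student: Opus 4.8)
The plan is to read the corollary off Theorem \ref{thm-B}(i). Writing $N := \#(\text{ovals})$, that theorem gives
\[
N + q_\text{odd} + 4(q - q_1) \le 2d + 2 ,
\]
so it suffices to show that the correction term $q_\text{odd} + 4(q-q_1)$ is at least $4$, which would upgrade the bound $2d+2$ to $2d-2$. I would split into two cases according to whether $\widehat\CC\setminus K$ has a multiply connected component.

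If some component of $\widehat\CC\setminus K$ has connectivity $\ge 2$, then $q - q_1 \ge 1$, hence $q_\text{odd} + 4(q-q_1) \ge 4$, and Theorem \ref{thm-B}(i) directly yields $N \le 2d - 2$. This case is immediate.

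The substantive case is when every component of $\widehat\CC\setminus K$ is simply connected. Then the correction term collapses to $q_\text{odd} = q_1 = q$, which can be as small as $1$, so Theorem \ref{thm-B}(i) is no longer enough and I would instead use the quadrature-domain structure of the droplet. Since each complementary component is then bounded by a single oval, and each oval bounds exactly one complementary component, we get $N = q$. By the droplet–quadrature-domain correspondence \eqref{HS-QD}, $K^c=\bigsqcup_j\Omega_j$ with $h=\sum_j r_{\Omega_j}$; exactly one $\Omega_j$ contains $\infty$, and the remaining $q-1$ are \emph{bounded} quadrature domains. As there are no bounded quadrature domains of order $0$, each of these contributes at least one finite pole to $h$, and these poles lie in disjoint domains and so cannot cancel; therefore $d=\deg h \ge q-1$. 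Together with the hypothesis $d\ge 3$ this gives $d\ge\max(3,\,q-1)$, and the elementary inequality $q\le 2\max(3,\,q-1)-2$ (valid for all $q\ge 1$) yields $N=q\le 2d-2$.

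The only real obstacle is this last case: the inequality of Theorem \ref{thm-B}(i) is not by itself sharp enough once all complementary components are simply connected, and one must supplement it with the geometric count $N=q$ and the order bound $d\ge q-1$ coming from the bounded quadrature pieces (and from the nonexistence of order-$0$ bounded quadrature domains). The hypothesis $d\ge 3$ is used precisely to handle the small values $q\le 3$, where $d\ge q-1$ alone is too weak.
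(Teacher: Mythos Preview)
Your proof is correct and follows essentially the same two-case split as the paper. In the case $q>q_1$ both arguments are identical. In the case $q=q_1$ the paper simply substitutes $q_\text{odd}=q=N$ and $q-q_1=0$ back into \eqref{ovals} to obtain $2N\le 2d+2$, i.e.\ $N\le d+1$, and then observes $d+1\le 2d-2$ for $d\ge 3$; you instead re-derive $q\le d+1$ directly from the quadrature-domain decomposition (each of the $q-1$ bounded pieces has order $\ge 1$), which is exactly the counting that underlies \eqref{ovals} in this simply-connected situation. Your final step $q\le 2\max(3,q-1)-2$ is just a repackaging of $d+1\le 2d-2$. Incidentally, the paper's separate appeal to the classification in Figure~\ref{fig-2} for $d=3$ is not actually needed, since $d+1=2d-2$ there; your argument handles $d=3$ uniformly.
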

\begin{proof}
Indeed, this is immediate from \eqref{ovals} if $q>q_1$.  Otherwise, $q=q_1=q_{\rm odd}$, and  since
 $\widehat\CC\setminus K$ has only simply-connected components, each oval corresponds to a single component of $K^c$, so $\#(\text{ovals})=q$.
We have
\begin{equation*}
2\,\#(\text{ovals})= \#(\text{ovals})+ q=\#(\text{ovals})  + q_\text{odd} +4 (q-q_1)\le2d+2,
\end{equation*}
and $\#(\text{ovals})\le d+1<2d-2$ if $d>3$.   For $d=3$ the estimate follows from the classification in Figure \ref{fig-2}.
\end{proof}

{\bf Remark.}
One can state more detailed results that take multiplicities of the poles of $h$ into account. In particular, if $h$ is a polynomial of degree $d\ge2$, then
the number of ovals is $\le d-1$. This is of course just a reformulation of Corollary \ref{cor1}.

\subsection{Hele-Shaw flow of algebraic droplets}\label{sec-HSflow}

If $K$ is an algebraic droplet (or, more generally, a local droplet of some Hele-Shaw potential) of area $\pi t_0$, then the family
$$K_t:=S_t[Q_K], \qquad (0<t\le t_0),$$
(see \eqref{St} and \eqref{local} for the meaning of $S_t[\,\cdot\,]$ and $Q_K$) is a unique {\em generalized} solution of the {\em Hele-Shaw equation} with sink at infinity:
\begin{equation*}
\frac1\pi\frac d{dt} 1_{K_t}=\omega^\infty_{K_t},\qquad K_{t_0}=K,
\end{equation*}
where $\omega^\infty$ is the harmonic measure evaluated at infinity.  The equation is understood in the sense of integration against test functions.  The family $\{K_t\}$ is called the Hele-Shaw chain of $K$; note that the mass $t$ becomes the ``time'' parameter of the ``flow''.
In the ``classical" case where $\{\partial K_t\}$ is a smooth family of smooth curves, the Hele-Shaw equation means
\begin{equation}\label{Darcy}
V_n(\cdot)=  \nabla G(\,\cdot\,,\infty)\quad \text{on} ~\partial K_t,
\end{equation}
where $V_n$ is the normal velocity of the boundary and $G$ is the Green function with Dirichlet boundary condition on $\partial K_t$.
In 2D hydrodynamics, the classical Hele-Shaw equation \eqref{Darcy} (also known as Darcy's law) describes the motion of the boundary between two immiscible fluids, see \cite{Gustafsson-Vasiliev} for references.

By construction, all droplets in the Hele-Shaw chain of an algebraic droplet have the {\em same quadrature function}.  This simple fact will be useful in perturbation arguments.

{\bf Example.} Suppose we have $m$ disjoint open discs $B_j=B(a_j, \rho_j)$ inside a closed disc $\overline B=\clos B(a, \rho)$ as in the left picture in Figure \ref{fig-packing}. Denote $$K=\overline B\setminus \bigcup B_j$$ and let $c$ be the number of components of the interior of $K$; e.g. $m=9$, $c=16$ in the picture.
Then $K$ is an algebraic droplet of degree $d=m$ with quadrature function
\begin{equation*}
h(z)=\overline a+\sum_{j=1}^m\frac{\rho^2_j}{z-a_j},
\end{equation*}
see \eqref{extcircle} and \eqref{HS-QD}, and this quadrature function does not change under the Hele-Shaw flow $\{K_t\}$, $K_{t_0}=K$.  From basic properties of Hele-Shaw chains (see the review in Section \ref{sec-HSdetail}) it follows that if $t<t_0$ is sufficiently close to $t_0$, then $K_t$ has at least $c$ simply-connected components so $\widehat\CC\setminus K_t$ is an unbounded quadrature domain of connectivity $\geq c$.  Theorem \ref{thm-A1} then gives us the following (sharp) bound for this circle packing problem:
\begin{equation*}
c\leq 2m-2.
\end{equation*}
Of course, there are other ways to obtain this result; e.g. apply Bers' area theorem to the corresponding reflection group.  However, it is less clear whether such alternative arguments extend to the case of more general packing problems like the one depicted in the right picture of Figure \ref{fig-packing}.

\bigskip
\begin{cor} Suppose we have $m$ disjoint open cardioids inside an ellipse, and let $c$ be the number of components in the interior of the complement.  Then $c\leq 3m$.
\end{cor}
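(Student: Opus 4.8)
The plan is to follow the template of the circle-packing example from the preceding text, replacing the order-$0$ exterior disc by the order-$1$ exterior ellipse and the order-$1$ discs by the order-$2$ cardioids. The crucial difference is that the exterior ellipse forces a node at infinity, so we will be able to invoke the sharper bound \eqref{eq-thma2} rather than \eqref{eq-thma1}.

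First I would set $K=\clos E\setminus\bigcup_{j=1}^m C_j$, where $E$ is the open ellipse and $C_1,\dots,C_m$ are the disjoint cardioids, so that $c$ is the number of components of $\inte K$. Then
\[
K^c=E^{\text{ext}}\sqcup\bigsqcup_{j=1}^m C_j
\]
is a disjoint union of quadrature domains: the exterior ellipse $E^{\text{ext}}$ is a UQD of order $1$ whose single node sits at $\infty$, and each cardioid $C_j$ is a BQD of order $2$ with a single (double) node at a finite point $a_j\in C_j$. By \eqref{HS-QD} the quadrature function of $K$ is $h=r_{E^{\text{ext}}}+\sum_{j=1}^m r_{C_j}$, where $r_{E^{\text{ext}}}(z)=\alpha z+\beta$ with $\alpha\neq0$ (an order-$1$ polynomial, pole at $\infty$), and each $r_{C_j}$ is a rational function of degree $2$ with a double pole at $a_j$ that vanishes at $\infty$.

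Next I would read off the invariants of $h$. Putting $h$ over the common denominator $\prod_j(z-a_j)^2$ of degree $2m$, the numerator has degree $2m+1$ (leading term $\alpha z^{2m+1}$) and, since the double pole at each $a_j$ survives, does not vanish at any $a_j$; hence $\deg h=2m+1$. The distinct poles are $a_1,\dots,a_m$ together with $\infty$, so $n=m+1$, and $h$ genuinely has a pole at $\infty$ because $\sum_j r_{C_j}$ vanishes there. I would then apply the Hele-Shaw perturbation of Section \ref{sec-HSflow}: flowing $K_{t_0}=K$ to $K_t$ with $t<t_0$ close to $t_0$ separates $\inte K$ into exactly $c$ components while preserving $h$, so that $\Omega:=(K_t)^c$ is a \emph{single} unbounded quadrature domain with $\conn\Omega=c$, order $d_\Omega=2m+1\ge3$, $n_\Omega=m+1$, and a node at $\infty$. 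The node-at-infinity bound \eqref{eq-thma2} of Theorem \ref{thm-A1} then gives
\[
c=\conn\Omega\;\le\;d_\Omega+n_\Omega-2=(2m+1)+(m+1)-2=3m.
\]

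The order/node bookkeeping and the degree count are elementary once one knows that an exterior ellipse has order $1$ with node at $\infty$ and that a cardioid has order $2$ with a double node. The main obstacle is the Hele-Shaw step: one must verify that for $t$ slightly below $t_0$ the droplet $K_t$ splits into exactly $c$ components, so that $(K_t)^c$ is connected with connectivity $c$; this relies on the structural properties of Hele-Shaw chains developed in Section \ref{sec-HSdetail}. The one quantitative point worth flagging is that the general bound \eqref{eq-thma1} would only yield $\min(3m+1,\,4m)=3m+1$; it is precisely the node at infinity, contributed by the exterior ellipse, that upgrades the estimate to the sharp value $3m$.
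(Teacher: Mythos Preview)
Your proposal is correct and follows essentially the same approach as the paper: identify $K$ as an algebraic droplet with quadrature function of degree $d=2m+1$ and $n=m+1$ distinct poles (one at $\infty$ coming from the exterior ellipse), apply a small Hele-Shaw perturbation to obtain a connected unbounded quadrature domain of connectivity $c$, and conclude via \eqref{eq-thma2}. The paper's proof is just a terse three-line version of what you wrote; your expanded bookkeeping on the degree of $h$ and the explicit observation that \eqref{eq-thma1} alone would only give $3m+1$ are accurate and helpful elaborations.
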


E.g. $m=7$ and $c=21$ at the right picture in Figure \ref{fig-packing}.

\begin{proof}
The quadrature function of $K$ has degree $d=1+2m$, and there are $n=m+1$ distinct poles, one of which is at infinity.  Applying \eqref{eq-thma2} in Theorem \ref{thm-A1} to some (small) Hele-Shaw perturbation of $K$ we get $c\leq d+n-2=3m$.
\end{proof}

\begin{figure}[ht]
\begin{center}
\includegraphics[width=0.8\textwidth]{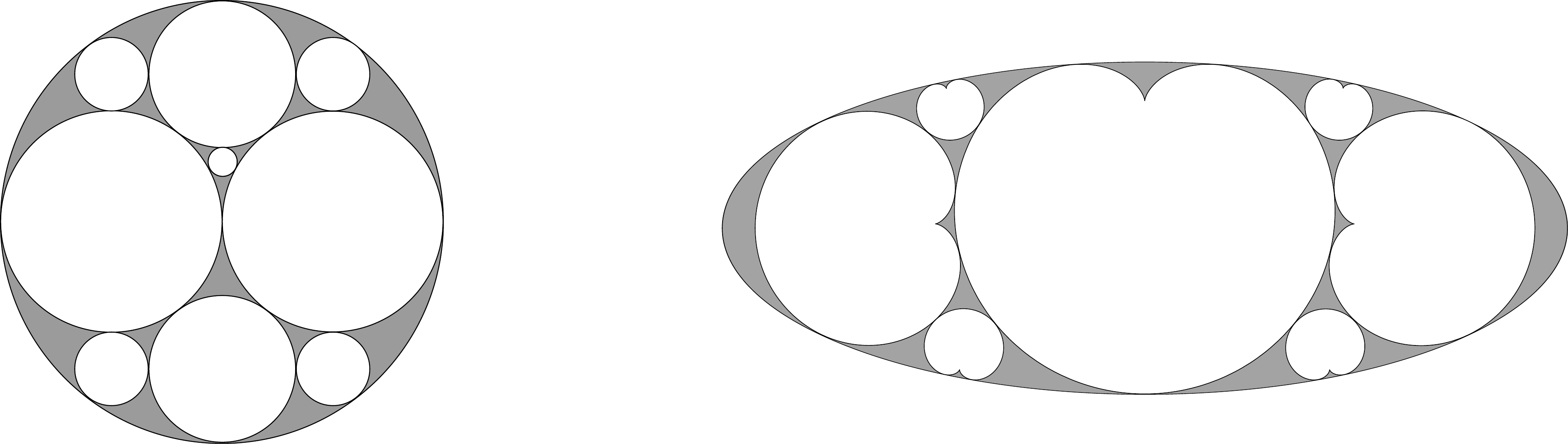}
\caption{\label{fig-packing} Packing problems}
\end{center}
\end{figure}

\section{Quadrature domain decomposition  of the complement of an algebraic droplet}\label{sec-4}

Here we explain the relation between algebraic droplets and quadrature domains.  The argument is based on the Aharonov-Shapiro characterization of quadrature domains in terms of the Schwarz function \cite{AS76}. Since the Schwarz function will be our main tool in the proof of Theorem \hyperref[thmA]{A}, we will also recall some basic facts of its theory.

\subsection{Schwarz function}\label{sec-Schwarz}

Let $\Omega\subsetneqq\widehat\CC$ be a domain such that
$\infty\notin\partial\Omega$ and $\Omega=\inte\clos\Omega.$
By definition, a continuous function
 $$S:\clos\Omega\to\widehat\CC$$ is a  {\em Schwarz function} of $\Omega$ if
 $S$ is meromorphic in $\Omega$ and
 $$S(z)=\overline z\quad{\rm on }~ \partial\Omega.$$
It is clear that, given $\Omega$, if such a function $S$ exists, then
it is unique.  

Our definition deviates from the primary references \cite{Davis,Shapiro92} where Schwarz function is just required to be analytic near $\partial\Omega$. A more accurate (but rather long) name would be a {\em Schwarz function that has a meromorphic extension to $\Omega$}.

For a Borel set  $E\subset\CC$ with a compact boundary
we denote by $C^E$ the Cauchy transform of the area measure of $E$,
\begin{equation*}
 C^E(z)=\frac{1}{\pi}\int_E k_z(w)\,\dd A(w),\qquad k_z(w):=\frac{1}{z-w}.
\end{equation*}
As usual,  we understand the integral in the sense of principal value if $E$ is unbounded, e.g.
\begin{equation*}
C^{\CC}(z)=\overline z.
\end{equation*}
Indeed, if
 $R>|z|$, then
\begin{equation*}
\frac{1}{\pi}\int_{|w|\leq R}\frac{\dd A(w)}{z-w}-\overline z=
\frac{1}{\pi}\int_{|w|\leq R}\overline\partial_w\left(\frac{\overline w}{z-w}\right)\dd A(w)
= \frac{1}{2\pi\ii}\oint_{|w|=R}\frac{\overline w}{w-z}\dd w= \frac{1}{2\pi\ii}\oint_{|w|=R}\frac{R^2/w}{w-z}\dd w=0,
\end{equation*}
where we used the formula $\bar\partial k_z=-\pi\delta_z$.

The following characterization of quadrature domains is well-known, see Lemma 2.3 in \cite{AS76}. We will nonetheless outline the proof because it will give us an expression for the Schwarz function, see \eqref{SrC} below, that we will repeatedly use later.

\bigskip
\begin{lemma}\label{lem-S} $\Omega$ is a quadrature domain if and only if $\Omega$ has a Schwarz function. In this case we have
the identity
\begin{equation}\label{SrC}
S(z)=r_\Omega(z)+C^{\Omega^c}(z),\qquad z\in \clos\Omega.
\end{equation}
\end{lemma}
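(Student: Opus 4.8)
The plan is to prove the two implications separately and to obtain the identity \eqref{SrC} as a byproduct of the construction in the harder (``only if'') direction. For the easy direction, suppose $\Omega$ has a Schwarz function $S$. The starting point is Green's theorem in the form
\begin{equation*}
\int_\Omega f\,\dd A=\frac{1}{2\ii}\oint_{\partial\Omega} f(z)\,\overline z\,\dd z,\qquad f\in C_A(\Omega),
\end{equation*}
which follows from $\overline\partial(f\overline z)=f$ together with $\dd\overline z\wedge \dd z=2\ii\,\dd A$. Since $S=\overline z$ on $\partial\Omega$, I may replace $\overline z$ by $S(z)$ in the contour integral. Now $S$ is meromorphic in $\Omega$ and continuous on the compact set $\clos\Omega$, so its poles cannot accumulate at $\partial\Omega$ and there are only finitely many of them, say $a_1,\dots,a_N$ of orders $m_k+1$. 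Residue calculus then gives
\begin{equation*}
\int_\Omega f\,\dd A=\frac{1}{2\ii}\oint_{\partial\Omega} f(z)\,S(z)\,\dd z=\pi\sum_{k}\res_{z=a_k}\bigl(f(z)S(z)\bigr),
\end{equation*}
and expanding each residue in terms of $f(a_k),f'(a_k),\dots,f^{(m_k)}(a_k)$ produces a quadrature identity of the form \eqref{QI}; hence $\Omega$ is a quadrature domain.

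For the converse, suppose $\Omega$ is a quadrature domain with quadrature function $r=r_\Omega$, and define $S:=r+C^{\Omega^c}$ on $\clos\Omega$. Since all poles of $r$ lie in $\Omega$, and $C^{\Omega^c}$ is analytic in $\Omega$ (the integrand $k_z(w)$ has no singularity for $z\in\Omega$ and $w\in\Omega^c$) and continuous up to the algebraic boundary, $S$ is meromorphic in $\Omega$ and continuous on $\clos\Omega$. It remains to check that $S=\overline z$ on $\partial\Omega$. Using the splitting $\overline z=C^{\CC}(z)=C^{\Omega}(z)+C^{\Omega^c}(z)$, this is equivalent to the identity $r(z)=C^{\Omega}(z)$ for $z\in\partial\Omega$. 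To prove it, I fix $w$ outside $\clos\Omega$ and apply the quadrature identity \eqref{QI1} to $f=k_w\in C_A(\Omega)$, which yields $\pi\,C^{\Omega}(w)=\tfrac{1}{2\ii}\oint_{\partial\Omega}r(z)/(w-z)\,\dd z$. The integrand is rational; its only poles inside $\Omega$ are those of $r$, so the contour integral equals $\pi$ times the sum of the residues of $r(z)/(w-z)$ at the poles of $r$. Since the total sum of residues of a rational function on $\widehat\CC$ vanishes, and since $r(\infty)=0$ forces the residue at infinity to be zero while the residue at $z=w$ equals $-r(w)$, that sum equals $r(w)$. Hence $C^{\Omega}(w)=r(w)$ throughout the exterior, and by continuity of $C^\Omega$ across $\partial\Omega$ the required boundary identity follows. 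This simultaneously shows that $S$ is the Schwarz function of $\Omega$ and establishes \eqref{SrC}.

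The main obstacle is the bookkeeping in this last step, namely verifying $r=C^\Omega$ in the exterior. Everything hinges on the two defining normalizations of the quadrature function (all poles inside $\Omega$ and $r(\infty)=0$), which make the sum-of-residues argument close up exactly. Two further points require care: the continuity of the Cauchy transform $C^\Omega$ up to $\partial\Omega$, so that an identity on the open exterior passes to the boundary; and, in the unbounded case $\infty\in\Omega$, the principal-value interpretation of all integrals together with the normalization $f(\infty)=0$ from \eqref{QI2}, which controls the contributions at infinity in both Green's theorem and the residue count. These are precisely the places where the conventions set up earlier in the section, the formula $\overline\partial k_z=-\pi\delta_z$ and $C^{\CC}=\overline z$, do the work.
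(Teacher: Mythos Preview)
Your proof is correct and follows essentially the same route as the paper for the ``only if'' direction: apply the quadrature identity to the Cauchy kernels $k_w$ with $w$ in the exterior to obtain $C^\Omega=r_\Omega$ there, then pass to the boundary by continuity and conclude that $r_\Omega+C^{\Omega^c}$ is the Schwarz function. The one structural difference is in the ``if'' direction. The paper first defines $r$ as the rational function matching the poles and principal parts of $S$ (with the appropriate normalization at $\infty$), then computes $C^{\Omega^c}$ via Green's theorem on the \emph{bounded} set $\Omega^c$ to obtain $S=r+C^{\Omega^c}$ directly, and finally reads off the quadrature identity from this decomposition; thus \eqref{SrC} is derived in that direction and the identification $r=r_\Omega$ is immediate. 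You instead run Green's theorem on $\Omega$ and invoke residue calculus to get a quadrature identity straight away, deferring \eqref{SrC} to the other direction (which is fine, by uniqueness of the Schwarz function). Your shortcut is slightly slicker for bounded $\Omega$; the paper's version has the advantage that working on $\Omega^c$ sidesteps the principal-value bookkeeping at infinity in the unbounded case, which you flag but do not carry out.
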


\begin{proof} Suppose first that $\Omega$ has a Schwarz function.
Since  $S$ is continuous up to the boundary and is finite  on $\partial \Omega$, there are only  finitely many  poles  of $S$ inside $\Omega$. Let us define $r$ as a (unique) rational function which has exactly the same poles and the same principal parts at the poles as $S$, and which satisfies $r(\infty)=0$ if $\Omega$ is bounded and
\begin{equation}\label{Sr0}
\lim_{z\to\infty} (S(z)-r(z))=0
\qquad {\rm if}\; \infty\in \Omega.
\end{equation}
 We will discuss  the {\it unbounded case}, $\infty\in \Omega$; the argument in the case of bounded domains is
 similar.

For each $z\in\Omega$ we have
\begin{equation*}
C^{\Omega^c}(z)=\frac{1}{\pi}\int_{\Omega^c} \frac{\dd A(w)}{z-w}=\frac{1}{2\pi\ii}\int_{\partial\Omega} \frac{\overline w}{w-z}\dd w=\frac{1}{2\pi\ii}\int_{\partial\Omega} \frac{S(w)-r(w)}{w-z}\dd w+\frac{1}{2\pi\ii}\int_{\partial\Omega} \frac{r(w)}{w-z}\dd w.
\end{equation*}
Here we used the fact that the boundary of $\Omega$ is rectifiable; this follows for example from Sakai's regularity theorem, which we recall in Section \ref{sec-Sakai}.
The first integral in the last expression is equal to $S(z)-r(z)$ because by \eqref{Sr0} the residue at infinity is zero.
The second integral is equal to zero -- we just apply Cauchy's theorem in each component of  the interior of $\Omega^c$. It follows that
$$S=r+C^{\Omega^c}\qquad
 {\rm inside}\;\Omega.$$ Since the Cauchy transform $C^{\Omega^c}$ is continuous in $\CC$, the identity extends to the boundary, and we have the following quadrature identity for all $f\in C_A(\Omega)$ satisfying $f(\infty)=0$:
\begin{equation*}\begin{split}
\int_\Omega f\,\dd A&=\frac{1}{2\ii} \oint_{\partial\Omega} \overline z\,f(z)\,\dd z=\frac{1}{2\ii} \oint_{\partial\Omega} S(z)\,f(z)\,\dd z
\\
&= \frac{1}{2\ii} \oint_{\partial\Omega} r(z)\,f(z)\,\dd z+\frac{1}{2\ii} \oint_{\partial\Omega} C^{\Omega^c}(z)\,f(z)\,\dd z
=\frac{1}{2\ii} \oint_{\partial\Omega} r(z)\,f(z)\,\dd z
\end{split}
\end{equation*}
because the function $C^{\Omega^c}(z) \,f(z)$ has a double zero at infinity.   It follows that $\Omega$ is a quadrature and $r$ is its quadrature function,  $$r_\Omega=r.$$

In the opposite direction, let us assume that $\Omega$ is a quadrature domain
and let us apply the quadrature identity \eqref{QI2} to the Cauchy kernels
$f=k_z$ with $z$ in the interior of $\Omega^c$.  Then
\begin{equation}\label{1ststep}
C^\Omega(z) = \frac{1}{\pi}\int_{\Omega} k_z\,\dd A=\frac{1}{2\pi\ii}\oint_{\partial\Omega} \frac{r_\Omega(w)}{z-w}\,\dd w = r_\Omega(z).
\end{equation}
By continuity of $C^\Omega$, we have
\begin{equation*}
 \forall z\in\partial\Omega,\qquad r_\Omega(z) +C^{\Omega^c}(z)=C^{\widehat\CC}(z)=\overline z,
\end{equation*}
which means that   $S := r +C^{\Omega^c}$ is the Schwarz function of $\Omega$.
\end{proof}

\subsection{Sakai's regularity theorem}\label{sec-Sakai}

Let $\Omega\subset\CC$ be an open set (not necessarily connected). A boundary point
$p\in\partial \Omega$ is called {\it regular} if there is a disc $B=B(p,\epsilon)$ such that $\Omega\cap B$ is a Jordan domain and $\partial \Omega\cap B$ is
a simple real analytic arc; otherwise $p$ is a {\it singular} boundary point.

We note two special types of singular points: $p\in\partial \Omega$ is a (conformal) {\it cusp} point if there is $B=B(p,\epsilon)$ such that $\Omega\cap B$ is a Jordan domain and every conformal map $\phi:\DD\to \Omega\cap B$ with $\phi(1)=p$ is analytic at 1 and satisfies $\phi'(1)= 0$;
 $p$ is a {\it  double} point if for some disc $B$,  $\Omega\cap B$ is a union of two disjoint Jordan domains such that $p$ is a regular boundary point for each of them.

\begin{figure}[ht]
\begin{center}
\includegraphics[width=0.6\textwidth]{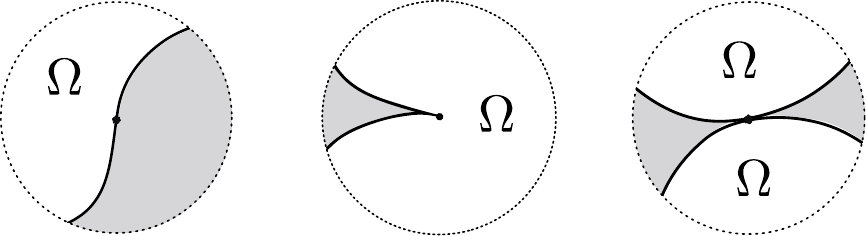}
\end{center}
\caption{\label{fig-sakai} Regular, cusp, and double points}
\end{figure}

By definition, a continuous function $S: B\cap{\rm clos}(\Omega)\to \CC$ is a {\it local} Schwarz function at $p\in\partial\Omega$ if $S$ is analytic in $B\cap\Omega$ and $S(z)=\overline z$ on
   $\partial \Omega\cap B$.

Sakai's regularity theorem  \cite{Sakai91} states that {\it if there exists a local  Schwarz function
 at a singular boundary point $p$, then $p$ is either a cusp, or a double point, or  $\Omega^c\cap B(p, \epsilon)$ is a proper subset of a real analytic  curve}.  (In the last case $p$ is called {\em degenerate}.)

In particular, if $\Omega$ has a local Schwarz function at every boundary point and if $\Omega^c$ is compact and there are no degenerate points, then the set of singular points is finite, and each singular point is a cusp or a double point.
This is the case when $\Omega$ is a quadrature domain (recall that we require $\Omega=\inte\clos\Omega$), or if $\Omega$ is the complement of an algebraic droplet.

\subsection{The complement of an algebraic droplet}\label{sec-dual}

 Let $Q(z)=|z|^2-H(z)$, $z\in\mathcal O$, be a Hele-Shaw potential, (so $H$ is harmonic on the open set ${\mathcal O}$).  Local droplets can be described in terms of the logarithmic potential of the equilibrium measure as follows.

\bigskip
\begin{lemma}\label{thm-droplet} A
 compact set $K\subset \mathcal O$ is a {\it local droplet} of $Q$ if and only if  $K$ is the support of the area measure of $K$ and  if there exists a constant $\gamma\in\RR$ such that
\begin{equation}\label{UQ}
U^K + Q = \gamma\quad \text{on~}\; K,
\qquad U^{K}(z):=\frac{1}{\pi}\int_K \log \frac{1}{|z-w|^{2}}\dd A(w).
\end{equation}
\end{lemma}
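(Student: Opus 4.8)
The plan is to deduce both implications from the Euler--Lagrange (Frostman) variational characterization of the equilibrium measure recorded in \cite{ST}, the point being that for a \emph{localized} potential the two Frostman conditions collapse into the single identity \eqref{UQ}. Throughout write $\sigma:=\frac1\pi\mathbf 1_K\,\dd A$, a measure of mass $t=A(K)/\pi$, and let $U^\sigma(z):=\int\log\frac1{|z-w|}\,\dd\sigma(w)$ be its logarithmic potential, so that $2U^\sigma=U^K$ in the notation of \eqref{UQ}. Since $H$ is harmonic we have $\Delta Q=4$, so $\sigma=\frac1{4\pi}\Delta Q\cdot\mathbf 1_K\,\dd A$ is exactly the candidate equilibrium measure from the definition of a local droplet. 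Two facts will be used repeatedly: first, $Q_K=+\infty$ on $\CC\setminus K$, so $\{Q_K<\infty\}=K$; second, $U^K$ is continuous on $\CC$ because $\sigma$ has bounded density and compact support, whence $U^K+Q$ is continuous on a neighborhood of $K$.

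For the forward implication, suppose $K$ is a local droplet. By definition $\sigma$ is the equilibrium measure of $Q_K$ of mass $t$ and $\operatorname{supp}\sigma=K$, which already gives that $K$ is the support of its area measure. Frostman's variational conditions assert that for some constant $\gamma$ one has $2U^\sigma+Q_K\ge\gamma$ q.e.\ on $\{Q_K<\infty\}$ and $2U^\sigma+Q_K=\gamma$ q.e.\ on $\operatorname{supp}\sigma$. As both sets coincide with $K$ and $Q_K=Q$ there, this reads $U^K+Q=\gamma$ quasi-everywhere on $K$. Because $U^K+Q$ is continuous and a set of zero capacity has empty interior, the equality propagates to every point of $K$, which is \eqref{UQ}.

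For the converse, assume \eqref{UQ} holds and $K=\operatorname{supp}(\mathbf 1_K\,\dd A)$. The potential $Q_K$ is lower semicontinuous, finite on $K$ (a set of positive capacity), and $\equiv+\infty$ off the compact $K$, so the growth condition \eqref{growth} is vacuous and Frostman's theorem provides a unique equilibrium measure of mass $t$. I claim it is $\sigma$. Any competing measure $\mu$ of mass $t$ may be assumed to have finite energy (otherwise $I_{Q_K}[\mu]=+\infty$) and hence to be supported on $K$; writing $\nu:=\mu-\sigma$, a signed measure of total mass $0$ with compact support, a routine expansion gives
\begin{equation*}
I_{Q_K}[\mu]-I_{Q_K}[\sigma]=\int\bigl(2U^\sigma+Q_K\bigr)\,\dd\nu+\iint\log\tfrac1{|z-w|}\,\dd\nu(z)\,\dd\nu(w).
\end{equation*}
By \eqref{UQ} the integrand $2U^\sigma+Q_K=U^K+Q$ equals the constant $\gamma$ on $K\supset\operatorname{supp}\nu$, so the first term is $\gamma\,\nu(\CC)=0$; the second term is the logarithmic energy of a compactly supported signed measure of mass zero, which is nonnegative by the energy principle and vanishes only for $\nu=0$. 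Hence $I_{Q_K}[\mu]\ge I_{Q_K}[\sigma]$ with equality iff $\mu=\sigma$, so $\sigma$ is the equilibrium measure; its support is $K$ by hypothesis, and $K$ is a local droplet.

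The technical heart of the argument --- and the step that must be handled with care --- is the sufficiency direction: the positivity $\iint\log\frac1{|z-w|}\,\dd\nu\,\dd\nu\ge0$ for zero-mass signed measures is exactly the logarithmic energy principle, which requires $\mu$ and $\sigma$ to have finite energy; the bounded density of $\sigma$ secures this for $\sigma$, and restricting to finite-energy competitors is harmless. The only other points needing attention are the bookkeeping of the factor $2$ relating $U^\sigma$ to $U^K$, and the reduction of the two Frostman conditions to one identity, which hinges on $\{Q_K<\infty\}=\operatorname{supp}\sigma=K$.
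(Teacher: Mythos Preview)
Your argument is correct. The paper does not actually prove this lemma but simply refers the reader to Theorem~3.3 in \cite{ST}; your proposal spells out the Frostman/Euler--Lagrange variational characterization that underlies that citation, so it is entirely in line with the authors' intent and supplies the details they chose to omit.
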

\vspace{-0.4cm}
For the proof of this fact, see, for instance, Theorem 3.3 in Ch. I of \cite{ST}.

It is clear that $U^K$ is continuously differentiable and $\partial U^K=-C^K$.
Differentiating  \eqref{UQ} and using the assumption $K={\rm supp}\, {\bf 1}_K\,\dd A$, we see that if $K$ is a local droplet, then
$$\overline z-h(z)=C^K(z) \quad {\rm on}\; K,\qquad\quad h:=\partial H.$$
In the algebraic case (i.e. when $h$ is a rational function) all poles of $h$ are in $\hat \CC\setminus K$, and therefore $h+C^K$ is a local Schwarz function at every boundary point of the open set $\widehat\CC\setminus K$. Applying Sakai's regularity theorem, we conclude that $\widehat\CC\setminus K$ has only {\it finitely many components}.

\bigskip\begin{thm}
\label{thm-HS-QD} The complement of an algebraic droplet  is a finite union of disjoint quadrature domains, and
the quadrature function of the droplet is the sum of the quadrature functions of the complementary components.
\end{thm}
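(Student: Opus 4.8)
The plan is to build directly on the discussion preceding the statement, where it was noted that differentiating the equilibrium condition \eqref{UQ} and using $K=\mathrm{supp}\,\mathbf 1_K$ gives $\overline z-h(z)=C^K(z)$ on $K$. This means that $S:=h+C^K$ is a Schwarz function for the open set $\Omega:=\widehat\CC\setminus K$: it is meromorphic on $\Omega$, the only poles being those of the rational function $h$ (since $C^K$ is holomorphic off $K$ and vanishes at $\infty$); it is continuous up to $\partial\Omega=\partial K$, because no pole of $h$ lies on $\partial K$; and $S=\overline z$ there. Sakai's regularity theorem then yields that $\Omega$ has only finitely many components $\Omega_1,\dots,\Omega_N$, exactly one of which, say $\Omega_\infty$, contains $\infty$. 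First I would verify that each $\Omega_j$ is a genuine quadrature domain. The restriction $S|_{\clos\Omega_j}$ is meromorphic in $\Omega_j$, continuous up to $\partial\Omega_j\subset\partial K$, and equals $\overline z$ on the boundary, so it is a Schwarz function of $\Omega_j$; since $\infty\notin\partial\Omega_j$ and, as $K=\mathrm{supp}\,\mathbf 1_K$ has no degenerate boundary points, $\Omega_j=\inte\clos\Omega_j$, Lemma \ref{lem-S} applies and $\Omega_j$ is a quadrature domain with quadrature function $r_{\Omega_j}$. The $\Omega_j$ are disjoint as distinct components of $\Omega$.

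For the identity $h=\sum_j r_{\Omega_j}$ I would compare principal parts. Set $P:=h-\sum_j r_{\Omega_j}$, a rational function. Each finite pole $p$ of $h$ lies in $\Omega$, hence in a unique component $\Omega_{j(p)}$; there the principal part of $h$ agrees with that of $S$, since $C^K$ is holomorphic at $p$, and by Lemma \ref{lem-S} in the form $S|_{\Omega_{j(p)}}=r_{\Omega_{j(p)}}+C^{\Omega_{j(p)}^c}$ (with $C^{\Omega_{j(p)}^c}$ holomorphic at $p\in\Omega_{j(p)}$) this equals the principal part of $r_{\Omega_{j(p)}}$. For $i\ne j(p)$ the function $r_{\Omega_i}$ has all its poles inside $\Omega_i$ and is holomorphic at $p$. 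Hence $P$ has no finite poles and is a polynomial. At infinity, $r_{\Omega_j}(\infty)=0$ for every bounded component, while for $\Omega_\infty$ the normalization \eqref{Sr0} gives $S-r_{\Omega_\infty}\to 0$, so that $h-r_{\Omega_\infty}=(S-r_{\Omega_\infty})-C^K\to 0$ because $C^K(\infty)=0$. Thus $P\to 0$ at infinity, and a polynomial vanishing at infinity is identically zero, giving $h=\sum_j r_{\Omega_j}$.

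I expect the principal obstacle to be careful bookkeeping rather than a conceptual hurdle: one must keep straight that the normalization of $r_{\Omega_j}$ differs between the bounded components and $\Omega_\infty$, and that a possible pole of $h$ at infinity is fully absorbed into $r_{\Omega_\infty}$—both are handled uniformly by the computation $h-r_{\Omega_\infty}=(S-r_{\Omega_\infty})-C^K$. The one genuinely geometric point requiring justification is that each component satisfies $\Omega_j=\inte\clos\Omega_j$ (so that Lemma \ref{lem-S} is applicable), which I would deduce from the absence of degenerate points in Sakai's classification; this in turn follows from $K$ being the closed support of its own area measure, so that $\partial K$ cannot contain a real-analytic slit.
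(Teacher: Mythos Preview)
Your argument is correct and a bit more direct than the paper's. Both proofs rest on the observation, recorded just before the statement, that $S:=h+C^K$ equals $\overline z$ on $\partial K$; the difference is in how this is exploited. You restrict the single function $S$ to each component $\Omega_j$, invoke Lemma~\ref{lem-S} to conclude that $\Omega_j$ is a quadrature domain, and then recover $h=\sum_j r_{\Omega_j}$ by a principal-parts comparison at the finite poles together with the normalization at $\infty$. The paper instead starts from the partial-fraction decomposition $h=\sum r_j$ with $r_j$ having poles only in $\Omega_j$, and then uses a Morera-type continuation argument (patching $C^{\Omega_\infty}-r_\infty$ and $\sum_{j\ge 1}(r_j-C^{\Omega_j})$ across $\partial K$ to an entire function vanishing at $\infty$) to identify $r_j$ with $r_{\Omega_j}$. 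Your route avoids the Morera step entirely and is arguably cleaner; the paper's route has the mild advantage of producing the Schwarz function of each component in the explicit form $r_j+C^{\Omega_j^c}$ rather than as a restriction of the global $h+C^K$. The point you flag about $\Omega_j=\inte\clos\Omega_j$ is indeed the only place where care is needed, and your justification via $K=\mathrm{supp}\,{\bf 1}_K$ (ruling out degenerate boundary points) is exactly right.
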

\begin{proof}
Let $K$ be an algebraic droplet with quadrature function $h$, and let $\Omega_j$, $j\in\{1,\dots, N, \infty\}$,  be the complementary components, $\infty\in\Omega_\infty$. Then we have a unique representation
\begin{equation*}
h=\sum r_j,
\end{equation*}
where each $r_j$ is a rational function with poles in $\Omega_j$ and  $r_1(\infty)=\dots =r_N(\infty)=0$. We have
\begin{equation*}
\sum C^{\Omega_j}= C^{\widehat\CC}-C^K=\overline z-C^K=(h+C^K)-C^K=h=\sum r_j
\qquad\text{on }\; K,
\end{equation*}
which we can rewrite as
\begin{equation*}
C^{\Omega_\infty}-r_\infty= \sum_{j= 1}^N(r_j-C^{\Omega_j})\qquad\text{on }\; K.
\end{equation*}
The function
\begin{equation*}
F=\begin{cases}C^{\Omega_\infty}-r_\infty & {\rm on}\;  (\Omega_\infty)^c\\\sum_{j= 1}^N(r_j-C^{\Omega_j})\quad&\text{on }\; \Omega_\infty\cup K
\end{cases}
\end{equation*}
is well defined and continuous in $\widehat\CC$, zero at infinity, and analytic in $\CC\setminus \partial K$. Since $\partial K$ is  rectifiable, $F$ is entire by Morera's theorem, and therefore $F\equiv0$. It follows that
$$C^{\Omega_\infty}=r_\infty\qquad {\rm on}\;\partial \Omega_\infty,$$
and so
$$r_\infty+C^{(\Omega_\infty)^c}=C^{\widehat\CC}=\overline z\qquad {\rm on}\;\partial \Omega_\infty.$$
This means that $r_\infty+C^{\Omega_\infty^c}$ is the Schwarz function of $\Omega_\infty$, and by Lemma \ref{lem-S}  $\Omega_\infty$ is a quadrature domain and $r_{\Omega_\infty}=r_\infty$.  Applying this argument to  all bounded components $\Omega_1,\dots, \Omega_N$ (instead of $\Omega_\infty$) we conclude  that all complementary components of $K$ are quadrature domains, and that
$
r_{\Omega_j}=r_j$ for $j=1,\cdots,N$. \end{proof}
Theorem \ref{thm-HS-QD} has the following converse.
\bigskip
\begin{thm}\label{QDtodroplet} Let $K\subset\CC$ be a compact set such that the complement is a finite union of disjoint quadrature domains. Then $K$ is an algebraic droplet.
\end{thm}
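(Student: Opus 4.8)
The plan is to run the computation in the proof of Theorem \ref{thm-HS-QD} in reverse: starting from the decomposition $\widehat\CC\setminus K=\bigsqcup_j\Omega_j$ into quadrature domains, I would manufacture an algebraic Hele-Shaw potential for which $K$ satisfies the Frostman-type criterion of Lemma \ref{thm-droplet}. Writing $\Omega_\infty$ for the component containing $\infty$, I set
$$h:=\sum_j r_{\Omega_j},$$
a rational function whose poles all lie in $\widehat\CC\setminus K$, so that $h$ is analytic in a neighborhood of $K$. I would then fix a harmonic function $H$ on a thin neighborhood $\mathcal O$ of $K$ with $\partial H=h$ and put $Q:=|z|^2-H$, which by construction is an algebraic Hele-Shaw potential with quadrature function $h$. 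The goal is then to verify the two hypotheses of Lemma \ref{thm-droplet}: that $K={\rm supp}\,\mathbf 1_K$, and that $U^K+Q$ equals a single constant $\gamma$ on $K$.

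The first substantive step is the identity $C^K=\overline z-h$ on $K$. For $z\in\inte K$ we have $z\in\inte(\Omega_j^c)$ for every $j$, so \eqref{1ststep} gives $C^{\Omega_j}(z)=r_{\Omega_j}(z)$; summing and using $\sum_j C^{\Omega_j}=C^{\widehat\CC}-C^K=\overline z-C^K$ yields $\overline z-C^K=\sum_j r_{\Omega_j}=h$ on $\inte K$, hence on $K$ by continuity of $C^K$. The next point is to ensure that $H$ genuinely exists as a single-valued harmonic function near $K$. Since $H=2\Re\int h\,\dd z$, the only obstruction is the period of $2\Re(h\,\dd z)$ around each hole of $\mathcal O$, that is, around each bounded component $\Omega_j$; this period is controlled by $\Im\sum_{a\in\Omega_j}\res_{a}h=\Im\sum_{a}\res_{a}r_{\Omega_j}$. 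Integrating the Schwarz function $S_j=r_{\Omega_j}+C^{\Omega_j^c}$ over $\partial\Omega_j$ and using that $C^{\Omega_j^c}$ is analytic inside $\Omega_j$ gives $\sum_a\res_a r_{\Omega_j}=\frac{1}{2\pi\ii}\oint_{\partial\Omega_j}\overline z\,\dd z=A(\Omega_j)/\pi$, which is real. Hence all periods vanish and $H$ is single-valued on $\mathcal O$.

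Combining these, $\partial(U^K+Q)=-C^K+(\overline z-h)=0$ on $\inte K$, and since $U^K+Q$ is real it is locally constant there. The hard part will be promoting ``locally constant'' to ``a single $\gamma$ on all of $K$'': when $\inte K$ is disconnected, which does happen (for instance when $\Omega_\infty$ is a multiply connected unbounded quadrature domain, so that $K$ itself has several components), I must rule out different constants on different pieces. I would attack this through the overdetermined boundary data the construction forces. Extending $U^K+Q$ to $\partial K$ from the $\Omega_\infty$ side one computes $\partial(U^K+Q)=\overline z-S_{\Omega_\infty}$ there, which vanishes on $\partial\Omega_\infty$ since $S_{\Omega_\infty}=\overline z$ on the boundary; thus on each boundary component $\partial K_l$ the function $U^K+Q$ not only takes the value $\gamma_l$ but has vanishing full gradient. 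Feeding this Cauchy (simultaneously Dirichlet and Neumann) data into Green's second identity against the harmonic measures of the boundary components, together with the residue/area identities above, should pin all the $\gamma_l$ to a common value; equivalently, one may recognize $K$ as the partial-balayage coincidence set of the node measures $\sum_j\nu_j$ of the $\Omega_j$ onto normalized area measure, for which the obstacle-problem formulation automatically produces one equilibrium constant. This flux/uniqueness step is where I expect the real work to lie.

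Finally, the support condition $K=\clos(\inte K)$ holds because $h+C^K$ is a local Schwarz function at every point of $\partial K$: by Sakai's regularity theorem (Section \ref{sec-Sakai}) every boundary point is then regular, a cusp, or a double point, none of which allows a lower-dimensional piece of $K$, so $K={\rm supp}\,\mathbf 1_K$. With this and $U^K+Q\equiv\gamma$ in hand, Lemma \ref{thm-droplet} shows that $K$ is a local droplet of the algebraic potential $Q$, i.e. an algebraic droplet, as claimed.
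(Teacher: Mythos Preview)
Your overall architecture is exactly that of the paper: define $h=\sum_j r_{\Omega_j}$, prove $C^K=\bar z-h$ on $K$ from the quadrature identities, check that the periods of $\Re(h\,\dd z)$ vanish so that a harmonic $H$ with $\partial H=h$ exists on a collar $\mathcal O$ of $K$, and then verify Lemma~\ref{thm-droplet}. Your residue computation for the periods is correct and is a legitimate alternative to the paper's slicker route, which integrates the exact form $\dd(|\zeta|^2+U^K(\zeta))$ along loops in $K$ using the identity $h=\bar\zeta-C^K$.

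Where you diverge from the paper is in what you call ``the hard part''. There is no hard part. The function $H$ is only determined by the condition $\partial H=h$ up to an additive real constant \emph{on each connected component of $\mathcal O$}, and the paper simply \emph{chooses} these constants: fixing a point $z_l$ in each component $\mathcal O_l$, it sets
\[
H(z)=2\Re\!\int_{z_l}^{z} h(\zeta)\,\dd\zeta+|z_l|^2+U^K(z_l),\qquad z\in\mathcal O_l,
\]
so that $|z_l|^2-H(z_l)+U^K(z_l)=0$ by construction. Since $\partial(|z|^2-H+U^K)=\bar z-h-C^K=0$ on \emph{all} of $K$ (you proved the identity on $K$, not just on $\inte K$), and the function is real and $C^1$, it is constant on each connected component of $K$; property~(ii) of $\mathcal O$ then pins that constant to $0$ everywhere. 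Your proposed machinery (Green's identities against harmonic measures, or partial balayage) is unnecessary: the ``different constants on different components'' problem dissolves once you remember you are free to pick $H$ componentwise. Note also that pushing the vanishing of the gradient from $\inte K$ to $K$ already handles the case where $\inte K$ is disconnected within a single component of $K$ (double points), so that case does not need a separate argument either.
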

\begin{proof} As in the previous proof we denote the complementary domains by $\Omega_j$, $j\in\{\infty, 1, \dots, N\}$. By assumption, $\Omega_j$'s are quadrature domains; we will write $r_j$ for the corresponding quadrature functions, $r_j=r_{\Omega_j}$, and define
$$h=\sum r_j.$$
We will construct a neighborhood $\mathcal O$ of $K$ and a harmonic function $H:{\mathcal O}\to \RR$ such that $\partial H=h$ and
\begin{equation}\label{zHU}
\forall z\in K,\qquad |z|^2-H(z)+U^K(z)=0.
\end{equation}
Since the condition $K={\rm supp}\, {\bf 1}_K\,\dd A$ is obviously satisfied (by Sakai's regularity theorem), by Lemma \ref{thm-droplet} $K$ will be a local droplet of the algebraic potential $Q(z)=|z|^2-H(z)$ and this will prove the theorem.

\noindent To construct $\mathcal O$ we can just take any open $\epsilon$-neighborhood of $K$ for sufficiently small $\epsilon$. What we need are the following properties of $\mathcal O$:
\begin{itemize}
\vspace{-0.2cm}
\item[(i)] $h$ is holomorphic in $\mathcal O$,
\vspace{-0.2cm}
\item[(ii)] each connectivity component of $\mathcal O$ contains exactly one component of $K$, and
\vspace{-0.2cm}
\item[(iii)] every loop in $\mathcal O$ (an absolutely continuous map $\gamma: S^1\to \mathcal O$) is  homotopic in $\mathcal O$ to a loop in $K$.
\end{itemize}
\vspace{-0.2cm}

The last two properties, for small $\epsilon$'s, are  immediate from the regularity theorem.
In the next paragraph we will show that (iii) implies
\begin{equation}\label{Rh0}
\Re\left(\oint_{\gamma} h(\zeta)\,\dd\zeta\right)=0\qquad {\rm for~ any~  loop} \;\gamma\;{\rm  in}\; \mathcal O.
\end{equation}
We can now construct the harmonic function $H$. Let us fix a point $z_l\in K$ in each connectivity component $\mathcal O_l$  of $\mathcal O$ and  set
$$H(z)=2\Re \left(\int_{z_\ell}^z h(\zeta)\,\dd\zeta\right)+|z_l|^2+U^K(z_l),\qquad z\in \mathcal O_l.$$
Because of \eqref{Rh0}, $H$ is a well defined real harmonic function in $\mathcal O$, and clearly $\partial H=h$.

To prove \eqref{Rh0} we first observe that
\begin{equation}\label{3star}
\forall \zeta\in K,\qquad C^K(\zeta)+h(\zeta)=\bar\zeta.\end{equation}
This is because we have  $C^{\Omega_j}=r_j$ on $\Omega_j^c$ by  the corresponding quadrature identity applied to the  Cauchy kernels as in \eqref{1ststep}, and since
 $K\subset\Omega_j^c$ for all $j$'s, we have
\begin{equation*}
 C^K(\zeta)+ \sum r_j(\zeta)=  C^K(\zeta)+\sum_{j=1}^mC^{\Omega_j}(\zeta)= \overline \zeta,\qquad(\zeta\in K).
\end{equation*}
If $\gamma$ is a loop on $K$, then
\begin{equation*}
\begin{split}2\Re\left(\oint_{\gamma} h(\zeta)\,\dd\zeta\right)&=\oint_\gamma h(\zeta)\dd \zeta+\oint_\gamma \overline{h(\zeta)}\dd \overline \zeta\\
&=\oint_\gamma \left(\overline \zeta -C^K(\zeta)\right)\dd \zeta+\oint_\gamma \left(\zeta -\overline{C^K(\zeta)}\right)\dd \overline \zeta= \oint_\gamma d\left( |\zeta|^2 + U^K(\zeta) \right)=0.
\end{split}
\end{equation*}
By the properties (i) and (iii) in the construction of $\mathcal O$, the equation extends to loops in $\mathcal O$, which proves \eqref{Rh0}.

It remains to check the identity \eqref{zHU}. The identity holds at the points $z=z_l$ by construction. On the other hand, by \eqref{3star} we have
$$\partial~\left(|z|^2-H(z)+U^K(z)\right)=\overline z-h-C^K=0\qquad {\rm on}\;K, $$
which also implies $$\overline{\partial}~\left(|z|^2-H(z)+U^K(z)\right)=0\qquad {\rm on}\;K. $$
By (ii), each component of $K$ contains one of the points $z_l$, and therefore \eqref{zHU} follows.
\end{proof}

\section{Dynamics of the Schwarz reflection}\label{sec-proof}

In this section we establish the connectivity bounds of Theorem \hyperref[thmA]{A} for quadrature domains with no singular  points on the boundary.  We will call such quadrature domains {\em non-singular}.  The argument is based on a quasiconformal modification (``surgery")
of the Schwarz reflection.

\subsection{Schwarz reflection}\label{sec-Sch-refl}

Let $\Omega$ be a {\em non-singular} quadrature domain, i.e. $\partial \Omega$ is a (finite) union of disjoint simple real-analytic curves, and let $S$ denote the Schwarz function of $\Omega$. We will study iterations of the map
$$\overline S:~\clos\Omega\to\widehat\CC,\qquad  z\mapsto \overline{S(z)}.$$
Since $\partial\Omega$ is real analytic, $\overline S$ extends to an antiholomorphic function in some neighborhood of $\clos\Omega$;
this extension is an involution  in a neighborhood of $\partial \Omega$ and  $\partial \Omega$ is the fixed set of the involution. In other words, we can think of
 $\overline S$ as an extension of the Schwarz reflection in $\partial \Omega$.

Let us denote
\begin{equation}\label{def-K}
K:=\widehat\CC\setminus\Omega.
\end{equation}
By assumption, $K$ is a finite union of disjoint closed Jordan domains.  We introduce two disjoint sets
\begin{equation*}
\overline S^{-1}\Omega=\{z\in \Omega\,|\, \overline {S(z)}\in\Omega\},
\qquad \overline S^{-1}K=\{z\in \clos\Omega\,|\, \overline {S(z)}\in K\}.
\end{equation*}
The first set is open and the second one is closed. It will be important that
since there are no singular points, the set $\overline S^{-1}\Omega$ is separated from $K$.

\bigskip
\begin{lemma}\label{lem-deg}
The maps
$$\overline S:~\overline S^{-1}\Omega~\stackrel{d}{\to}~\Omega,
\qquad \overline S:~\inte\left(\overline S^{-1}K\right)~\stackrel{d+1}\longrightarrow ~\inte K$$ are branched covering maps of  degrees $d$ and $d+1$ respectively, where
\begin{equation*}
d:=\begin{cases}
d_\Omega\quad&\text{for unbounded }\Omega,\\
d_\Omega-1 &\text{for bounded } \Omega.\end{cases}
\end{equation*}
If $\overline S$ has no critical values on $\partial\Omega$ then
$$\overline S:~\partial (\overline S^{-1}K)=\partial (\overline S^{-1}\Omega)\sqcup\partial {K}~\stackrel{d+1}{\longrightarrow}~\partial {K}$$ is a covering map.
\end{lemma}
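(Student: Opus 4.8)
The plan is to prove the final claim by showing that, under the hypothesis that $\overline S$ has no critical values on $\partial\Omega$, the restriction of $\overline S$ to $\partial(\overline S^{-1}K)$ is a proper local homeomorphism onto the compact $1$-manifold $\partial K$, and by invoking the standard fact that a proper local homeomorphism between (compact) $1$-manifolds is a covering map. Throughout I would work on the sphere $\widehat\CC$, where $\clos\Omega$ is compact, so that the bounded and unbounded cases are handled uniformly; recall that $\overline S$ fixes $\partial\Omega$ pointwise and extends antiholomorphically, as an involution, to a neighborhood of $\partial\Omega$.

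First I would pin down the set $\partial(\overline S^{-1}K)$. Since $\overline S^{-1}K\subset\clos\Omega$ and $\partial\Omega=\partial K\subset K$ is fixed by $\overline S$, we have $\partial\Omega\subset\overline S^{-1}K$, and $\partial\Omega$ belongs to the topological boundary because points of $\inte K$ lie outside $\clos\Omega$. Using continuity of $\overline S$ together with the identity $K\cap\clos\Omega=\partial\Omega$, I would verify that $\partial(\overline S^{-1}K)=\overline S^{-1}(\partial\Omega)\cap\clos\Omega$. The no-critical-values hypothesis enters already here: at an interior point $z\in\Omega$ with $\overline{S(z)}=p\in\partial\Omega$, regularity makes $\overline S$ a local homeomorphism, so a neighborhood of $z$ maps onto a neighborhood of $p$ straddling $\partial\Omega$, whence $z$ genuinely lies on $\partial(\overline S^{-1}K)=\partial(\overline S^{-1}\Omega)$. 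The fact that $\overline S^{-1}\Omega$ is separated from $K$ (because there are no singular points) guarantees that $\clos(\overline S^{-1}\Omega)$ keeps a positive distance from $\partial\Omega$, so that the interior preimage arcs never touch $\partial\Omega$ and the decomposition $\partial(\overline S^{-1}K)=\partial(\overline S^{-1}\Omega)\sqcup\partial K$ is truly a disjoint union.

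Next I would establish the local-homeomorphism property. Fix $p\in\partial\Omega$; its preimages in $\partial(\overline S^{-1}K)$ consist of $p$ itself, where $\overline S=\mathrm{id}$, together with finitely many interior points lying on $\partial(\overline S^{-1}\Omega)$, and by the separation property these interior preimages are bounded away from $p$, so the fibre is discrete. Because $p$ is not a critical value, every preimage is a regular point of $\overline S$; and since $\partial\Omega$ is a smooth real-analytic curve, the local diffeomorphism $\overline S$ carries the smooth arc $\overline S^{-1}(\partial\Omega)$ homeomorphically onto a sub-arc of $\partial\Omega$ near $p$. Thus $\overline S\colon\partial(\overline S^{-1}K)\to\partial K$ is a local homeomorphism of compact $1$-manifolds.

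Finally I would conclude. The set $\partial(\overline S^{-1}K)$ is closed in the compact set $\clos\Omega$, hence compact, so $\overline S$ restricted to it is proper; a proper local homeomorphism onto a manifold is a covering map, and surjectivity onto every component of $\partial K$ is automatic since the identity branch $\partial\Omega\to\partial\Omega$ already covers $\partial K$. To identify the degree I would invoke the second assertion of the lemma, namely that $\overline S\colon\inte(\overline S^{-1}K)\to\inte K$ is a branched cover of degree $d+1$: as there are no critical values on $\partial K$, each $p\in\partial K$ has exactly $d+1$ preimages, every one of local degree one, so the boundary covering has uniform degree $d+1$. The main obstacle here is not any single estimate but the bookkeeping of the boundary decomposition: one must use both hypotheses together — ``no singular points'' to separate the interior preimage arcs from $\partial\Omega$, and ``no critical values on $\partial\Omega$'' to exclude folding at preimages of the boundary — to be certain that $\partial(\overline S^{-1}K)$ is exactly the asserted disjoint union and that $\overline S$ does not degenerate along it.
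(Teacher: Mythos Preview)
Your proposal addresses only the third assertion of the lemma, whereas the substantive content lies in the first two: that $\overline S$ restricts to branched covers of degrees $d$ and $d+1$ over $\Omega$ and over $\inte K$. The paper proves these via Gustafsson's Schottky double: one glues two copies of $\clos\Omega$ along $\partial\Omega$ to form a compact Riemann surface $\mathcal M$, and observes that $F(z,1)=S(z)$, $F(z,2)=\overline z$ patches to a global meromorphic function on $\mathcal M$. Its degree is the number of poles counted with multiplicity, namely $d_\Omega$ on the first sheet (the poles of $S$ are those of $r_\Omega$) plus $1$ or $0$ on the second sheet according as $\infty\in\Omega$ or not; this is exactly where the case split defining $d$ comes from. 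Restricting the global branched cover $\overline F:\mathcal M\to\widehat\CC$ to the preimages of $\Omega$ and of $\inte K$ then yields the two degree statements simultaneously. You invoke the second of these as a black box to read off the degree of your boundary cover, but you never prove it; without the Schottky double (or some equivalent global argument) there is no a priori reason the antiholomorphic map $\overline S$, defined only on $\clos\Omega$, should have a well-defined global mapping degree at all.

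For the third assertion itself your argument is correct and a good deal more explicit than the paper's, which simply declares it ``obvious'' once the global cover $\overline F$ is in hand. Your route---verify the set identity $\partial(\overline S^{-1}K)=\partial(\overline S^{-1}\Omega)\sqcup\partial K$ using the separation of $\overline S^{-1}\Omega$ from $K$, check that the no-critical-values hypothesis makes $\overline S$ a local homeomorphism along that set, then conclude by properness on compact $1$-manifolds---is sound and has the merit of isolating exactly where each hypothesis is used. But it buys you only the easy part of the lemma; the degree computation, which is what the rest of the paper actually uses, remains unproved.
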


\begin{proof}
Following Gustafsson \cite{Gu1}, we consider the Schottky double ${\mathcal M}=\Omega^{\rm double}$ of the domain $\Omega$. Recall that ${\mathcal M}$ is a union of two copies of $\clos\Omega$, which we denote $(\clos\Omega, 1)$ and $(\clos\Omega,2)$, with the identification
$$(z,1)\sim(z,2),\qquad z\in\partial \Omega,$$
along the boundary.
There is a unique complex structure on ${\mathcal M}$ consistent with the charts
$$(z,1)\mapsto z,\quad (z,2)\mapsto \overline z,\qquad (z\in\Omega),$$
and with respect to this complex structure, the map
$$(z,1)\mapsto S(z),\quad (z,2)\mapsto \overline z,\qquad (z\in\Omega)$$
extends to a meromorphic
function
$$F:{\mathcal M}\longrightarrow \widehat\CC. $$
The degree of $F$ is the number of preimages of $\infty$, which is $d_\Omega$ on the first sheet ($S$ has the same poles as $r_\Omega$ by Theorem \ref{lem-S}), and 1 or 0 on the second sheet according as $\infty$ is in $\Omega$ or not. It follows that
$$\deg F=d+1.$$
Restricting the branched cover $\overline F:{\mathcal M} \stackrel{d+1}\longrightarrow \widehat\CC $ to the preimage of $\Omega$ and disregarding the component $(\Omega,2)$ in this preimage, we obtain the branched cover $\overline S:\overline S^{-1}\Omega\stackrel{d}{\to}\Omega$. Since  $\overline F^{-1}(\inte K)\subset(\clos\Omega, 1)$, the restriction of $\overline F$ to $\overline F^{-1}(\inte K)$ gives us $\overline S:\inte(\overline S^{-1}K)\stackrel{d+1}\longrightarrow \inte K$. The last statement of the lemma is obvious.
\end{proof}

{\bf Remarks.}
\begin{itemize}
\item[(a)] {\em Algebraicity}

 With minor modification (prime ends instead of boundary points), the Schottky double construction extends to general quadrature domains (which may have singular points on the boundary).  We have two meromorphic functions $F$ and $F^\#$ on ${\mathcal M}=\Omega^\text{double}$, where  $F^\#(z,1)=z$ and $F^\#(z,2)=\overline{S(z)}$.
This implies that the boundary of any quadrature domain is a real algebraic curve of degree $\leq 2(d_\Omega+1)$, see \cite{Gu1} for details.

\item[(b)]
{\em Circular inversion of a quadrature domain is a quadrature domain}

 Let $\Omega$ be a BQD with $0\in \Omega$, or a UQD with $0\not\in \clos\Omega$. The circular inversion of $\Omega$,
\begin{equation*}
\widetilde\Omega=\{z: 1/\overline z\in\Omega\},
\end{equation*}
has the Schwarz function
$
\widetilde S(z)=1/\overline{S(1/\overline z)}$.
Therefore, by Theorem \ref{lem-S}, $\widetilde\Omega$ is a quadrature domain, and its order is given by the number of zeros of $S$ in $\Omega$ (counted with multiplicities).   By Lemma \ref{lem-deg}, this number is $d_\Omega+1$  for bounded domains and $d_\Omega$ for unbounded domains.  Thus we have a one-to-one correspondence between BQDs of order $d+1$ and
UQDs of order $d$.
\end{itemize}

\subsection{Model dynamics}

Our strategy will be to extend the map $\overline S:S^{-1}\Omega\to\Omega$ to a topological branched cover of the Riemann sphere and then apply the Douady-Hubbard straightening construction. For simplicity we first assume that
\begin{equation*}
\text{$\overline S$ has no critical value on $\partial\Omega$,}
\end{equation*}
i.e. $z\in\Omega$, $\overline{S(z)}\in\partial\Omega~ \Rightarrow ~S'(z)\neq 0$; in this case $\partial(\overline S^{-1} K)$ is a union of disjoint real analytic Jordan curves.   In Section \ref{sec-surgery} we outline a simple modification of the argument in the case when $\overline S$ has a critical value on $\partial\Omega$.
Let
\begin{equation}\label{decomp-K}
K=\bigsqcup_{\ell=1}^{\conn\Omega}K_\ell
\end{equation}
be the decomposition of $K$ into connected components. (As we mentioned, each $K_\ell$ is a closed Jordan domain.) Accordingly, we have
$$\overline S^{-1}K=\bigsqcup_{\ell=1}^{\conn\Omega}\overline S^{-1}K_\ell $$
where the sets $S^{-1}K_\ell=\{z\in \clos\Omega\,|\, \overline {S(z)}\in K_\ell\}$ are not necessarily connected.  We denote by $A_\ell$ the component of $\overline S^{-1}K_\ell$ that contains $\partial K_\ell$.
As we mentioned, this component contains an annulus such that $\partial K_\ell$ is one of the boundaries of the annulus.   Filling in the hole in the annulus, we also define
$$\hat A_\ell=A_\ell\cup K_\ell,$$
and set
$${A}:=\bigsqcup_{\ell=1}^{\conn\Omega}{A}_\ell,\qquad {\hat A}:=\bigsqcup_{\ell=1}^{\conn\Omega}{\hat A}_\ell.$$
We will modify $\overline S$ on the set $A$ by extending the covering map $\overline S:\partial\hat A\to\partial K$ to a smooth branched cover $\hat A\to K$.  We construct such an extension for each component $\hat A_\ell$ separately using the following ``model" dynamics.  The construction is illustrated in Figure \ref{fig-model}.

\bigskip
\begin{lemma}\label{lem-model} Given positive integers $\nu_1,\dots, \nu_m$, consider the rational function
$$f(z)=\epsilon^2 g(z),$$
where $\epsilon>0$ is a sufficiently small  number and
$$g(z)=z^{\nu_m}\left[1+\sum_{k=1}^{m-1}\frac 1{(z-k)^{\nu_k}}\right].$$
If we denote $$V=\{|z|<\epsilon\},\qquad U=\overline f^{-1}V=\{|f(z)|<\epsilon\},$$
then the following holds true:
\begin{itemize}
\vspace{-0.4cm}
\item[(i)] $U$ is a bounded domain of connectivity $m$, and the map
$$\overline f:~U~\stackrel{\nu}\to~ V,\qquad \nu:=\sum_{k=1}^{m}\nu_k,$$
is a branched covering map of degree $\nu$;
\vspace{-0.4cm}
\item[(ii)] the connected components of $\partial U$ are real analytic Jordan curves, and the restrictions of $\overline f:\partial U\to \partial V$ to these curves are covering maps of degrees $\nu_1,\dots, \nu_m$ respectively;
\vspace{-0.4cm}
\item[(iii)] $\clos V\subset U$,  $0\in V$ is an attracting fixed point of $\overline f$, and the  orbits of all points in $U$ are attracted to $0$.
\end{itemize}
\end{lemma}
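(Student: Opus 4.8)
The plan is to analyze everything through the geometry of the high level sets of the rational function $g$. First I would record its pole and zero structure: $g$ has poles at $z=1,\dots,m-1$ of orders $\nu_1,\dots,\nu_{m-1}$, a pole of order $\nu_m$ at $\infty$ (since the bracket tends to $1$, we have $g(z)\sim z^{\nu_m}$ as $z\to\infty$), and a zero of order $\ge\nu_m$ at the origin. Writing $g=N/D$ over the common denominator $D(z)=\prod_{k=1}^{m-1}(z-k)^{\nu_k}$ and checking that $N$ does not vanish at $z=1,\dots,m-1$, one sees that $N$ and $D$ are coprime and $\deg g=\nu_m+\sum_{k=1}^{m-1}\nu_k=\nu$. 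Since $f=\epsilon^2 g$ and $V=\{|w|<\epsilon\}$, the set $U=\overline f^{-1}V$ coincides with $f^{-1}V=\{|g|<1/\epsilon\}$ (the disc $V$ is invariant under conjugation), and $\overline f\colon U\to V$ is a branched cover of the same degree as $f$.

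For parts (i) and (ii) I would study the super-level set $\{|g|\ge 1/\epsilon\}$ for small $\epsilon$. On the compact set obtained by deleting fixed small discs around $1,\dots,m-1$ and around $\infty$, the modulus $|g|$ is bounded by some constant $M$; hence as soon as $1/\epsilon>M$ the whole set $\{|g|\ge 1/\epsilon\}$ is confined to those $m$ neighborhoods. Near the pole $z=k$ one has $g(z)\sim c_k(z-k)^{-\nu_k}$, so $\{|g|=1/\epsilon\}$ is a smooth perturbation of the circle $|z-k|=(|c_k|\epsilon)^{1/\nu_k}$, and near infinity it is a perturbation of $|z|=\epsilon^{-1/\nu_m}$. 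Taking $\epsilon$ small enough that $1/\epsilon$ exceeds the (finitely many) critical values of $|g|$ makes $1/\epsilon$ a regular value, so $\partial U=\{|g|=1/\epsilon\}$ is a disjoint union of exactly $m$ real-analytic Jordan curves, one around each finite pole and one around $\infty$. Thus $\widehat\CC\setminus U$ has $m$ components and $U$ is a bounded domain of connectivity $m$. On the curve around $z=k$ the argument of $g$ (hence of $f$) decreases by $2\pi\nu_k$ as $z$ winds once, and by $2\pi\nu_m$ on the curve around $\infty$; this yields the covering degrees $\nu_1,\dots,\nu_m$ in (ii), and summing them, or applying the argument principle to $f-w_0$ for $w_0\in V$ (using $|f|=\epsilon>|w_0|$ on $\partial U$ and Rouch\'e), recovers the total degree $\nu$ in (i).

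For part (iii) the fixed point and the attraction are direct estimates. From $g(0)=0$ we get $f(0)=0$, so $\overline f(0)=\overline{f(0)}=0$ is a fixed point, and the multiplier $|f'(0)|$ is $O(\epsilon^2)$, so $0$ is attracting (superattracting if $\nu_m\ge 2$). For $|w|\le\epsilon$ the bracket is bounded and $|g(w)|\le C\epsilon^{\nu_m}$, whence $|f(w)|\le C\epsilon^{2+\nu_m}<\epsilon$; this shows simultaneously that $\clos V\subset U$ and that $\overline f(\clos V)\Subset V$. Since $V\subset U$ and $\overline f(U)\subseteq V$ by construction, $V$ is forward invariant, and the holomorphic map $\overline f^{\circ 2}\colon V\to V$ has image compactly contained in $V$; by the Schwarz--Pick lemma it is a uniform contraction of the hyperbolic metric, so its iterates converge to the unique fixed point $0$. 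Because every point of $U$ lands in $V$ after one application of $\overline f$, all orbits in $U$ are attracted to $0$.

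The step I expect to be the main obstacle is the topological control in (i)--(ii): verifying that for all sufficiently small $\epsilon$ the level set $\{|g|=1/\epsilon\}$ is \emph{exactly} $m$ disjoint Jordan curves with the stated winding numbers, with no spurious components and no critical points on them. The key to making this clean is the separation estimate above --- forcing $\{|g|\ge 1/\epsilon\}$ into fixed small neighborhoods of the poles and of infinity by choosing $1/\epsilon$ larger than $\sup|g|$ on the complementary compact set --- after which the local models $g\sim c_k(z-k)^{-\nu_k}$ and $g\sim z^{\nu_m}$ simultaneously pin down the Jordan-curve structure and the winding degrees.
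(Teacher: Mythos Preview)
Your argument is correct and follows the same approach as the paper, which simply records the pole structure of $f$, observes that for small $\epsilon$ the level set $\partial U=\{|g|=1/\epsilon\}$ consists of near-circles around $1,\dots,m-1$ and a large near-circle around $\infty$, and then declares ``all the statements of the lemma are obvious.'' You have carefully supplied the details (regular value, local models, Schwarz--Pick contraction) that the paper leaves implicit.
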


\begin{proof}  $f$ has poles at the points $z=1, \dots, m-1, \infty$, and the multiplicities of the poles are $\nu_1,\dots, \nu_{m-1},\nu_m$ respectively. In particular, $\deg f=\nu$.  If $\epsilon$ is very small, the boundary of $U$,
$$\partial U=\left\{|g(z)|=\frac1\epsilon\right\}, $$
consists of small Jordan curves that are close to circles surrounding the points $1, \dots, m-1$, and a large ``circle" around infinity.   All the statements of the lemma are obvious.
\end{proof}

\begin{figure}[ht]
\begin{center}
\includegraphics[width=0.7\textwidth]{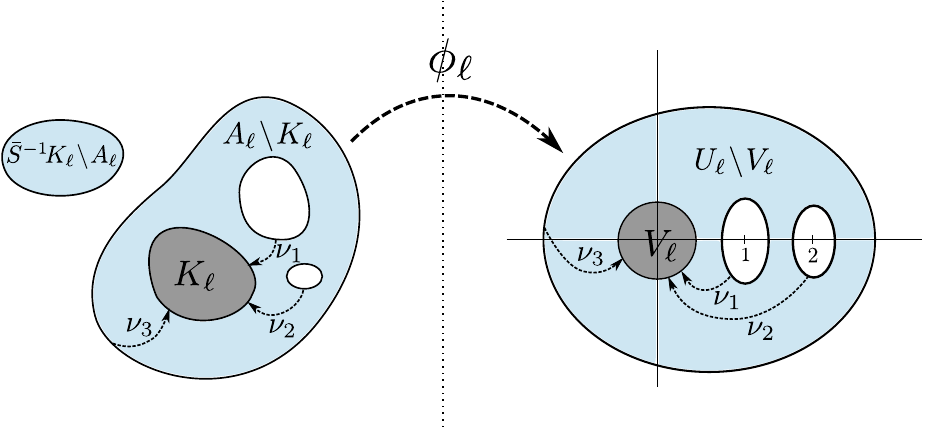}
\end{center}
\caption{\label{fig-model} Modification of the Schwarz reflection in Lemma \ref{Dtheorem}}
\end{figure}

\subsection{Douady-Hubbard construction}\label{sec-surgery}

\begin{lemma}\label{Dtheorem}
There exists a branched covering map $G:\widehat\CC\to\widehat\CC$ of degree $d$ such that
\begin{itemize}
\vspace{-0.4cm}
\item[(i)] $G=\overline S$ on $\widehat\CC\setminus \hat A$,
\vspace{-0.4cm}
\item[(ii)] $G$ is quasi-conformally equivalent to a rational map, i.e.
\begin{equation*}
G=\Phi^{-1}\circ \overline R\circ \Phi,
\end{equation*}
for some rational map $R$ and some orientation preserving quasi-conformal homeomorphism $\Phi:\widehat\CC\to\widehat\CC$,
\vspace{-0.4cm}
\item[(iii)] each component $K_\ell$ of $K$ contains a fixed point of $G$ which attracts the orbits of all points in $K_\ell$.
\end{itemize}
\end{lemma}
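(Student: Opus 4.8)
The plan is to carry out a Douady--Hubbard quasiconformal surgery: first build $G$ as a smooth, orientation-reversing branched cover that agrees with $\overline S$ outside $\hat A$ and realizes attracting dynamics inside each $\hat A_\ell$, and then straighten it to an antiholomorphic rational map. Throughout I work under the standing assumption that $\overline S$ has no critical value on $\partial\Omega$, so each $\partial\hat A_\ell$ is a union of real-analytic Jordan curves.

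First I would construct $G$ topologically. On $\widehat\CC\setminus\hat A$ set $G=\overline S$, which already forces (i). By Lemma \ref{lem-deg}, every boundary curve of $\hat A_\ell$ lies in $\partial(\overline S^{-1}\Omega)$ and is mapped by $\overline S$ as a covering onto $\partial K_\ell$; let $\nu_1^{(\ell)},\dots,\nu_{m_\ell}^{(\ell)}$ be the corresponding positive degrees, where $m_\ell=\conn\hat A_\ell$ (one outer boundary and $m_\ell-1$ holes, coming from the components of $A_\ell$ other than $K_\ell$). For these integers Lemma \ref{lem-model} produces a model $\overline f_\ell\colon U_\ell\to V_\ell$ with $U_\ell$ a planar domain of connectivity $m_\ell$, $V_\ell$ a disc, boundary degrees $\nu_k^{(\ell)}$, and an attracting fixed point $0\in V_\ell$ absorbing all of $U_\ell$. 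Since $(\hat A_\ell,K_\ell)$ and $(U_\ell,V_\ell)$ are diffeomorphic pairs of the same connectivity, I fix a diffeomorphism carrying one to the other and define $G|_{\hat A_\ell}$ to be the transported map $\overline f_\ell$, interpolated inside thin collars along the boundary curves so that $G=\overline S$ on $\partial\hat A_\ell$. The interpolation is possible because two coverings of $S^1$ of equal degree are conjugate by a circle diffeomorphism, so the collar maps can be taken to be unbranched coverings of an annulus; hence $G$ is a well-defined smooth branched cover. Counting preimages of a generic point of $\Omega$, all of which lie in $\overline S^{-1}\Omega$, gives $\deg G=d$, and property (iii) is inherited from the model: $G(\hat A_\ell)=K_\ell\subset\hat A_\ell$, so $K_\ell$ is forward invariant and every orbit in $\hat A_\ell$ converges to the fixed point lying in $K_\ell$.

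Next I would straighten $G$. By construction $G$ is antiholomorphic everywhere except on the union $E=\bigsqcup_\ell E_\ell$ of the interpolation collars, where it is merely quasiregular with dilatation bounded by some $k_0<1$. The crucial dynamical observation is that the collars are non-recurrent: $E_\ell\subset\hat A_\ell\setminus K_\ell$, while $G(E_\ell)\subset G(\hat A_\ell)=K_\ell$ and $K_\ell$ is forward invariant, so once an orbit enters some $\hat A_\ell$ it is trapped in $K_\ell$ thereafter and never returns to $E$. Consequently every orbit meets $E$ at most once. Spreading the standard conformal structure along the dynamics therefore yields a $G$-invariant Beltrami coefficient $\mu$ whose $L^\infty$-norm is controlled by this single possible passage through $E$, so $\|\mu\|_\infty\le k_0<1$. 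The measurable Riemann mapping theorem gives an orientation-preserving quasiconformal $\Phi\colon\widehat\CC\to\widehat\CC$ with Beltrami coefficient $\mu$, and by invariance $\overline R:=\Phi\circ G\circ\Phi^{-1}$ preserves the standard structure; being an orientation-reversing branched self-cover of $\widehat\CC$ it is antiholomorphic, so $R$ is rational. This is exactly (ii).

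The main obstacle is the bound $\|\mu\|_\infty<1$ for the invariant structure, that is, keeping the dilatation away from $1$ under the surgery. Here this reduces to the non-recurrence of $E$, which is the whole point of building the model so that $K_\ell$ is forward invariant with an interior attracting fixed point (part (iii) of Lemma \ref{lem-model}); the ``at most one visit'' property then makes the estimate immediate. The remaining care is bookkeeping for the orientation-reversing setting---passing to $G^2$, or equivalently working with anticonformal structures---so that the measurable Riemann mapping theorem applies and yields an antiholomorphic, rather than holomorphic, straightening.
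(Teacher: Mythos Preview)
Your overall strategy---model dynamics from Lemma~\ref{lem-model}, gluing along boundary covers, then Douady--Hubbard straightening---is exactly what the paper does, and your bookkeeping for the degree and for property~(iii) is fine. There is, however, a real gap in the straightening step.

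You choose an arbitrary diffeomorphism $\phi_\ell:(\hat A_\ell,K_\ell)\to(U_\ell,V_\ell)$ and transport $\overline f_\ell$, and then assert that ``$G$ is antiholomorphic everywhere except on the union $E$ of the interpolation collars.'' That is not true: the conjugate $\phi_\ell^{-1}\circ\overline f_\ell\circ\phi_\ell$ is antiholomorphic only where $\phi_\ell$ is conformal, and a generic diffeomorphism between planar domains of the same connectivity is not conformal (the conformal moduli of $\hat A_\ell$ and $U_\ell$ will typically differ). In particular $G$ is not antiholomorphic on $K_\ell$, and since every orbit that enters $K_\ell$ stays there forever, ``spreading the standard conformal structure along the dynamics'' would pull back dilatation infinitely many times. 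Your non-recurrence argument for $E$ does not save this, because the problematic region is not $E$ but all of $\hat A_\ell$.

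The paper avoids this by building $\phi_\ell$ more carefully: first take $\phi_\ell:K_\ell\to\clos V_\ell$ to be a \emph{Riemann map}, lift $\phi_\ell|_{\partial K_\ell}$ through the two covering maps $\overline S:\partial\hat A_\ell\to\partial K_\ell$ and $\overline f_\ell:\partial U_\ell\to\partial V_\ell$ to a diffeomorphism $\partial\hat A_\ell\to\partial U_\ell$ (so no interpolation collar is needed at all), and then extend smoothly to $\hat A_\ell$. Now $G$ is genuinely antiholomorphic on $\inte K$, one can set the invariant ellipse field equal to circles on $K$ and on $\widehat\CC\setminus G^{-\infty}K$, and the only dilatation is picked up on $G^{-1}K\setminus K=A$, which each orbit meets at most once. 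An equivalent fix to your version is to define the invariant structure on $\hat A_\ell$ as $\phi_\ell^{*}$ of the standard structure on $U_\ell$ rather than as the standard structure; then $G|_{K_\ell}$ preserves it because $\overline f_\ell$ is antiholomorphic, and the rest of your argument goes through.
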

\begin{proof} For each component $K_\ell$  we construct the map
$$\overline f_\ell: U_\ell\to V_\ell$$
as in the previous lemma in which  we choose $m$ to be the connectivity of $\hat A_\ell$ and $\nu_1,\dots,\nu_m$ the degrees of $\overline S$ on the components of $\partial\hat A_\ell$. (Recall that $\overline S:\partial\hat A_\ell\to\partial K$ is a covering map.) Let
$$\phi_\ell:K_\ell\to \clos V_\ell$$
be (the continuous extension of) a Riemann mapping. We can lift $\phi_\ell:\partial K_\ell\to \partial V_\ell$ to a diffeomorphism $\phi_\ell:\partial \hat A_\ell\to \partial U_\ell$ so that
\begin{equation*}\begin{split}
\text{
\begin{tikzpicture}[node distance=3cm, auto]
\node (H) {$\displaystyle \partial \hat A_\ell$};
  \node (K) [node distance=2cm, below of=H] {$\partial K_\ell$};
  \node (N) [right of=H] {$\displaystyle \partial U_\ell$};
  \node (D) [node distance=2cm, below of=N] {$\partial V_\ell$};
  \draw[->,dashed] (H) to node {$\phi_\ell$} (N);
  \draw[->] (H) to node [swap] {$\overline S$} (K);
  \draw[->] (K) to node {$\phi_\ell$} (D);
   \draw[->] (N) to node {$\overline{f_\ell}$} (D);
\end{tikzpicture}}\end{split}
\end{equation*}
This is possible because we have matched the degrees of the covering maps. Finally, we extend $\phi_\ell$ to a diffeomorphism
$$\phi_\ell:\hat A_\ell\to \clos U_\ell.$$
We define the map $G$ by the formula
\begin{equation*}
G=\begin{cases} \overline S ~~&\text{on}~~\Omega\setminus  A,\\
\phi_\ell^{-1}\circ \overline{f_\ell}\circ\phi_\ell~~&\text{on}~~{\hat A}_\ell\cup K_\ell,\quad \ell=1,\cdots,\conn\Omega.
\end{cases}
\end{equation*}
By construction, $G:\widehat\CC\to \widehat\CC$ is a branched cover of degree $d$.

 Following  the proof of Douady-Hubbard straightening theorem \cite{DH}, let us show that $\overline G$ is quasi-conformally equivalent to a rational function.. We will construct an invariant infinitesimal ellipse field $\mathcal E$ of bounded   eccentricity and then  apply
the measurable Riemann mapping theorem (see e.g. \cite{Carleson-Gamelin}, Section I.7 and VI.1). The invariance means that for almost all $w\in\mathbb C$,
\begin{equation}\label{pullbackE}
Gz=w\quad\Rightarrow\quad G_* {\mathcal E}(z)={\mathcal E}(w).
\end{equation}
Since $GK\subset K$ we have
$$K\subset G^{-1}K\subset G^{-2}K\subset\dots,$$
and we have the decomposition
$$\hat \CC=(\widehat\CC\setminus G^{-\infty}K)~\sqcup~ K~\sqcup~(G^{-1}K\setminus K)~\sqcup~(G^{-2}K\setminus G^{-1} K)~\sqcup~\dots,$$
where $$G^{-\infty}K:=K\cup G^{-1}K\cup G^{-2}K\cup\dots$$
Let us set
$${\mathcal E=\mathcal E_0}~~ (\text{circle field}) \quad{\rm on}\quad (\widehat\CC\setminus G^{-\infty}K)~\sqcup~ K$$
and then define $\mathcal E$ on $G^{-\infty}K$  by recursively  applying  \eqref{pullbackE}. The resulting ellipse field has bounded   eccentricity because $G=\overline S$ outside $G^{-1}K$ (we have $GA\subset K$ so $A\subset G^{-1}K$). The field is invariant because \eqref{pullbackE} is automatic by construction if  $z\notin K$. If $z\in K$, then $G$ is conformal at $z$, and  $w=Gz\in K$.  By the measurable Riemann mapping theorem, there exists a quasi-conformal homeomorphism $\Phi:\widehat\CC\to\widehat\CC$ such that
$$\Phi^*{\mathcal E}={\mathcal E}_0.$$
The branched covering map $\overline R:=\Phi\circ G\circ \Phi^{-1}$ takes infinitesimal circles to circles, so $R$ has to be a rational function.
\end{proof}

\begin{figure}[ht]
\begin{center}
\includegraphics[width=0.7\textwidth]{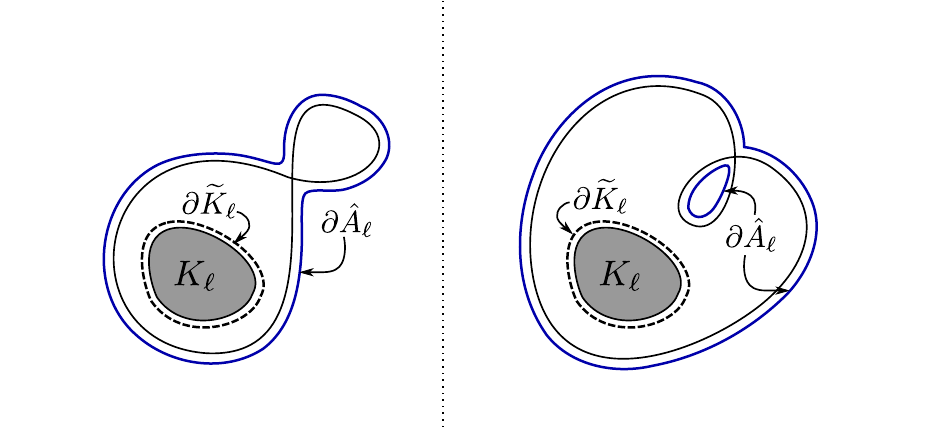}
\end{center}
\caption{\label{fig-model-crit} The case where $\bar S$ has a critical value on $\partial K$}
\end{figure}

{\bf Remark.}

In the case where $\overline S$ has a critical value on $\partial\Omega$ the statement of Lemma \ref{lem-deg} remains true if we redefine the set $\hat A$ as follows.  (We still assume that $\Omega$ is non-singular.)  Fix a sufficiently small positive number $\epsilon$, denote by $\widetilde K$ the complement of the $\epsilon$-neighborhood of $\Omega$, and set
$$\hat A=\overline S^{-1}\widetilde K\cup \widetilde K,$$
see Figure \ref{fig-model-crit}.   The restriction $\overline S:\partial \hat A\to\partial\widetilde K$ is an unbranched covering map, which we can extend to a smooth branched cover $\hat A\to\widetilde K$ using the model dynamics as in the proof of Lemma \ref{lem-model}.   The rest of the argument goes through verbatim.


\subsection{Connectivity bounds for non-singular unbounded quadrature domains}\label{proof-UQD}

We will now use Lemma \ref{Dtheorem} and elementary facts of rational dynamics to prove the connectivity bounds of Theorem \hyperref[thmA]{A} in the case of non-singular quadrature domains.  The proof, which is quite similar to Khavinson-\'Swi\c atek's argument in \cite{Kha1}, is based on Fatou count of attracting fixed points.
\bigskip
\begin{lemma}\label{lem-Fatou} Let $R$ be a rational function of degree $\geq 2$.  Then each immediate basin of attraction for $\overline R$ contains at least one critical point of $R$.
\end{lemma}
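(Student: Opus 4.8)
The plan is to reduce this antiholomorphic statement to the classical Fatou theorem (the holomorphic case of the present lemma) applied to the second iterate. Write $g=\overline R$, so $g(z)=\overline{R(z)}$ is antiholomorphic, and set $R_2:=g\circ g$, a holomorphic rational map of degree $(\deg R)^2\ge 4$. The first step is a local multiplier computation: at a fixed point $p$ of $g$ (so $R(p)=\bar p$), in the coordinate $w=z-p$ one has $g(z)\approx p+\overline{R'(p)}\,\bar w$, and hence $R_2(z)\approx p+|R'(p)|^2\,w$. Thus $R_2'(p)=|R'(p)|^2$ is real and nonnegative, so $p$ is an attracting fixed point of $R_2$ exactly when it is an attracting fixed point of $\overline R$ (and superattracting precisely when $R'(p)=0$).

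Next I would identify the two immediate basins. The full basins coincide as sets: if $g^n(z)\to p$ then the even subsequence $R_2^n(z)\to p$; conversely if $R_2^n(z)\to p$ then $g^{2n}(z)\to p$ and $g^{2n+1}(z)=g(R_2^n(z))\to g(p)=p$ by continuity of $g$. Therefore the connected components containing $p$ agree; denote this open set by $\Lambda$. Since $g$ is continuous and fixes $p$, we also record the forward invariance $g(\Lambda)\subseteq\Lambda$, which will be used at the end.

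Now apply the classical Fatou theorem to the holomorphic map $R_2$: as $\deg R_2\ge 2$, the immediate basin $\Lambda$ of the attracting fixed point $p$ contains a critical point $c$ of $R_2$. The remaining task is to upgrade this to a critical point of $R$ lying in $\Lambda$. Writing $g=\sigma\circ R$ with $\sigma(z)=\bar z$ a global diffeomorphism, the branch points of $g$ are exactly the branch points of $R$, so $\operatorname{Crit}(g)=\operatorname{Crit}(R)$; by the chain rule for the branched cover $R_2=g\circ g$ we get $\operatorname{Crit}(R_2)=\operatorname{Crit}(R)\cup g^{-1}(\operatorname{Crit}(R))$. If $c\in\operatorname{Crit}(R)$ we are finished. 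Otherwise $g(c)\in\operatorname{Crit}(R)$, and since $c\in\Lambda$ and $g(\Lambda)\subseteq\Lambda$, the point $g(c)$ is a critical point of $R$ lying in $\Lambda$. In either case the immediate basin of $p$ for $\overline R$ contains a critical point of $R$.

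I expect the multiplier computation and the set-theoretic equality of basins to be routine. The one place that needs care is the critical-point bookkeeping: a naive application of Fatou only yields a critical point of $R_2$, and the decomposition $\operatorname{Crit}(R_2)=\operatorname{Crit}(R)\cup g^{-1}(\operatorname{Crit}(R))$ together with the invariance $g(\Lambda)\subseteq\Lambda$ is exactly what converts this into a genuine critical point of $R$ inside the same immediate basin.
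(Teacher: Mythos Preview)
Your argument is correct. The multiplier computation, the identification of basins, the forward invariance $g(\Lambda)\subseteq\Lambda$, and the critical-point bookkeeping via $\operatorname{Crit}(R_2)=\operatorname{Crit}(R)\cup g^{-1}(\operatorname{Crit}(R))$ all check out, and together they yield a critical point of $R$ in $\Lambda$.

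Your route differs from the paper's. The paper does not actually write out a proof: it simply asserts that the classical argument in Carleson--Gamelin, Section III.2 (produce a univalent inverse branch on a disk about the fixed point if no critical point is present, iterate, and reach a contradiction via normal families) goes through verbatim for anti-holomorphic maps, since that argument depends only on local degree and normality, not on orientation. You instead treat the holomorphic Fatou lemma as a black box and reduce to it by passing to the holomorphic second iterate $R_2=R^*\circ R$. The paper's approach is shorter to state but asks the reader to re-examine each step of the classical proof; your approach is more self-contained and makes the dependence on the holomorphic case explicit, at the cost of the extra (but easy) step of pushing a critical point of $R_2$ forward along $g$ to land on a critical point of $R$ inside $\Lambda$.
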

If $a$ is an attracting fixed point of $\overline R$, then its {\it immediate basin of attraction} is the component of the open set $\{z:~\overline R^n(z)\to a\}$ which contains $a$. See \cite{Carleson-Gamelin}, Section III.2 for the proof of Fatou's lemma in the case of holomorphic rational dynamics.  Exactly the same proof works for anti-holomorphic dynamics and gives Lemma \ref{lem-Fatou}.

Let $\Omega$ be a quadrature domain (bounded or unbounded) without singular points on the boundary, and let $S$ be its Schwarz function.   If $\overline S$ has no critical values on $\partial\Omega$ then as in \eqref{def-K}-\eqref{decomp-K} we denote
$$K=\widehat\CC\setminus\Omega=\bigsqcup_{\ell=1}^{\conn\Omega}K_\ell$$
and construct a $d$-cover $$G:\widehat\CC\to \widehat\CC$$ as in Lemma \ref{Dtheorem}.  (Recall that $d=d_\Omega$ in the case of unbounded quadrature domains and $d=d_\Omega-1$ for bounded quadrature domains.)
If $\overline S$ has critical values on $\partial\Omega$ then we need to proceed as explained in Remark in Section \ref{sec-surgery}.
In any case,
$G$ is quasi-conformally equivalent to an anti-analytic rational map, $G$ has $2d-2$ critical points, and each component $K_\ell$ contains an attracting fixed point of $G$.  Applying Lemma \ref{lem-Fatou} we find that
\begin{equation}\label{dkn}
\conn\Omega \leq  2d-2=\begin{cases} 2d_\Omega-2 &\text{for unbounded $\Omega$},
\\ 2 d_\Omega- 4 &\text{for bounded $\Omega$}.\end{cases}
\end{equation}
It is clear that we get a better bound if there are critical points of multiplicity $>1$, or more generally if there are several critical points with the same asymptotic behavior. More specifically, let us call critical points $c_1,\dots ,c_m$ {\em equivalent} if
$$G(c_1)=G(c_2)=\dots= G(c_m).$$
If a fixed point attracts the orbit of $c_1$, then it attracts the orbits of all equivalent points as well, so the Fatou count gives us
$$\conn\Omega \leq  2d-2-\left(\sum_1^m {\rm mult}~ c_j-1\right)=2d-1-\sum_1^m {\rm mult}~ c_j.$$
Furthermore, if we somehow know that the orbit of $c_1$ is not attracted to any of the fixed points in $K$, then we get
\begin{equation*}
\conn\Omega \leq  2d-2-\sum_1^m {\rm mult}~ c_j.
\end{equation*}
Let us assume now that $\Omega$ is an {\it unbounded} domain, and let $z_1,\cdots, z_n$ be the poles of $S$ of order $\mu_1,\cdots,\mu_n$.   Since $\overline Sz_j=\infty$ we have $z_j\not\in A$ and $G=\overline S$ near the poles. The poles with $\mu_j>1$ are critical points $c_j=z_j$ of $G$ of multiplicities $\mu_j-1$. These critical points are equivalent  and
$$\sum {\rm mult}~ c_j=\sum_1^n(\mu_j-1)=\sum_1^n\mu_j-n=d-n.$$
It follows that
$$\conn\Omega \leq2d-1-(d-n)=d+n-1,$$
and together with \eqref{dkn} this proves the inequality \eqref{eq-thma1}.

Furthermore, if $\Omega$ has a node at $\infty$, then $\infty$ is a fixed point of $G$ and so the orbits of the poles are not attracted to any fixed point in $K$.  Therefore we can apply \eqref{dkn} and we get \eqref{eq-thma2},
\begin{equation*}
\conn\Omega\leq d+n-2.
\end{equation*}

\subsection{Connectivity bounds for non-singular bounded quadrature domains}\label{proof-BQD}
Let $\Omega$ be a non-singular {\it bounded} quadrature domain of order $d_\Omega\geq 3$ (so $d=d_\Omega-1\geq 2$, and we can apply Lemma \ref{lem-Fatou}), and let $S$ be its Schwarz function.
For notational simplicity we assume that $\overline S$ has no critical value on $\partial\Omega$, see Remark in Section \ref{sec-surgery}.  Denote by $K_\infty$ the unbounded component of $K=\Omega^c$ and consider the branched covering map
\begin{equation}\label{451}
\overline S:~\overline S^{-1}K_\infty\equiv A_\infty\sqcup B_1\sqcup\dots\sqcup B_m~\stackrel{d_\infty~\sqcup~ d_1~\sqcup~\dots~\sqcup~ d_m}{\xrightarrow{\hspace*{3cm}}}~K_\infty
\end{equation}
where $A_\infty$ and $B_j$ are connectivity components, $\partial K_\infty\subset A_\infty$, and
$$d_\infty+ d_1+\dots+ d_m=d_\Omega,$$
see Lemma \ref{lem-deg}.  We will first prove the inequality
\begin{equation}\label{452}
\conn\Omega\le d_\Omega+n_\Omega-2.
\end{equation}
We can assume that $\Omega$ is not simply connected (otherwise there is nothing to prove) so $d_\infty\geq 2$ (recall that $K_\infty$ is a closed Jordan domain).  Let $G$ be the map constructed in Lemma \ref{Dtheorem}.  We have
\begin{equation*} G:~G^{-1}K_\infty=\hat A_\infty\sqcup B_1\sqcup\dots\sqcup B_m~\stackrel{(d_\infty-1)~\sqcup~ d_1~\sqcup~\dots~\sqcup~ d_m}{\xrightarrow{\hspace*{3cm}}}~K_\infty,
\end{equation*}
because $G:\hat A_\infty\to K_\infty$ is a $(d_\infty-1)$-cover and $G=\overline S$ on $\bigsqcup B_j$.  By the Riemann-Hurwitz count, $G$ has {\em at least}
\begin{equation}\label{454}
(d_\infty-2)+ (d_1-1)+\dots+ (d_m-1)=d_\Omega-m-2
\end{equation}
critical points (counted with multiplicities) in $G^{-1} K_\infty$.  The $G$-orbits of all these critical points land in $K_\infty$ so {\em at most}
\begin{equation}\nonumber
(2d_\Omega-4)-(d_\Omega-m-2)=d_\Omega+m-2
\end{equation}
critical points of $G$ land in finite components of $K$.  By Lemma \ref{lem-Fatou} we have
\begin{equation}\label{455}
\conn\Omega\leq d_{\Omega}+m-1.
\end{equation}
Returning to \eqref{451} we estimate $m$ in terms of the number of {\em distinct} poles of $\overline S$ (or, equivalently, $r_\Omega$):
\begin{equation}\label{mnnn}
m+1\le n_\infty+ n_1+\dots+ n_m=n_\Omega
\end{equation}
where $n_j\geq 1$ (resp. $n_\infty\geq 1$) are the number of distinct nodes of $S$ in $B_j$ (resp. $A_\infty$). Together with \eqref{455} this gives \eqref{452}.  Applying \eqref{dkn} we get \eqref{eq-thma3}.

We would get a better estimate if there were at least two distinct nodes in $A_\infty$.

Let us now show that if $S$ has no poles of order $\geq 3$ then
\begin{equation}\label{456}
\conn\Omega\le d_\Omega+n_\Omega-3.
\end{equation}
Indeed, if the poles in $A_\infty$ are at most double, and if $d_\infty\geq 3$ then $S$ has two distinct poles in $A_\infty$ and we are done by the previous remark.   Let us therefore assume $d_\infty=2$ which means that $G$ has at least $d_\Omega-m-2$ critical points (counted with multiplicities) in $\bigcup B_j$, see \eqref{454}.  At the same time $G$ has an attracting fixed point in $K_\infty$, and by Lemma \ref{lem-Fatou} there is at leat one critical point of $G$ in the set
$$\Bigg[\bigcup_{n\ge0} G^{-n}K_\infty\Bigg]
~\setminus~\Bigg[\bigcup_{j=1}^m B_j\Bigg]$$
because the {\em immediate} basin of attraction does not intersect the sets $B_j$. Altogether we get at least $d_\Omega-m-1$ critical points landing in $K_\infty$ and at most $d_\Omega+m-3$ critical points landing in finite components of $K$.  The estimate  \eqref{456} now follows from \eqref{mnnn}, and combining  \eqref{456} and \eqref{dkn} we get \eqref{eq-thma4}.

{\bf Remark.}

There is a short way to derive the estimate $\conn\Omega\leq d_\Omega+n_\Omega-2$ from the corresponding bound \eqref{eq-thma1} for UQDs.
Inscribe $\Omega$ in a round disc $B$ centered at the origin so that there are at least two common boundary points, $\#(\partial\Omega\cap\partial B)\ge2$. Applying a Hele-Shaw perturbation, see the next section, we get an unbounded quadrature domain $\Omega'$ with $d_{\Omega'}=d_\Omega$ and $n_{\Omega'}=n_\Omega$  but with $\conn(\Omega')\ge \conn(\Omega)+1$. By \eqref{eq-thma1} we have
$$\conn\Omega'\le d_{\Omega'}+n_{\Omega'}-1,$$
which implies \eqref{452}.

On the other hand, it is not quite clear how to derive the (sharp) stronger estimate \eqref{eq-thma4} using this method in the case when all nodes of the bounded quadrature domain are at most double.

\section{Singular quadrature domains and Hele-Shaw flow}

In the last section of the paper we complete the proof of Theorem \hyperref[thmA]{A}  and also prove two other related statements (Theorems \ref{thm-theyare} and \ref{thm-B} from Section 2).  We start with a review of some properties of Hele-Shaw chains of algebraic droplets. The theory of Hele-Shaw chains will allow us to extend the connectivity bounds to quadrature domains with singular points on the boundary.

\subsection{Hele-Shaw chains with source at infinity}\label{sec-HSdetail}

Let $K$ be a local droplet for some algebraic Hele-Shaw potential $Q$, and let
$$A(K)=\pi t_0.$$
The (backward) Hele-Shaw chain of $K$ with source at infinity is the family of compact sets
\begin{equation}\label{eq:KtSt}
K_t=S_t[Q_K], \qquad 0<t\le t_0.\end{equation}
Here $Q_K$ is the localization of $Q$ to $K$ and $S_t[Q_K]$ is the support of the corresponding equilibrium measure, see Section \ref{sec-log}. Clearly,
$K_{t_0}=K$, and we can also define $K_0=\emptyset$. Below we list some properties of the chain $\{K_t\}$; see \cite{HM2011} for further information.
\begin{enumerate}
\item[(a)]\label{itema} The sets $K_t$ are algebraic droplets. They are  local droplets of $Q$, so the quadrature function is the same for all $t$.   By Sakai's regularity theorem, all boundary points of each droplet $K_t$ are regular except for a finite number of cusps and double points.
\item[(b)]\label{itemb} For every continuous function $f:\C\to\R$ we have
\begin{equation}\label{eq:HS} \frac1\pi \frac d{dt}\int_{K_t} f~dA= \int f~d\omega_t^\infty,\end{equation}
where $\omega_t^\infty$ is the harmonic measure of $K_t$ evaluated at infinity. The derivative in \eqref{eq:HS} is two-sided for $t\in (0,t_0)$ and one-sided for $t=t_0$.
The chain $\{K_t\}$ is a unique solution of \eqref{eq:HS} satisfying $K_{t_0}=K$. In particular the chain of each droplet $K_t$ is a sub-chain of the chain of $K$.
\item[(c)]\label{itemc} The chain $\{K_t\}$ is monotone increasing, and $A(K_t)=\pi t.$ In fact we have the following {\it strong monotonicity} property:
$$t<t_0\quad \Rightarrow\quad {\mathcal P}(K_t)\subset {\rm int}~{\mathcal P}(K_{t_0}),$$
where ${\mathcal P}(\cdot)$ is the notation for the polynomial convex hull (the complement of the unbounded component of the complement).
\item[(d)]\label{itemd} Hele-Shaw chains are left-continuous, e.g.
$$K_{t_0}=\clos\bigcup_{t<t_0}K_t.$$
\item[(e)]\label{iteme} The droplets $K_t$ can be described in terms of the following {\em obstacle problem}:
\begin{equation}\label{Obst}
V_t(z)=\sup \Big\{v(z):~ v\in {\rm Sub}(\CC),~v\leq Q {\rm ~on~} K, ~ v(z)\le t\log|z^2|+O(1) {\rm ~as~} z\to\infty\Big\}.
\end{equation}
Denote by $K^*_t$ the coincidence sets:
$$K^*_t=\{Q=V_t\},\qquad 0\le t\le t_0.$$
(For example, $K_0^*$ is the global minimum set of $Q:K\to \R$.)
We have
\begin{itemize}
\item $K_t\subset K_t^*$, and $K_t$ is the support of the area measure restricted to $K_t^*$;
\item $\#( K_t^*\setminus K_t)<\infty$;
\item if $0\leq t<t_0$, then $ {\mathcal P}(K_t^*)=\bigcap_{\epsilon>0}{\mathcal P}(K_{t+\epsilon})$. 
\end{itemize}
\item[(f)]\label{itemf}
 The chain $\{K_t\}$ is discontinuous in the Hausdorff metric at the set of times $t$ such that ${\mathcal P}(K_t^*)\ne{\mathcal P}(K_t)$. As $t$ increases, new components of the droplet appear at those times. Some components could merge; this happens at another set of times when the droplet has douple points on the boundary.
 It should be true that in the algebraic situation the set of such ``singular'' times is finite but we were unable to find a proper reference.  The following statement ({\em Sakai's laminarity theorem}, see \cite{Sakai-book}) will be sufficient for our purposes.  The hydrodynamical term ``laminarity'' refers to the absence of topological changes.

{\em If $K_{t_0}$ has no double points, then there exists $\epsilon>0$ such that for all $t\in (t_0-\epsilon, t_0)$, the outer boundary of the droplet $K_t$ has no singular points and ${\mathcal P}(K_t)={\mathcal P}(K^*_t)$.}

In other words, the backward Hele-Shaw equation has a (unique) classical solution on $(t_0-\epsilon,t_0)$.
\end{enumerate}

\subsection{Hele-Shaw chains with a finite source}

For perturbations of bounded quadrature domains we will need Hele-Shaw dynamics with a source at a finite point.  Let $K$ be an algebraic droplet of area $\pi t_0$ for a Hele-Shaw potential $Q$, and suppose $0\notin K$. The (backward) Hele-Shaw chain of $K$ with source at zero is the family of compact sets
$$K_t=S_t\left[Q_K+(t_0-t)\log\frac1{|z^2|}\right], \qquad 0<t\le t_0.$$
Similarly to \eqref{eq:KtSt}, this is a unique solution of the (generalized) Hele-Shaw equation:
\begin{equation*}
\frac{1}{\pi} \frac{\dd}{\dd t} 1_{K_t} = \omega^0_t,\qquad K_{t_0}=K,
\end{equation*}
where $\omega^0_t$ is the harmonic measure of $K_t$ evaluated at the origin.

Note that unlike the case when the source is at infinity, the potential and the quadrature function of $K_t$ are now changing over time $t$.  However, the dependence on time is very simple, and all the facts listed in items (a)-(f) above extend (with obvious modifications) to the finite source case.

\subsection{Proof of Theorem \texorpdfstring{\hyperref[thmA]{A}}{thm A}}\label{sec-proofA}

We established the connectivity bounds of Theorem \hyperref[thmA]{A} for {\em non-singular} quadrature domains in Sections \ref{proof-UQD} and \ref{proof-BQD}.  To extend these bounds to quadrature domains with singular points on the boundary we will use a perturbative argument which is based on Hele-Shaw dynamics.

We start with the following a priori bound: $\exists C(d)<\infty$ such that if $\Omega$ is quadrature domain (which may be singular) of order $d$, then
\begin{equation}\label{eq:conn}\conn\Omega\leq C(d).
\end{equation}
Indeed, if $c=\conn\Omega$, then the Riemann surface ${\mathcal M}=\Omega^\text{double}$ (see Section \ref{sec-Sch-refl}) has genus
$$ g({\mathcal M})=c-1.$$
On the other hand, ${\mathcal M}$ is an algebraic curve of degree
$$D\leq 2(d+1),$$
see Remark in Section \ref{sec-Sch-refl}, so we have (see, e.g., \cite{Griffiths-Harris})
$$ g({\mathcal M})\le \frac{(D-1)(D-2)}2,$$
which gives us \eqref{eq:conn} with
$$ C(d) = 2d^2+d+1.$$
This estimate  is of course a special case of the more general Harnack curve theorem.

Let us now justify the connectivity bounds in the case of general {\em unbounded} quadrature domain $\Omega$.  Consider for instance the inequality \eqref{eq-thma1} in Theorem \hyperref[thmA]{A}:
$$\conn\Omega\leq C:=\min\{d+n-1,2d-2\},$$
where $d$ is the order of $\Omega$ and $n$ is the number of distinct nodes.  By \eqref{eq:conn} we can assume that $\Omega$ has the {\em maximal} connectivity among UQDs with given values of $d$ and $n$.   Denote $K=\Omega^c$ and consider the backward Hele-Shaw chain $\{K_t\}$ of $K\equiv K_{t_0}$ with source at infinity.  We claim that $K$ has no double points and therefore by Sakai's laminarity theorem (item (f) in Section \ref{sec-HSdetail}),
$$\Omega_t:= K_t^c$$
is a non-singular UQD of the same connectivity as $\Omega$ for all $t$ sufficiently close to $t_0$.  Since $\Omega_t$ also has the same order and the same nodes as $\Omega$, the estimate $\conn\Omega_t\leq C$ (established for non-singular domains in Section 3) extends to $t=t_0$.

To see that there are no double points we argue as follows.
If there are double points then the number of components of $\inte K$ is strictly greater than the connectivity of $\Omega$.  (Here we use the fact that $\Omega$ is connected so the components of $K$ are simply connected.)  By the left continuity of Hele-Shaw chains (item (d) in Section \ref{sec-HSdetail}), each scomponent of $\inte K$ intersects the droplets $K_t$ for $t$ sufficiently close to $t_0$.  On the other hand, by strong monotonicity (item (c) in Section \ref{sec-HSdetail}) we have
 $$K_t\subset {\rm int} {\mathcal P}(K)=\inte \Omega^c= \inte K,$$
and it follows that $K_t$ has at least as many components as $\inte K$ and therefore $\Omega_t$ has a higher connectivity than $\Omega$, which contradicts our assumption that $\Omega$ has the maximal connectivity.

The proof of the inequality \eqref{eq-thma2} for UQDs with a node at infinity is exactly the same because the quadrature function does not change under Hele-Shaw flow, so all quadrature domains $\Omega_t$ have a node at infinity.

We need to slightly modify the argument in the case of {\em bounded} quadrature domains (inequalitieis \eqref{eq-thma3} and \eqref{eq-thma4} in Theorem \ref{thm-A2}).
If $\Omega$ is a quadrature domain of maximal connectivity, then we define
$$K=\Omega^c\cap\clos B(0,R),$$
where $R$ is  large enough so that $\partial K\subset B(0,R)$. By Theorem \ref{QDtodroplet}, $K$ is an algebraic droplet with the same quadrature function as $\Omega$. Choosing one of the nodes of $\Omega$ as a finite source, we consider the corresponding backward Hele-Shaw chain $\{K_t\}$ of $K$. The quadrature function is changing but the poles and their multiplicities remain the same. We use the maximality of $\Omega$ to show  that $K$ has no double points and then we use the laminarity theorem to conclude that the droplets $K_t$ are non-singular if $t$ is sufficiently close to $t_0$. The bounded component $\Omega_t$ of $(K_t)^c$ is a non-singular quadrature domain, which has the same connectivity as $\Omega$.  It follows that the inequalities \eqref{eq-thma3} and \eqref{eq-thma4} extend to arbitrary bounded quadrature domains.

\subsection{Proof of Theorem \ref{thm-B} (topology of algebraic droplets)}\label{sec-proof-thmC}

We will first derive the bound \eqref{ovals} on the number of ovals of an algebraic droplet, and then we will show that this inequality is precisely a necessary and sufficient condition for the possible topology of a droplet of a given degree.

(i)  For a BQD of  order $d\ge 3$ and connectivity $c$, according to Theorem \hyperref[thmA]{A}  we have $c\le 2d-4$, i.e. $d\ge 2+c/2$, so
\begin{equation}\label{lowestd1}
c=1\;\Rightarrow\;d\ge1, \qquad c\ge2\;\Rightarrow\;d\ge3+\left[\frac {c-1}2\right].
\end{equation}
 For an UQD of  order $d\ge 2$ (i.e. $d+1\ge3$) we have $c\le 2(d+1)-4$, and
\begin{equation}\label{lowestd2}
c=1\;\Rightarrow\;d+1\ge1, \qquad c\ge2\;\Rightarrow\;d+1\ge3+\left[\frac {c-1}2\right].
\end{equation}
Let $K$ be a non-singular algebraic droplet of degree $d$.  By Theorem \ref{thm-HS-QD}, $K^c$  is a disjoint union of a single UQD and some BQDs. Let the orders of these quadrature domains be $d_\infty$ and $d_j$'s respectively, write
$$d:=d_\infty+\sum d_j,$$
and let $c_\infty$ and $c_j$'s denote the connectivities of the quadrature domains.
We have
\begin{align*}
d+1&=(d_\infty+1)+\sum d_j\geq q_1 + \sum_{k=2}^\infty\left( 3+\left\lfloor\frac {k-1}2\right\rfloor\right) q_k\\  &= q_1 + 3 q_2 + 4 (q_3 +q_4)+ 5(q_5+ q_6) + \cdots
\end{align*}
where $q_k$ is the number of quadrature domains of connectivity $k$, so $q=\sum q_k$. It follows that
\begin{align*}
d+1&\ge  -2q_1 +2q+ (q_1+q_2) + 2(q_3 +q_4)+ 3(q_5+ q_6) + \cdots\\
&=-2q_1 +2q+\frac12(q_1+2q_2+3q_3+\dots)+\frac12(q_1+q_3+q_5+\dots)\\
&=-2q_1 +2q+\frac12\#(\text{ovals})+\frac12 q_{\rm odd}.
\end{align*}
(Each oval is a boundary component of exactly one quadrature domain, so  $q_1+2q_2+3q_3+\dots$ is  the number of ovals.)
This proves the inequality
$$ \#(\text{ovals}) + q_\text{odd} + 4(q-q_1)\leq 2d+2.$$
Note that we have the case of equality if the degrees of all quadrature domains are equal to their lower bounds in terms of connectivities given in \eqref{lowestd1} and \eqref{lowestd2}.

(ii) To prove the second part of Theorem \ref{thm-B} we start with the following observation.

\bigskip
\begin{lemma}\label{lemma-maximal}
Given $c\geq 2$  there exists a non-singular unbounded quadrature domain $\Omega$ of order $2+\left[\frac {c-1}2\right]$ such that  ${\rm conn}(\Omega)=c$, and similarly there exists a non-singular bounded quadrature domain of order $3+\left[\frac {c-1}2\right]$ and  connectivity $c$.
\end{lemma}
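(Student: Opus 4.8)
The plan is to reduce the bounded case to the unbounded one and then construct the unbounded example explicitly as a Hele-Shaw perturbation of a circle packing. Since $3+\lfloor\frac{c-1}2\rfloor=\big(2+\lfloor\frac{c-1}2\rfloor\big)+1$, circular inversion (Remark (b) in Section \ref{sec-Sch-refl}) turns a UQD $\Omega$ of order $d:=2+\lfloor\frac{c-1}2\rfloor$ with $0\notin\clos\Omega$ into a BQD $\widetilde\Omega$ of order $d+1$. The map $z\mapsto 1/\overline z$ is a homeomorphism of $\widehat\CC$ which is a real-analytic diffeomorphism away from $0$ and $\infty$; arranging $0\in\inte(\Omega^c)$, neither point lies on $\partial\Omega$, so the inversion preserves the number of complementary components and carries real-analytic boundary arcs to real-analytic boundary arcs. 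Thus a non-singular UQD of order $d$ and connectivity $c$ yields a non-singular BQD of order $d+1$ and connectivity $c$, and it suffices to treat the unbounded case.

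For the UQD I would start from a circle packing as in the example of Section \ref{sec-HSflow}. Place $d=2+\lfloor\frac{c-1}2\rfloor$ disjoint open discs $B_1,\dots,B_d$ inside a large closed disc $\clos B$, with $0\in\inte\big(\clos B\setminus\bigcup_j B_j\big)$, choosing the mutual tangencies and the tangencies to $\partial B$ so that $K:=\clos B\setminus\bigcup_j B_j$ has interior with exactly $c$ components. This is possible because the maximal number of bounded complementary gaps of $d$ tangent discs equals $2d-2$ (by an elementary Euler-characteristic count, adding discs inductively), and $d=2+\lfloor\frac{c-1}2\rfloor$ gives $2d-2\ge c$; every intermediate value $2\le c\le 2d-2$ is realised by switching tangencies on or off. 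Each gap is a curvilinear polygon, hence simply connected. Since $K^c=B^{\mathrm{ext}}\sqcup\bigsqcup_j B_j$ is a disjoint union of quadrature domains of total order $0+d=d$, Theorem \ref{QDtodroplet} shows that $K$ is an algebraic droplet of degree $d$ with quadrature function $h=\overline a+\sum_j \rho_j^2/(z-a_j)$.

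Next I would run the backward Hele-Shaw chain $\{K_t\}$ of $K=K_{t_0}$ with source at infinity (Section \ref{sec-HSdetail}); the quadrature function, and hence the order $d$, is preserved along the chain. By strong monotonicity $K_t\subset\inte K$ for $t<t_0$, so $K_t$ avoids the tangency points, and by left-continuity, for $t$ sufficiently close to $t_0$ the droplet $K_t$ consists of exactly $c$ disjoint simply-connected pieces filling the $c$ gaps. Because the complement of finitely many disjoint topological discs in $\widehat\CC$ is connected, $\Omega_t:=(K_t)^c$ is a connected unbounded quadrature domain of order $d$ with $\conn\Omega_t=c$.

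The main obstacle is non-singularity, since $K_{t_0}=K$ has double points at the disc tangencies and Sakai's laminarity theorem (item (f) of Section \ref{sec-HSdetail}) does not apply at $t_0$. The remedy is to work slightly inside the chain: for $t<t_0$ close to $t_0$ the tangency double points have opened and, by left-continuity, no new double points are present, so there is a time $t_1<t_0$ at which $K_{t_1}$ has $c$ separated simply-connected components and no double points. For such $t_1$ the complement $(K_{t_1})^c$ is connected, whence $K_{t_1}$ is polynomially convex and $\partial\mathcal P(K_{t_1})=\partial K_{t_1}$. Applying the laminarity theorem with $t_1$ in place of $t_0$ produces $\epsilon>0$ such that $K_t$ is non-singular for $t\in(t_1-\epsilon,t_1)$ while still having $c$ components. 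Choosing any such $t$, the domain $\Omega_t=(K_t)^c$ is the desired non-singular UQD of order $d$ and connectivity $c$, which via the inversion of the first paragraph also settles the bounded case.
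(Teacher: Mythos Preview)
Your reduction of the bounded case to the unbounded one via circular inversion is clean and correct; the paper instead treats the two cases in parallel, using Hele-Shaw flow from a finite source for BQDs, so on that side your approach is a genuine simplification.

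The gap is in the odd-connectivity unbounded case. With $c=2k+1$ you take $d=k+2$ tangent discs, giving a quadrature function with $n=d$ simple finite poles. The relevant bound from Theorem~\ref{thm-A1} is then $\min(d+n-1,\,2d-2)=2k+2$, which exceeds $c$, so $c$ is \emph{not} maximal for this node configuration. Consequently the ``maximality $\Rightarrow$ no double points'' mechanism of Section~\ref{sec-proofA} is unavailable, and your replacement --- ``by left-continuity, no new double points are present'' --- is not a valid argument: left-continuity in the Hausdorff metric says nothing about singularities of $\partial K_t$, and nothing you have written rules out that $K_t$ has a double point (or even $2k+2$ components) for all $t$ in some interval below $t_0$. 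This is exactly why the paper uses a \emph{different} packing for odd $c$, namely a cardioid inside an ellipse together with $k-1$ discs. That configuration carries a node at infinity, so bound~\eqref{eq-thma2} applies and gives $d+n-2=(k+2)+(k+1)-2=2k+1=c$, restoring maximality and with it the no-double-points argument.

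A smaller issue: strong monotonicity only yields $K_t\subset\inte\mathcal P(K)=B$, not $K_t\subset\inte K$ as you write; the inner boundaries $\partial B_j$ can persist in $K_t$. What is actually needed (and what does follow from $K_t\subset B$) is that $K_t$ avoids the tangency points lying on the outer circle $\partial B$, so that the small discs become connected to the exterior in $(K_t)^c$.
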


\begin{proof} This lemma is essentially a statement about the sharpness of connectivity bounds of Theorem \hyperref[thmA]{A} in several special cases.  As we mentioned all bounds are sharp, see Theorems \ref{thm-B1} and \ref{thm-B2}, and while we prove these Theorems in full generality in a separate paper \cite{paper-sharp}, the special cases under consideration are quite elementary and could be derived here without any additional tools.

\begin{itemize}
\item
If $c$ is even, $c=2k$, then we claim there exists a non-singular unbounded quadrature domain $\Omega$ of connectivity $c$ with $k+1$ finite simple nodes (so $d_\Omega=k+1=2+[\frac{c-1}{2}]$).  This is the case $n=d$ in the first statement of Theorem \ref{thm-B1}.  We will use the construction described in Section \ref{sec-HSflow}.

Let us inscribe $k+1$ disjoint open discs in a big closed disc so that the interior of the complement has $2k$ components.  The case $k=1$ is shown in the first picture in Figure \ref{fig-proof}, for $k\geq 2$ we can use induction (``Apollonian packing'').  The complement (the shaded region in the picture) is an algebraic droplet; its quadrature function has $k+1$ simple finite poles, see \eqref{HS-QD} and \eqref{extcircle}.  Let $\{K_t\}_{0<t\leq t_0}$ be the Hele-Shaw chain (with source at infinity) of this droplet.  By strong monotonicity and left continuity of the Hele-Shaw flow, $K_t$ has at least $2k$ components if $t$ is close to $t_0$.  In fact, we have exactly $2k$ components, and there are no double points because $2k$ is the maximal connectivity of an unbounded quadrature domain with $d=n=k+1$, see \eqref{eq-thma1} in Theorem \ref{thm-A1} and the argument in Section \ref{sec-proofA}.  Thus we can apply Sakai's laminarity theorem to obtain a non-singular quadrature domain with the same quadrature function.

\item
If $c$ is odd, $c=2k+1$, then there is a non-singular unbounded quadrature domain of connectivity $c$ which has one finite double node, $k-1$ finite simple nodes, and a simple node at infinity. Note that $d_\Omega=k+2=2+\left[\frac {c-1}2\right]$.  This is the case $n=d-1$ in the second statement of Theorem \ref{thm-B1}.

The case $k=1$ is illustrated in the second picture in Figure \ref{fig-proof}: we inscribe a cardioid in an ellipse so that the interior of the complement $K$ (the shaded region) has 3 components. If $k\geq 2$ we also inscribe $k-1$ disjoint open disks in $K$ so that the interior of the complement has $2k+1$ components. It is easy to justify the existence of such ``Apollonian'' packing using convexity considerations.  Applying Hele-Shaw flow with source at infinity, we get an unbounded quadrature domain of connectivity $2k+1$.  It is important that according to \eqref{eq-thma2} in Theorem \ref{thm-A1}, $2k+1$ is the maximal connectivity for unbounded quadrature domains with a node at infinity such that $d=k+2$ and $n=k+1$.  There are no double points by the argument in Section \ref{sec-proofA}, and we get a non-singular quadrature domain with the same quadrature function by the Hele-Shaw flow.

\item If $c=2k$, then there exists a bounded quadrature function $\Omega$ of connectivity $c$ with $k+2$ simple nodes. The proof is exactly the same as in the case of unbounded domains except that we use the Hele-Shaw flow with a {\em finite} source.  The setup for Apollonian packing  ($k=1$) is shown in the 3rd picture in Figure \ref{fig-proof}.  The maximality of the connectivity follows from \eqref{eq-thma4} in Theorem \ref{thm-A2}.

\item If $c=2k+1$, then there exists a bounded quadrature function $\Omega$ of connectivity $c$ with two double points and $k-1$ simple nodes.  The case $k=1$ is shown in the 4th picture in Figure \ref{fig-proof}.
\end{itemize}
\vspace{-0.92cm}
\end{proof}

\begin{figure}[ht]
\includegraphics[width=\textwidth]{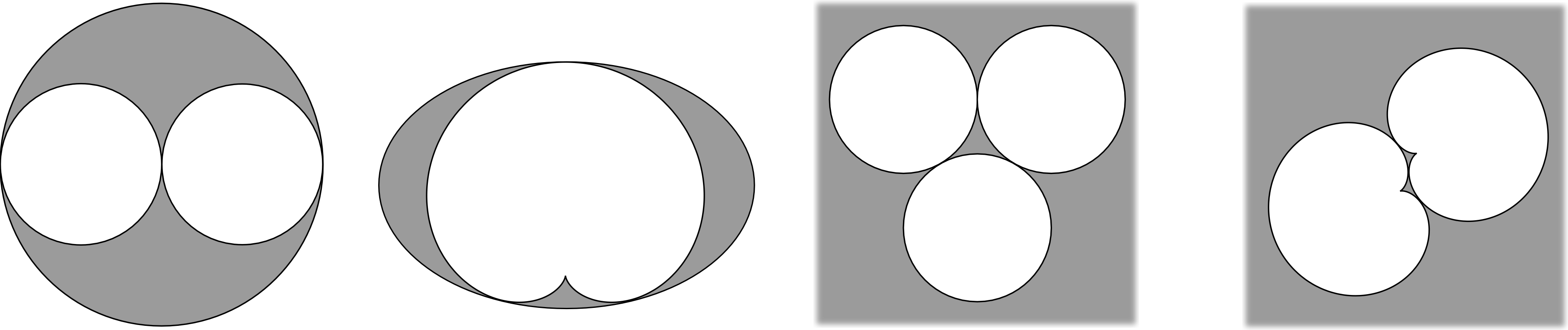}\caption{\label{fig-proof} Initial setups for Apollonian packing in Lemma \ref{lemma-maximal}}
\end{figure}

Let us now finish the proof of the theorem.
Given $d$ and some configuration of ovals satisfying \eqref{ovals}, we want to construct a non-singular algebraic droplet $K$ of degree $d$ such that $\partial K$ is topologically equivalent to the given configuration.  Let $\widetilde K$ denote the compact set bounded by the given ovals.  We can describe its topology by an (oriented) rooted tree as follows.  The vertices of the tree are complementary domains $U$ of $\widetilde K$ with the unbounded domain being the root.  The edges are the pairs $[U_1,U_2]$ such that $U_2$ sits inside some bounded component of $U_1^c$ and there is a curve in $\widetilde K$ connecting $\partial U_1$ and $\partial U_2$.  Using the induction with respect to the graph distance from the root, we can construct a family $\{\Omega\}$ of disjoint non-singular quadrature domains 
such that the corresponding oval configuration is equivalent to $\partial \tilde K$, the orders of non-simply connected $\Omega$'s are related to their connectivities as in Lemma \ref{lemma-maximal} and the simply connected $\Omega$'s are round disks.

\begin{figure}\begin{center}
\includegraphics[width=0.6\textwidth]{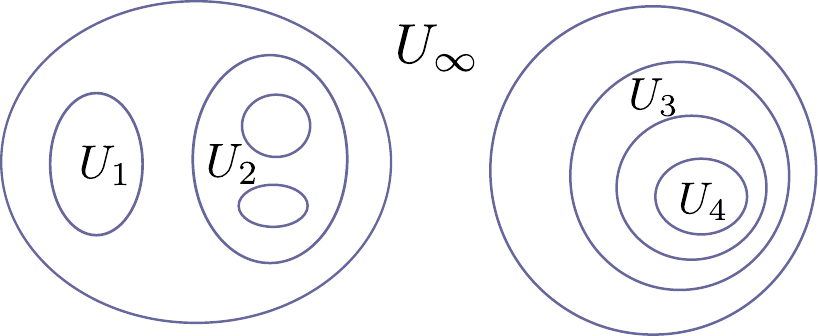}
\caption{\label{fig:induction} Example in the proof of Theorem \ref{thm-B}}
\end{center}
\end{figure}
We will use the following example to explain the construction. Consider the configuration of 9 ovals in Figure \ref{fig:induction}.  Let $U_\infty$ denote the unbounded component of $\widehat\CC\setminus \widetilde K$, and let $U_1, \dots, U_4$ be the bounded components as shown in the picture.  We first find UQD $\Omega_\infty$ such that $\conn\Omega_\infty=\conn U_\infty=2$ and $\text{order}\,\Omega_\infty=2$  (as in Lemma \ref{lemma-maximal}). At the next step we deal with components $U_1, U_2, U_3$ which are at distance 1 from the root. Inside one of the component of $\Omega_\infty^c$ we find disjoint BQDs $\Omega_1$ and $\Omega_2$ such that  $\Omega_1$ is a round disc, $\Omega_2$ is triply connected and has order 4 (again as in Lemma \ref{lemma-maximal}). We  also choose a doubly connected QD $\Omega_3$ of order 3 inside the second component of $\Omega_\infty^c$. Finally, we deal with $U_4$ which is at distance 2 from the root.  Inside the bounded component of $\Omega_3^c$ we choose a disk $\Omega_4$.

As was mentioned earlier, see the end of part (i) of the proof,  we have
\begin{equation*}
\#(\text{ovals}) + q_\text{odd}+ 4(q-q_1)  = 2d'-2,
\end{equation*}
where $d'\leq d$ is the degree of the droplet $K'=\CC\setminus\bigcup\Omega$.  If $d'=d$ then we set $K=K'$ but if $d'<d$ then we define $K$ as a droplet in the Hele-Shaw chain of $K'$ with $d'-d$ new finite sources.  This completes the proof of Theorem \ref{thm-B}.


\subsection{Proof of Theorem \ref{thm-theyare} (inverse moment problem)}\label{sec-proof-unique}

Let $h$ be a quadratic polynomial, and let $K\neq \widetilde K$ be two algebraic droplets with quadrature function $h$ and area $\pi T$.  By Corollary \ref{cor1} and  Theorem \ref{thm-HS-QD}, the droplets are connected; in particular, $K$ and $\widetilde K$ are local droplets of the Hele-Shaw potential
$$ Q(z)=|z|^2 + \Re\int_0^z f(\zeta)\,\dd \zeta.$$
We will consider the backward Hele-Shaw chain $K_t$ and $\widetilde K_t$, ($0<t\leq T$), as well as the coincidence sets $K^*_t$ and $\widetilde K^*_t$,  ($0<t\leq T$), see Section \ref{sec-HSdetail}.  The connectedness of the droplets implies
\begin{equation*}
K_t=K^*_t,\quad \widetilde K_t=\widetilde K^*_t, \quad  (0<t\leq T).
\end{equation*}
To see this, we use the facts mentioned in Section \ref{sec-HSdetail}, item (e).
The equalities are obvious for $t=T$. If $t<T$ and $K_t\neq K^*_t$, then $K^*_t$ is $K_t$ plus several isolated points, so we can find disjoint neighborhoods $U'$ of $K_t$ and $U''$ of $K^*_t\setminus K_t$.  Since $U:=U'\sqcup U''$ is a neighborhood of $K^*_t$, we have $K_{t+\epsilon}\subset U$ for all small $\epsilon>0$.  This is impossible because both $U'$ and $U''$ contain points of $K_{t+\epsilon}$ but $K_{t+\epsilon}$ is connected.

We next observe that
\begin{equation}\label{K=K}
	K^*_0=\widetilde K^*_0.
\end{equation}
This is because both sets are non-empty and all points in $K^*_0\cup \widetilde K^*_0$ are non-repelling fixed points of the map $z\mapsto \overline{h(z)}$.  Indeed, if $z_0\in K^*_0\cup \widetilde K^*_0$ then $z_0$ is a local minimum of $Q$ so
\begin{equation*}
	\partial Q(z_0)=\overline{z_0}-h(z_0)=0,
\end{equation*}
and
\begin{equation*}
	1-|h'(z_0)|^2=(\overline\partial\partial Q)(z_0) - |\partial^2 Q(z_0)|^2=\frac{1}{4}
	\begin{vmatrix}Q_{xx} & Q_{xy} \\ Q_{xy} & Q_{yy}
	\end{vmatrix}_{z=z_0}\geq 0
\end{equation*}
There could be only one such non-repelling fixed point for a given quadratic polynomial $h$, see \cite{Kha1} for the result concerning general polynomials, so we have \eqref{K=K}.

Let us now define
\begin{equation*}
\tau=\sup\{t:K^*_t=\widetilde K^*_t,~t\leq T\}.
\end{equation*}
The supremum is in fact the maximum, $K^*_\tau=\widetilde K^*_\tau$; for $\tau>0$ this follows from the left continuity of the Hele-Shaw chains, see item (d) in Section \ref{sec-HSdetail}.  To prove the theorem we need to show $\tau=T$.  We will use the following simple fact concerning local droplets, see Lemma 5.2 in \cite{HM2011}:
\begin{equation}\label{fromHM}
\text{\em if $\Sigma_1\subset\Sigma_2$ are two compact sets of positive area and if $S_t[Q_{\Sigma_2}]\subset \Sigma_1$, then $S_t[Q_{\Sigma_1}]=S_t[Q_{\Sigma_2}]$.}
\end{equation}
Suppose $\tau<T$. We choose $\Sigma_1=K_{\tau+\epsilon}$ for sufficiently small $\epsilon>0$ and $\Sigma_2=\widetilde K$.  By item (c) in Section \ref{sec-HSdetail} we have $K_{\tau +\epsilon}\subset \widetilde K$ for a small $\epsilon$ because
\begin{equation*}
K^*_\tau=\widetilde K^*_\tau\subset \inte \widetilde K.
\end{equation*}
By the same argument, if $t>\tau$ is sufficiently close to $\tau$, then we also have
\begin{equation*}
S_t[Q_{\widetilde K}]\equiv \widetilde K_t\subset K_{\tau+\epsilon}
\end{equation*}
because $\widetilde K^*_\tau=K^*_\tau\subset K_{\tau+\epsilon}$.   Applying \eqref{fromHM} we get
\begin{equation*}
K_t=S_t[Q_{K_{\tau+\epsilon}}]=S_t[Q_{\widetilde K}]=\widetilde K_t.
\end{equation*}
It follows that $\tau$ is not a supremum, a contradiction.

\bibliographystyle{alpha}

\end{document}